\newtheorem{theorem}{Theorem}[section]
\newtheorem{proposition}[theorem]{Proposition}
\newcommand{\change}[1]{#1}
\newcommand{\trace}{{\rm Trace}}
\newcommand{\rc}{\nabla}
\newcommand{\D}{\mathrm{D}}
\newcommand{\PD}[1]{S_{++}({#1})}
\newcommand{\Stiefel}[2]{{\mathrm{St}({#1},{#2})}}
\newcommand{\symmetric}[1]{S_{sym}({#1})}
\newcommand{\skewSymmetric}[1]{S_{skew}({#1})}
\newcommand{\OG}[1]{{\mathcal{O}({#1})}}
\newcommand{\Sym}{{\mathrm{Sym}}}
\newcommand{\Skew}{{\mathrm{Skew}}}
\newcommand{\argmin}{\operatornamewithlimits{arg\,min}}
\newcommand{\grad}{\mathrm{grad}}
\newcommand{\hess}{\mathrm{Hess}}
\newcommand{\Grad}{\mathrm{Grad}}
\newcommand{\mat}[1]{{\bf #1}}
\newcommand{\subject}{\mathrm{subject\  to}}
\newcommand*\samethanks[1][\value{footnote}]{\footnotemark[#1]}
\title{Low-rank optimization with trace norm penalty\thanks{This paper presents research results of the Belgian Network DYSCO (Dynamical Systems, Control, and Optimization), funded by the Interuniversity Attraction Poles Programme, initiated by the Belgian State, Science Policy Office. The scientific responsibility rests with its authors.}}
\author{B.~Mishra\thanks{Department of Electrical Engineering and Computer Science, University of Li\`ege, 4000 Li\`ege,
Belgium (B.Mishra@ulg.ac.be, G.Meyer@ulg.ac.be, R.Sepulchre@ulg.ac.be). Bamdev Mishra is a research fellow of the Belgian National Fund for Scientific Research (FNRS).}
        \and G.~Meyer\samethanks[2]
        \and F.~Bach\thanks{INRIA - Sierra Project-Team Ecole Normale Sup\'erieure Paris, France (Francis.Bach@inria.fr)}
        \and R.~Sepulchre\samethanks[2]
        }
\begin{document}

\maketitle

\begin{abstract}
The paper addresses the problem of low-rank trace norm minimization. We propose an algorithm that alternates between fixed-rank optimization and rank-one updates. The fixed-rank optimization is characterized by an efficient factorization that makes the trace norm differentiable in the search space and the computation of duality gap numerically tractable. The search space is nonlinear but is equipped with a particular Riemannian structure that leads to efficient computations. We present a second-order trust-region algorithm with a guaranteed quadratic rate of convergence. Overall, the proposed optimization scheme converges super-linearly to the global solution while maintaining complexity that is linear in the number of rows \change{and columns} of the matrix. To compute a set of solutions efficiently for a grid of regularization parameters we propose a predictor-corrector approach that outperforms the naive warm-restart approach on the \change{fixed-rank} quotient manifold. The performance of the proposed algorithm is illustrated on problems of low-rank matrix completion and multivariate linear regression.
\end{abstract}


\pagestyle{myheadings}
\thispagestyle{plain}
\markboth{B. MISHRA, G. MEYER, F. BACH AND R. SEPULCHRE}{LOW-RANK OPTIMIZATION WITH TRACE NORM PENALTY}

\section{Introduction}
The present paper focuses on the convex program
\begin{equation}\label{eq:general_formulation}
\begin{array}{l c}
\min\limits_{\mat{X} \in \mathbb{R}^{n \times m}} \quad f(\mat{X})  + \lambda \| \mat{X} \|_*\\
\end{array}
\end{equation}
where $f$ is a smooth convex function, $\| \mat{X} \|_*$ is the \emph{trace norm} (also known as nuclear norm) which is the sum of the singular values of $\mat{X}$ \cite{fazel02a, recht10a, cai10a} and $\lambda > 0$ is the regularization parameter. Programs of this type have attracted much attention in the recent years as efficient convex relaxations of intractable rank minimization problems \cite{fazel02a}. The rank of the optimal solution $\mat{X}^*(\lambda)$ of (\ref{eq:general_formulation}) decreases to zero as the regularization parameter grows unbounded \cite{bach08a}. As a consequence, generating efficiently the regularization path $\{ \mat{X}^*(\lambda_i)\}_{i = 1,...,N} $, for a whole range of values of $\lambda_i$ minimizers, is a convenient proxy to obtain suboptimal low-rank \change{minimizers} of $f$.

Motivated by machine learning and statistical large-scale regression problems \cite{recht10a, yuan07a, vounou10a}, we are interested  in very low-rank solutions ($p < 10^2$) of very high-dimensional problems ($n > 10^6$). To this end, we propose an algorithm that guarantees second-order convergence to the solutions of (\ref{eq:general_formulation}) while ensuring a tight control (linear in $n$) on the data storage requirements and on the numerical complexity of each iteration.

The proposed algorithm is based on a low-rank factorization of the unknown matrix, similar to the singular value decomposition (SVD), $\mat{X} = \mat{U} \mat{B} \mat{V}^T$. Like in SVD, $\mat{U} \in \mathbb{R}^{n \times p}$ and $\mat{V} \in \mathbb{R}^{m\times p}$ are orthonormal matrices that span row and column spaces of $\mat{X}$. In contrast, the $p\times p$ scaling factor $\mat{B} = \mat{B}^T \succ 0$ is allowed to be non-diagonal which makes the factorization non-unique.

Our algorithm alternates between fixed-rank optimization and rank-one updates. When the rank is fixed, the problem is no longer convex but the search space has nevertheless a Riemannian structure. We use the framework of manifold optimization to devise a trust-region algorithm that generates low-cost (linear in $n$) iterates that converge super-linearly to a local minimum. Local minima are escaped by incrementing the rank until the global minimum in reached. The rank-one update is always selected to ensure a decrease of the cost. 

Implementing the complete algorithm for a fixed value of the regularization parameter $\lambda$ leads to a monotone convergence to the global minimum through a sequence of local minima of increasing ranks. Instead, we also modify $\lambda$ along the way with a predictor-corrector method thereby transforming most local minima of (\ref{eq:general_formulation}) (for fixed $\lambda$ and fixed rank) into global minima of (\ref{eq:general_formulation}) for different values of $\lambda$. The resulting procedure, thus, provides a full regularization path at a very efficient numerical cost.

Not surprisingly, the proposed approach has links with several earlier contributions in the literature. Primarily, the idea of interlacing fixed-rank optimization with rank-one updates has been used in semidefinite programming \cite{burer03a, journee10a}. It is here extended to a non-symmetric framework using the Riemannian geometry recently developed in \cite{bonnabel09a, meyer11b, meyer11a}. An improvement with respect to the earlier work \cite{burer03a, journee10a} is the use of duality gap certificate to discriminate between local and global minima and its efficient computation thanks to the chosen parameterization.

Schemes that combine \change{fixed-rank} optimization and special rank-one updates have appeared recently in the particular context of matrix completion \cite{keshavan09b, wen12a}. The framework presented here is in the same spirit but in a more general setting and with a global convergence analysis. Most other fixed-rank algorithms \cite{srebro03a, keshavan09b, meka09a, simonsson10a, wen12a, meyer11b, boumal11b, vandereycken13a} for matrix completion are first-order schemes. It is more difficult to provide a tight comparison of the proposed algorithm to trace norm minimization algorithms that do not fix the rank a priori \cite{cai10a, mazumder10a, yuan07a, amit07a}. It should be emphasized, however, that most trace norm minimization algorithms use singular value thresholding operation at each iteration. This is the most numerically demanding step for these algorithms. For the matrix completion application, it involves computing (potentially all) the singular values of a \emph{low-rank} + \emph{sparse} matrix \cite{cai10a}.  \change{In contrast, the proposed approach requires only dense linear algebra (linear in $n$) and rank-one updates using only dominant singular vectors and value of a sparse matrix. The main potential of the algorithm appears when computing the solution not for a single parameter $\lambda$ but for a number of values of $\lambda$. We compute the entire regularization path with an efficient predictor-corrector strategy that convincingly outperforms the warm-restart strategy.} 

For the sake of illustration and empirical comparison with state-of-the-art algorithms we consider two particular applications, low-rank matrix completion \cite{candes09a} and multivariate linear regression \cite{yuan07a}. In both cases, we obtain iterative algorithms with a numerical complexity that is linear in the number of observations and with favorable convergence and precision properties.

 \section{Relationship between convex program and non-convex formulation}
Among the different \change{factorizations} that exist to represent low-rank matrices, we use the \change{factorization} \cite{meyer11a, bonnabel09a} that decomposes a rank-$p$ matrix $\mat{X} \in \mathbb{R}^{n \times m}$ into
\begin{equation*}
 \mat{X} = \mat{U} \mat{B} \mat{V}^T
 \end{equation*}
where $\mat{U} \in \Stiefel{p}{n}$, $\mat{V} \in \Stiefel{p}{m}$ and $\mat{B} \in \PD{p}$. $\Stiefel{p}{n}$ is the Stiefel manifold or the set of $n\times p$ matrices with orthonormal columns. $\PD{p}$ is the cone of $p\times p$ positive definite matrices. We stress that the scaling $\mat{B} = \mat{B}^T \succ 0$ is not required to be diagonal. The redundancy of this parameterization has non-trivial algorithmic implications (see Section \ref{sec:manifold_optimization}) but we believe that it is key to success of the approach. See \cite{keshavan09b,meyer11a} for earlier algorithms advocating matrix scaling and Section \ref{sec:polar_vs_svd} for a numerical illustration. With the use of \change{factorization} $\mat{X} = \mat{UBV}^T$, the trace norm is written as $\| \mat{X}\|_* = \trace(\mat{B})$ which makes it \change{differentiable}. \change{For a fixed rank $p$, the optimization problem (\ref{eq:general_formulation}) is recast as}
\begin{equation}\label{eq:factorized_formulation}
\begin{array}{ll}
\min\limits_{\mat{U},\mat{B}, \mat{V}} & f(\mat{UBV}^T) + \lambda \trace(\mat{B}) \\
\subject & \mat{U} \in \Stiefel{p}{n}, \quad \mat{B} \in \PD{p} \quad {\rm and} \quad \mat{V} \in \Stiefel{p}{m}.\\
\end{array}
\end{equation}
The search space of (\ref{eq:factorized_formulation}) is not Euclidean but the product space of two well-studied manifolds, namely, the Stiefel manifold \cite{edelman98a} and the cone of positive definite matrices \cite{smith05a}. This provides a proper geometric framework to perform optimization. From the geometric point of view, the column and row spaces of $\mat{X}$ are represented on the Stiefel manifold whereas the scaling factor is absorbed into the positive definite part. A proper metric on the space takes into account both rotational and scaling invariance.

\subsection{First-order optimality conditions}
In order to relate the fixed-rank problem (\ref{eq:factorized_formulation}) to the convex optimization problem (\ref{eq:general_formulation}) we look at the necessary and sufficient optimality conditions that govern the solutions. The first-order necessary and sufficient optimality condition for the convex program (\ref{eq:general_formulation}) is 
 \begin{equation}\label{eq:foc_general_formulation}
 \mat{0} \in \Grad_\mat{X} f(\mat{X}) + \lambda \partial \| \mat{X} \|_*
 \end{equation}
 where $\Grad_{\mat{X}} f$ is the Euclidean gradient of $f$ in $\mathbb{R}^{n \times m}$ at $\mat{X}$ and $\partial \| \mat{X} \|_*$ is the sub-differential of the trace norm (optimality conditions for trace norm are in \cite{bach08a, recht10a}). 
\begin{proposition}\label{prop:foc_factorized_formulation}
\change{The first-order necessary optimality conditions of (\ref{eq:factorized_formulation})}
 \begin{equation}\label{eq:foc_factorized_formulation}
 \begin{array}{lll}
\mat{S}\mat{VB} - \mat{U}\Sym(\mat{U}^T \mat{SVB})&= & \mat{0}\\
\Sym( \mat{U}^T \mat{S} \mat{V}   + \lambda \mat{I} )  & = &\mat{0}\\
\mat{S}^T \mat{UB} -\mat{V} \Sym (\mat{V}^T \mat{S}^T \mat{UB} )&= &\mat{0} 
 \end{array}
 \end{equation}
where \change{$\mat{X} = \mat{UBV}^T$}, $\Sym(\mat{\Delta}) = \frac{\mat{\Delta} + \mat{\Delta}^T}{2}$ for any square matrix $\mat{\Delta}$ and $\mat{S}=\Grad_\mat{X} f(\mat{UBV}^T) $. $\mat{S}$ is referred to as \emph{dual variable} throughout the paper.
\end{proposition}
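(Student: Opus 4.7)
The plan is to treat (\ref{eq:factorized_formulation}) as a smooth optimization problem on the product manifold $\Stiefel{p}{n} \times \PD{p} \times \Stiefel{p}{m}$. Since $\PD{p}$ is an open subset of $\symmetric{p}$, it imposes no boundary condition at an interior minimizer, so only the two Stiefel factors carry genuine manifold constraints. The first-order necessary condition is that the Riemannian gradient of the cost vanishes in each of the three factors, and this can be obtained by computing the Euclidean partial derivatives and then projecting them onto the tangent spaces of the respective manifolds under the standard embedded (Frobenius) metric.

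First I would compute the Euclidean partial derivatives of $F(\mat{U},\mat{B},\mat{V}) := f(\mat{U}\mat{B}\mat{V}^T) + \lambda \trace(\mat{B})$ by the chain rule. With $\mat{S} = \Grad_\mat{X} f(\mat{UBV}^T)$, these are
\begin{equation*}
\partial_\mat{U} F = \mat{S}\mat{V}\mat{B},\qquad \partial_\mat{B} F = \mat{U}^T \mat{S}\mat{V} + \lambda \mat{I},\qquad \partial_\mat{V} F = \mat{S}^T \mat{U}\mat{B}.
\end{equation*}
Next I would recall the tangent space descriptions: $T_\mat{U}\Stiefel{p}{n} = \{\mat{Z} : \mat{U}^T\mat{Z} + \mat{Z}^T\mat{U} = 0\}$, and $T_\mat{B}\PD{p} = \symmetric{p}$. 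The orthogonal projector onto $T_\mat{U}\Stiefel{p}{n}$ with respect to the Frobenius inner product is the well-known map $\mat{Z} \mapsto \mat{Z} - \mat{U}\Sym(\mat{U}^T \mat{Z})$ (obtained by splitting $\mat{Z}$ into its $\mat{U}$-range and orthogonal parts and keeping only the skew component of $\mat{U}^T \mat{Z}$), and the projector onto $\symmetric{p}$ is simply $\Sym$.

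Applying these projectors to the three Euclidean derivatives yields the three equations of (\ref{eq:foc_factorized_formulation}): the $\mat{U}$-equation becomes $\mat{S}\mat{V}\mat{B} - \mat{U}\Sym(\mat{U}^T\mat{S}\mat{V}\mat{B}) = \mat{0}$, the $\mat{B}$-equation becomes $\Sym(\mat{U}^T\mat{S}\mat{V} + \lambda \mat{I}) = \mat{0}$, and the $\mat{V}$-equation becomes $\mat{S}^T\mat{U}\mat{B} - \mat{V}\Sym(\mat{V}^T\mat{S}^T\mat{U}\mat{B}) = \mat{0}$. Since the cost $F$ is smooth on the product manifold and the three factors are independent, stationarity is equivalent to each of these Riemannian gradients vanishing simultaneously, giving exactly the stated conditions.

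The calculations are routine; the only subtlety worth flagging is that the Euclidean derivative with respect to $\mat{B}$ must be symmetrized because $\mat{B}$ is constrained to be symmetric, and it is this symmetrization that turns the seemingly over-determined matrix equation $\mat{U}^T\mat{S}\mat{V} + \lambda \mat{I} = \mat{0}$ into the correct projected condition $\Sym(\mat{U}^T\mat{S}\mat{V} + \lambda \mat{I}) = \mat{0}$. One should also remark that we do not invoke any specific non-Euclidean metric on $\Stiefel{p}{n}$ or $\PD{p}$ here: first-order stationarity depends only on the tangent spaces, not on the metric, so the projector formulas above are sufficient regardless of the Riemannian metric chosen later in Section~\ref{sec:manifold_optimization}.
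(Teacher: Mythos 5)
Your proof is correct and follows essentially the same route as the paper's: compute the Euclidean partial derivatives $(\mat{SVB},\ \mat{U}^T\mat{SV}+\lambda\mat{I},\ \mat{S}^T\mat{UB})$ and project them onto the tangent spaces of the three factors. The only cosmetic difference is that the paper computes the Riemannian gradient with respect to its chosen metric (\ref{eq:metric}), which turns the $\mat{B}$-component into $\mat{B}\left(\Sym(\mat{U}^T\mat{SV})+\lambda\mat{I}\right)\mat{B}$ and then uses invertibility of $\mat{B}$ to recover the same condition, whereas your observation that first-order stationarity is metric-independent short-circuits that step.
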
 
\begin{proof}
The first-order optimality conditions are derived either by writing the Lagrangian of the problem (\ref{eq:factorized_formulation}) and looking at the \emph{KKT conditions} or by deriving the gradient of the function on the structured space $\Stiefel{p}{n }\times \PD{p} \times \Stiefel{p}{m }$ using the metric (\ref{eq:metric}) defined in Section \ref{sec:manifold_optimization}. \change{The proof is given in Appendix \ref{appendix:foc}.}
\end{proof}

 \begin{proposition}\label{prop:convergence}
 A local minimum of (\ref{eq:factorized_formulation}) $\mat{X} = \mat{U} \mat{B} \mat{V}^T$ is also the global optimum of (\ref{eq:general_formulation}) iff $ \| \mat{S} \|_{op} = \lambda$ where $\mat{S}=\Grad_\mat{X} f(\mat{UBV}^T) $ and $\| \mat{S}\|_{op}$ is the operator norm, i.e., the dominant singular value of $\mat{S}$. Moreover, $\| \mat{S} \|_{op} \geq \lambda$ and equality holds only at optimality.
 \end{proposition}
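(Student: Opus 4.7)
The plan is to extract stronger consequences from the fixed-rank first-order conditions (\ref{eq:foc_factorized_formulation}) and then match them against the convex sub-differential condition (\ref{eq:foc_general_formulation}). The heart of the argument will be algebraic manipulation of the three equations, followed by an orthogonal decomposition of $\mat{S}$ in the basis adapted to the column and row spaces of $\mat{X}$.

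First I would sharpen the fixed-rank conditions. The first line of (\ref{eq:foc_factorized_formulation}) states both that $\mat{SVB}$ lies in the column space of $\mat{U}$ and that $\mat{U}^T\mat{SVB}$ coincides with its symmetric part. Since $\mat{B} \succ 0$ is invertible, the first fact reduces to $(\mat{I} - \mat{UU}^T)\mat{SV} = \mat{0}$, and with $\mat{A} := \mat{U}^T\mat{SV}$ the second fact reads ``$\mat{AB}$ is symmetric.'' The third line gives the mirror statements $(\mat{I} - \mat{VV}^T)\mat{S}^T\mat{U} = \mat{0}$ and ``$\mat{A}^T\mat{B}$ is symmetric.'' The middle line forces $\mat{A} + \mat{A}^T = -2\lambda \mat{I}$, so writing $\mat{A} = -\lambda\mat{I} + \mat{K}$ with $\mat{K} := \Skew(\mat{A})$ skew-symmetric, symmetry of $\mat{AB}$ collapses to $\mat{KB} + \mat{BK} = \mat{0}$. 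Diagonalizing $\mat{B} = \mat{QDQ}^T$ with $\mat{D} \succ 0$ and inspecting $\tilde{\mat{K}} := \mat{Q}^T\mat{KQ}$ entrywise gives $\tilde{\mat{K}}_{ij}(\mat{D}_{ii} + \mat{D}_{jj}) = 0$, hence $\mat{K} = \mat{0}$. Combining with the projection identities yields the sharper system
\[
\mat{U}^T\mat{SV} = -\lambda\mat{I}, \qquad \mat{SV} = -\lambda\mat{U}, \qquad \mat{U}^T\mat{S} = -\lambda \mat{V}^T.
\]

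Next I would decompose $\mat{S}$ using the orthogonal split of $\mathbb{R}^n$ via $[\mat{U},\,\mat{U}_\perp]$ and of $\mathbb{R}^m$ via $[\mat{V},\,\mat{V}_\perp]$. The three identities above kill every cross block, leaving $\mat{S} = -\lambda \mat{UV}^T + \mat{U}_\perp(\mat{U}_\perp^T\mat{SV}_\perp)\mat{V}_\perp^T$. The two summands have orthogonal column and row supports, so
\[
\|\mat{S}\|_{op} = \max\bigl(\lambda,\ \|\mat{U}_\perp^T\mat{SV}_\perp\|_{op}\bigr) \ge \lambda,
\]
which already establishes the ``moreover'' clause.

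Finally I would identify the equality case with global optimality of (\ref{eq:general_formulation}). Diagonalizing $\mat{B} = \mat{QDQ}^T$ shows that the compact SVD of $\mat{X} = \mat{UBV}^T$ has factors $\mat{U}_{\mat X} = \mat{UQ}$ and $\mat{V}_{\mat X} = \mat{VQ}$, and hence $\mat{U}_{\mat X}\mat{V}_{\mat X}^T = \mat{UV}^T$. Setting $\mat{W} := -\mat{S}/\lambda - \mat{UV}^T$, the identities $\mat{SV} = -\lambda\mat{U}$ and $\mat{U}^T\mat{S} = -\lambda\mat{V}^T$ give $\mat{U}^T\mat{W} = \mat{0}$ and $\mat{WV} = \mat{0}$, which are exactly the orthogonality conditions appearing in the sub-differential of $\|\cdot\|_*$ at $\mat{X}$. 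The decomposition above further gives $\mat{W} = -\mat{U}_\perp(\mat{U}_\perp^T\mat{SV}_\perp)\mat{V}_\perp^T/\lambda$, so $\|\mat{W}\|_{op} \le 1$ if and only if $\|\mat{U}_\perp^T\mat{SV}_\perp\|_{op} \le \lambda$, if and only if $\|\mat{S}\|_{op} = \lambda$. In that case $-\mat{S}/\lambda = \mat{U}_{\mat X}\mat{V}_{\mat X}^T + \mat{W} \in \partial\|\mat{X}\|_*$, so the convex optimality condition (\ref{eq:foc_general_formulation}) holds and $\mat{X}$ is a global minimizer. The step I expect to be the main obstacle is the skew-commutation argument: the non-diagonal scaling $\mat{B}$ a priori permits a nonzero skew part $\mat{K}$ in $\mat{A}$, and it is positive definiteness of $\mat{B}$ (not just invertibility) that is essential to rule this out.
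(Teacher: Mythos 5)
Your proposal is correct, and it follows the same overall strategy as the paper's proof (match the stationarity conditions \eqref{eq:foc_factorized_formulation} against the trace-norm sub-differential \eqref{eq:subdifferential_trace_norm}), but it executes that strategy more completely. The paper extracts only the middle equation $\Sym(\mat{U}^T\mat{SV})=-\lambda\mat{I}$ to get the chain $\lambda=\|\Sym(\mat{U}^T\mat{SV})\|_{op}\le\|\mat{U}^T\mat{SV}\|_{op}\le\|\mat{S}\|_{op}$, and then \emph{asserts} that equality holds exactly when $\mat{S}=-\lambda\mat{UV}^T+\mat{U}_\perp\mat{\Sigma}\mat{V}_\perp^T$; justifying that assertion in fact requires knowing that the skew part of $\mat{U}^T\mat{SV}$ vanishes and that the off-diagonal blocks $\mat{U}_\perp^T\mat{SV}$ and $\mat{U}^T\mat{SV}_\perp$ vanish, which the paper never derives. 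You supply precisely these missing pieces: the first and third equations of \eqref{eq:foc_factorized_formulation} give the range conditions $(\mat{I}-\mat{UU}^T)\mat{SV}=\mat{0}$ and $(\mat{I}-\mat{VV}^T)\mat{S}^T\mat{U}=\mat{0}$ (using invertibility of $\mat{B}$), and your skew-commutation lemma ($\mat{KB}+\mat{BK}=\mat{0}$ with $\mat{B}\succ 0$ forces $\mat{K}=\mat{0}$) upgrades the middle equation to $\mat{U}^T\mat{SV}=-\lambda\mat{I}$, yielding the exact block form $\mat{S}=-\lambda\mat{UV}^T+\mat{U}_\perp(\mat{U}_\perp^T\mat{SV}_\perp)\mat{V}_\perp^T$ from which both the inequality and the equivalence with global optimality follow as genuine two-way implications. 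You also handle a detail the paper elides (that $\mat{UV}^T$ equals $\mat{U}_{\mat{X}}\mat{V}_{\mat{X}}^T$ from the SVD of $\mat{X}$, via diagonalization of $\mat{B}$) and get the sign of the sub-gradient condition right where the appendix is sloppy ($-\mat{S}/\lambda\in\partial\|\mat{X}\|_*$, not $\mat{S}$). In short: same route, but your version closes the gaps in the equality-case argument; the cost is only the short lemma on $\mat{K}$, which is exactly where positive definiteness (rather than mere invertibility) of the non-diagonal scaling $\mat{B}$ enters.
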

 \begin{proof}
 This is in fact \change{rewriting} the first-order optimality condition of \change{(\ref{eq:general_formulation})} \cite{cai10a,ma11a}. \change{The proof is given in Appendix \ref{appendix:foc_convex}.}
 \end{proof}

A local minimum of (\ref{eq:factorized_formulation}) is identified with the global minimum of (\ref{eq:general_formulation}) if \\$\| \mat{S}\|_{op} - \lambda \leq \epsilon$ where $\epsilon$ is a user-defined threshold.

\subsection{Duality gap computation} \label{sec:duality_gap}
Proposition \ref{prop:convergence} provides a criterion to check the global optimality of a solution of (\ref{eq:factorized_formulation}). Here however, it provides no guarantees on \emph{closeness} to the global solution. A better way of certifying closeness for the optimization problem of type (\ref{eq:general_formulation}) is provided by the duality gap. The duality gap characterizes the difference of the obtained solution from the optimal solution and is always non-negative \cite{boyd04a}. 

\begin{proposition}\label{prop:dual_formulation}
\change{The Lagrangian dual formulation of (\ref{eq:general_formulation}) is}
\begin{equation}\label{eq:dual_formulation}
\begin{array}{lll}
\max\limits_{\mat M}& - f^*(\mat{M})\\
 \subject &  \| \mat{M} \|_{op} \leq \lambda 
\end{array}
\end{equation}
where $\mat{M} \in \mathbb{R}^{n \times m}$ is the dual variable, $\| \mat{M} \|_{op}$ is the largest singular value of $\mat{M}$ and is the dual norm of the trace norm. $f^*$ is the Fenchel conjugate \cite{bach11a, boyd04a} of $f$, defined as 
$
f^*(\mat{M}) = {\rm sup}_{\mat{X} \in \mathbb{R}^{n \times m}} \left [ \trace(\mat{M}^T \mat{X})  - f(\mat{X}) \right ].
$
\end{proposition}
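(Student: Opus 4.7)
The plan is to derive the dual by introducing a splitting variable, form the Lagrangian, and then compute the inner minimization in closed form using two standard ingredients: the definition of the Fenchel conjugate and the fact that the dual norm of the trace norm is the operator norm.

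First I would rewrite (\ref{eq:general_formulation}) in the equivalent constrained form
\begin{equation*}
\min_{\mat{X},\mat{Y}}\ f(\mat{Y}) + \lambda \|\mat{X}\|_* \quad \text{subject to}\quad \mat{Y} = \mat{X},
\end{equation*}
and introduce the dual variable $\mat{M} \in \mathbb{R}^{n \times m}$ associated with the equality constraint. The Lagrangian is
\begin{equation*}
L(\mat{X},\mat{Y},\mat{M}) \;=\; f(\mat{Y}) + \lambda \|\mat{X}\|_* + \trace(\mat{M}^T(\mat{X} - \mat{Y})).
\end{equation*}
Because $L$ is separable in $\mat{X}$ and $\mat{Y}$, the dual function splits as the sum of two independent infima.

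For the $\mat{Y}$-part, the infimum $\inf_{\mat{Y}}\bigl[f(\mat{Y}) - \trace(\mat{M}^T\mat{Y})\bigr]$ is $-f^*(\mat{M})$ directly from the definition of the Fenchel conjugate quoted in the statement. For the $\mat{X}$-part, I need to evaluate $\inf_{\mat{X}}\bigl[\lambda\|\mat{X}\|_* + \trace(\mat{M}^T\mat{X})\bigr]$. The key lemma here is that the operator norm is the dual norm of the trace norm, i.e.\ $|\trace(\mat{M}^T\mat{X})| \le \|\mat{M}\|_{op}\|\mat{X}\|_*$ with equality achieved along appropriate rank-one directions built from the leading singular vectors of $\mat{M}$. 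Consequently this infimum equals $0$ when $\|\mat{M}\|_{op}\le \lambda$ (since $\lambda\|\mat{X}\|_*+\trace(\mat{M}^T\mat{X})\ge 0$, attained at $\mat{X}=\mat{0}$) and equals $-\infty$ otherwise (take $\mat{X}=-t\mat{u}\mat{v}^T$ with $\mat{u},\mat{v}$ the leading singular vectors of $\mat{M}$ and let $t\to\infty$).

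Assembling the two pieces, the dual function is finite precisely on $\{\mat{M}:\|\mat{M}\|_{op}\le\lambda\}$ and equals $-f^*(\mat{M})$ there, which is exactly (\ref{eq:dual_formulation}). Because the primal is convex and the equality constraint $\mat{Y}=\mat{X}$ trivially satisfies Slater's condition (take any $\mat{X}$ in the relative interior of the domain of $f$), strong duality holds and the dual value coincides with the primal value. The step requiring the most care is the variational identity $\sup_{\mat{X}\ne 0}\trace(\mat{M}^T\mat{X})/\|\mat{X}\|_* = \|\mat{M}\|_{op}$, since the rest is a routine Lagrangian manipulation; I would either prove it in one line using the SVD of $\mat{M}$ and von Neumann's trace inequality, or cite the standard duality result for Schatten norms.
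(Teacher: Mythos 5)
Your proof is correct and follows essentially the same route as the paper's: introduce a splitting variable with an equality constraint, form the Lagrangian, and separate the dual function into a Fenchel-conjugate term and a term that vanishes on the dual-norm ball $\|\mat{M}\|_{op}\le\lambda$. The only differences are cosmetic (the roles of the two copies of the variable are swapped) plus some extra care you add --- the explicit $-\infty$ case and the remark on Slater's condition --- which the paper omits.
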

\begin{proof}
\change{The proof is given in Appendix \ref{appendix:dual_formulation}.}
\end{proof}

When \change{$\| \mat{M} \|_{op} \leq \lambda$}, the expression of duality gap is 
\begin{equation} \label{eq:duality_gap}
f(\mat{X}) + \lambda \| \mat{X}\|_*  + f^*(\mat{M}) 
\end{equation}
where $\mat{M}$ is the \emph{dual candidate}. A good choice for the dual candidate $\mat{M}$ is $\mat{S}\  (= \Grad_{\mat{X}} f(\mat{X}))$ with appropriate scaling to satisfy the operator norm constraint: $\mat{M} = \min\{1, \frac{\lambda }{\| \mat{S} \|_{op}} \} \mat{S}$ \cite{bach11a}. As an extension for some functions $f$ of type $f(\mat{X}) = \psi(\mathcal{A} (\mat{X}))$ where $\mathcal{A}$ is a linear operator, computing the Fenchel conjugate of the function $\psi$ may be easier than that of $f$.  When $\| \mathcal{A}^*( \mat{M}) \|_{op} \leq \lambda$ the duality gap, using similar calculations as above, is $f(\mat{X}) + \lambda \| \mat{X}\|_*  + \psi^*(\mat{M}) \quad {\rm when}$ where $\mathcal{A}^*$ is the adjoint operator of $\mathcal{A}$ and $\psi^*$ is the Fenchel conjugate of $\psi$. A good choice of $\mat{M}$ is again $\min \{1, \frac{\lambda }{\sigma_\psi}\} \Grad \psi$ where $\sigma_\psi$ is the dominant singular value of $\mathcal{A}^{*}(\Grad \psi)$ \cite{bach11a}.

\section{Manifold-based optimization to solve the non-convex problem (\ref{eq:factorized_formulation})}\label{sec:manifold_optimization}
In this section we \change{propose an algorithm} to obtain a local minimum for the problem (\ref{eq:factorized_formulation}). In contrast to first-order optimization algorithms proposed earlier in \cite{meyer11a, meyer10a, keshavan09b}, we develop a second-order trust-region algorithm that has a quadratic rate of convergence \cite{nocedal06a, absil08a}. The idea behind a trust-region algorithm is to build locally a quadratic model of the function at a point and solve the \emph{trust-region subproblem} to get the next potential iterate. Depending on whether the decrease in the objective function is sufficient or not, the potential iterate is accepted or rejected. Details about a general trust-region algorithm are given in \cite{nocedal06a}. We rewrite (\ref{eq:factorized_formulation}) as
\begin{equation}\label{eq:factorized_formulation_two_var}
\begin{array}{ll}
\min\limits_{\mat{U}, \mat{B}, \mat{V}} & \bar{\phi}(\mat{U}, \mat{B}, \mat{V}) \\
\subject & (\mat{U},\mat{B},\mat{V})  \in \Stiefel{p}{n} \times \PD{p} \times \Stiefel{p}{m} \\
\end{array}
\end{equation}
where $\bar{\phi}(\mat{U}, \mat{B}, \mat{V}) = f(\mat{U}\mat{B}\mat{V}^T) + \lambda \trace(\mat{B})$ is introduced for notational convenience. An important observation for second-order algorithms \cite{absil09a, absil08a} is that the local minima of the problem (\ref{eq:factorized_formulation_two_var}) are not isolated in the search space 
\[
\overline{\mathcal{M}}_p =  \Stiefel{p}{n} \times \PD{p} \times \Stiefel{p}{m}.
\]
This is because the cost function is invariant under rotations, $\mat{U}\mat{B}\mat{V}^T =\\ (\mat{UO})(\mat{O}^T\mat{BO}) (\mat{VO})^T$ for any $p\times p$ rotation matrix $\mat{O} \in \OG{p}$. \change{Note that $\OG{p}$ takes away all the symmetry of the total space. This is done by counting the dimension of the quotient space\footnote{The dimension of the total space is $(np - \frac{p(p+1)}{2}) + (mp -\frac{p(p+1)}{2}) + \frac{p(p+1)}{2}$ and that of $\OG{p}$ is $\frac{p(p-1)}{2}$. Hence, the dimension of the quotient space is equal to dimension of total space $-$ dimension of $\OG{p}$.} which is $(n + m -p)p$ . This is same as dimension of the rank-$p$ matrices.}

To remove the symmetry of the cost function, we identify all the points of the search space that belong to the equivalence class defined by
\[
[(\mat{U}, \mat{B}, \mat{V})]  =  \{   (\mat{U}\mat{O},\mat{O}^T \mat{B} \mat{O}, \mat{V}\mat{O}  )  | \mat{O} \in \OG{p}  \}.
\]
The set of all such equivalence classes is denoted by
\begin{equation}\label{eq:quotient_manifold}
\mathcal{M}_p = \overline{\mathcal{M}}_p / \OG{p}
\end{equation}
\change{which has the structure of a smooth quotient manifold  $\overline{\mathcal{M}}_p$ by $\OG{p}$ \cite[ Theorem $9.16$]{lee03a}.} Problem (\ref{eq:factorized_formulation_two_var}) is thus conceptually an \emph{unconstrained} optimization problem on the quotient manifold $\mathcal{M}_p$ in which the minima are isolated. Computations are performed in the total space $\overline{\mathcal{M}}_p$, which is the product space of well-studied manifolds.
\begin{figure}[ht]
\centering
\includegraphics[scale = 0.45]{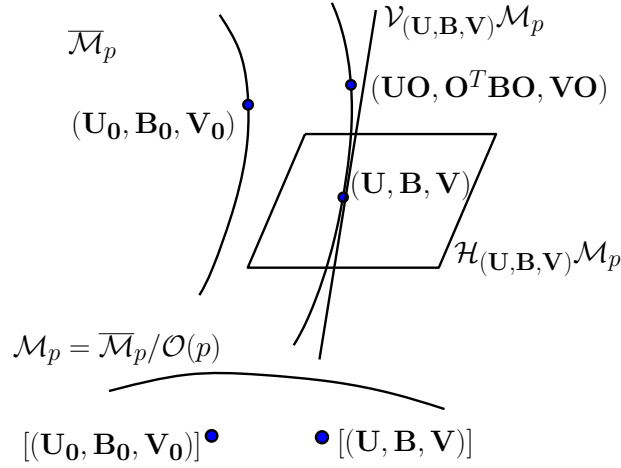}
\caption{The quotient manifold representation of the search space.}
\label{fig:quotient_manifold}
\end{figure}

\subsection*{Tangent space of $\mathcal{M}_p$}
Tangent vectors at a point $x \in \mathcal{M}_p$ have a matrix representation in the tangent space of the total space $\overline{\mathcal{M}}_p$. Note that $\bar{x}$ belongs to $\overline{\mathcal{M}}_p$ and its equivalence class is represented by the element $x\in \mathcal{M}_p$ such that $x = [\bar{x}]$. Because the total space is a product space $\Stiefel{p}{n} \times \PD{p} \times \Stiefel{p}{m}$, its tangent space admits the decomposition at a point $\bar{x} = (\mat{U}, \mat{B}, \mat{V})$
\[
T_{\bar{x}} \overline{\mathcal{M}}_p =  T_{\mat{U}} \Stiefel{p}{n} \times T_{\mat{B}}\PD{p} \times T_{\mat{V}} \Stiefel{p}{m}
\]
and the following characterizations are well-known \cite{edelman98a,smith05a}
\[
\begin{array}{lll}
T_{\mat{U}} \Stiefel{p}{n} &= & \{ \mat{Z}_\mat{U} -  \mat{U}\Sym(\mat{U}^T \mat{Z}_{\mat{U}})\  |\   \mat{Z}_\mat{U}  \in \mathbb{R}^{n  \times p} \}\\
T_{\mat{B}}\PD{p} & =& \symmetric{p}
\end{array}
\]
where $\symmetric{p}$ is the set of $p\times p$ symmetric matrices.

Note that an arbitrary matrix $(\mat{Z}_{\mat{U}}, \mat{Z}_{\mat{B}}, \mat{Z}_{\mat{V}}) \in \mathbb{R}^{n \times p} \times \mathbb{R}^{p \times p} \times \mathbb{R}^{m \times (p)}$ is projected on the tangent space $T_{\bar{x}} \overline{\mathcal{M}}_p$ by the linear operation
\begin{equation}\label{eq:projection_tangent_space}
\begin{array}{ll}
\Psi_{\bar{x}}  (  \mat{Z}_\mat{U}, \mat{Z}_\mat{B}, \mat{Z}_\mat{V} ) =  ( \mat{Z}_\mat{U} - \mat{U} \Sym(\mat{U}^T \mat{Z}_\mat{U}), \Sym{(\mat{Z}_{\mat {B}})},  \mat{Z}_\mat{V} - \mat{V} \Sym(\mat{V}^T \mat{Z}_\mat{V}) ).
  \end{array}
\end{equation}
\change{where $\Sym(\mat{Z}_{\mat B}) =  (\mat{Z}_{\mat B} + \mat{Z}^T_{\mat B})/2$}. A matrix representation of the tangent space at $x \in \mathcal{M}_p$ relies on the decomposition of $T_{\bar{x}} \overline{\mathcal{M}}_p$ into its \emph{vertical} and \emph{horizontal} subspaces. The vertical space $\mathcal{V}_{\bar{x}}\mathcal{M}_p$ is the subspace of $T_{\bar{x}} \overline{\mathcal{M}}_p$ that is tangent to the equivalence class $[\bar{x}]$
\begin{equation}\label{eq:vertical_space}
\mathcal{V}_{\bar{x}} \mathcal{M}_p =  \{  (   \mat{U}\mat{\Omega} ,  \mat{B\Omega} - \mat{\Omega B}, \mat{V}\mat{\Omega}   )\  |\ \mat{\Omega} \in \skewSymmetric{p}  \}
\end{equation}
\change{where $\skewSymmetric{p}$ is the set of skew symmetric matrices of size $p \times p$}. The horizontal space $\mathcal{H}_{\bar{x}}\mathcal{M}_p$ must be chosen such that $T_{\bar{x}} \overline{\mathcal{M}}_p =  \mathcal{H}_{\bar{x}} \mathcal{M}_p \oplus  \mathcal{V}_{\bar{x}} \mathcal{M}_p$. We choose $\mathcal{H}_{\bar{x}}\mathcal{M}_p$ as the orthogonal complement of $\mathcal{V}_{\bar{x}}\mathcal{M}_p$ for the metric
\begin{equation}\label{eq:metric}
\begin{array}{lll}
\bar{g}_{\bar{x}}
 (      \xi_{\bar{x}} ,   \eta_{\bar{x}}  )  & =  &\trace  (     \xi_{\mat U}^T  \eta_{\mat U})   +  \trace ( \mat{B}^{-1} \xi_{\mat B} \mat{B}^{-1} \eta_\mat{B}  ) +  \trace (     \xi_{\mat V}^T  \eta_{\mat V} ),
\end{array}
\end{equation}
which picks the normal metric of the Stiefel manifold \cite{edelman98a} and the natural metric of the positive definite cone \cite{smith05a}. Here $\xi_{\bar{x}}$ and $\eta_{\bar{x}}$ are elements of $T_{\bar{x}} \mathcal{M}_p$. With this choice, a horizontal tangent vector $\zeta_{\bar{x}}$ is any tangent vector $(\zeta_{\mat{U}}, \zeta_{\mat{B}}, \zeta_{\mat{V}})$ belonging to the set
\begin{equation}\label{eq:horizontal_space}
\begin{array}{lll}
\mathcal{H}_{\bar{x}} \mathcal{M}_p & =  &  \{    (  \zeta_{\mat{U}} , \zeta_\mat{B}, \zeta_{\mat{V}}   )  \in  T_{\bar{x}} \overline{\mathcal{M}}_p \ | \  \bar{g}_{\bar{x}}    (  (\zeta_\mat{U}, \zeta_{\mat{B}},\zeta_{\mat{V}}  ), (  \mat{U}\mat{\Omega} ,  ( \mat{B\Omega} - \mat{\Omega B} ) , \mat{V}\mat{\Omega}  )      )  = 0   \} \\
 \end{array}
\end{equation}
\change{for all $\mat{\Omega} \in \skewSymmetric{p}$. Another characterization of the horizontal space is $\mathcal{H}_{\bar{x}} \mathcal{M}_p =    \{    (  \zeta_{\mat{U}} , \zeta_\mat{B}, \zeta_{\mat{V}}   )  \in  T_{\bar{x}} \overline{\mathcal{M}}_p\  |\  \left(\zeta_{\mat{U}}^T\mat{U} + \mat{B}^{-1}\zeta_{\mat B}  - \zeta_{\mat B} \mat{B}^{-1} +  \zeta_{\mat{V}}^T\mat{V} \right) \rm{\  is\ symmetric } 
  \}$. The horizontal space is invariant by the group action along the equivalence class.} Starting from an arbitrary tangent vector $\eta_{\bar{x}} \in T_{\bar{x}} \overline{\mathcal{M}}_p$ we construct its projection on the horizontal space by picking $\mat{\Omega} \in \skewSymmetric{p}$ such that
\begin{equation}\label{eq:projection}
\Pi_{\bar{x}}(\eta_{\bar{x}})=  
\begin{array}{ll}
( \eta_{\mat{U}} - \mat{U}\mat{\Omega} , \eta_{\mat{B}} - (\mat{B\Omega} - \mat{\Omega B}),  \eta_{\mat{V}} - \mat{V}\mat{\Omega}   )
\end{array}
 \in \mathcal{H}_{\bar{x}} \mathcal{M}_p,
\end{equation}
Using the calculation (\ref{eq:horizontal_space}), the unique $\mat{\Omega}$ that satisfies (\ref{eq:projection}) is the solution of the \emph{Lyapunov} equation
\begin{equation}\label{eq:Lyapunov}
\begin{array}{llll}
\mat{\Omega} \mat{B}^2 + \mat{B}^2 \mat{\Omega} = \mat{B} ( \Skew(\mat{U}^T \eta_{\mat{U}}) - 2 \Skew(\mat{B}^{-1} \eta_{\mat {B}})+ \Skew(\mat{V}^T \eta_{\mat{V}})    ) \mat{B}
\end{array}
\end{equation}
where $\Skew(\mat{A}) = (\mat{A} - \mat{A}^T)/2$ and \change{$(\eta_{\mat U}, \eta_{\mat B}, \eta_{\mat V})$ is the matrix representation of $\eta_{\bar x}$}. The numerical complexity of solving the Lyapunov equation is $O(p^3)$ \cite{bartels72a}.

\subsection*{The Riemannian submersion $(\mathcal{M}_p, g)$}
\change{The choice of the metric (\ref{eq:metric}), which is invariant along the equivalence class $[\bar{x}]$ turns the quotient manifold $\mathcal{M}_p$ into a Riemannian submersion of $(\overline{\mathcal{M}}_p, \bar{g})$ \cite[Theorem $9.16$]{lee03a} and \cite[Section $3.6.2$]{absil08a}}. As shown in \cite{absil08a}, this special construction allows for a convenient matrix representation of the gradient \cite[Section $3.6.2$]{absil08a} and the Hessian \cite[Proposition $5.3.3$]{absil08a} on the abstract manifold $\mathcal{M}_p$. The Riemannian gradient of $\phi: \mathcal{M}_p \rightarrow \mathbb{R}: x \mapsto \phi(x) = \bar{\phi}(\bar{x})$ is uniquely represented by its horizontal lift in $\overline{\mathcal{M}}_p$ which has the matrix representation
\begin{equation}\label{eq:Riemannian_gradient}
\overline { {\grad}_x \phi} = \grad_{\bar{x}} \bar{\phi}.
\end{equation}
It should be emphasized that $\grad_{\bar{x}} \bar{\phi}$ is in the the tangent space $T_{\bar{x}} \overline{\mathcal{M}}_p$. However, due to invariance of the cost \change{function} along the equivalence class $[\bar{x}]$, $\grad_{\bar{x}} \bar{\phi}$ also belongs to the horizontal space $\mathcal{H}_{\bar{x}} \mathcal{M}_p$ and hence, the equality in (\ref{eq:Riemannian_gradient}) \cite{absil08a}. \change{The matrix expression of $\grad_{\bar{x}} \bar{\phi}$ in the total space $\overline{\mathcal{M}}_p$ at a point $\bar{x} = (\mat{U}, \mat{B}, \mat{V})$ is obtained from its definition: it is unique element of $T_{\bar x} \overline{\mathcal M}_{p}$ that satisfies ${\rm D} \bar{\phi}[\eta_{\bar x}] = \bar{g}_{\bar{x}}(     \grad_{\bar x} \bar{\phi},   \eta_{\bar{x}}  )$ for all $ \eta_{\bar x} \in T_{\bar x} \overline{\mathcal M}_{p}$. ${\rm D} \bar{\phi}[\eta_{\bar x}]$ is the standard Euclidean directional derivative of $\bar{\phi}$ in the direction $\eta_{\bar x}$. This definition leads to the matrix representations
}
\begin{equation}\label{eq:riemannian_gradient}
\begin{array}{lll}
\grad_{\mat{U}} \bar{\phi}= \grad_{\mat U} \bar{\phi}_{\rm Euclidean},\ 
\grad_{\mat{B}} \bar{\phi}= \mat{B} \left( \grad_{\mat B} \bar{\phi}_{\rm Euclidean} \right) \mat{B}\\
\grad_{\mat{V}} \bar{\phi}= \grad_{\mat V} \bar{\phi}_{\rm Euclidean}\\
\end{array}
\end{equation}
\change{where $\grad_{\bar{x}}\bar{\phi}_{\rm Euclidean}$ is $\Psi_{\bar{x}}(\Grad_{\mat{U}} \bar{\phi}, \Grad_{\mat{B}} \bar{\phi}, \Grad_{\mat{V}} \bar{\phi})$ and $(\Grad_{\mat{U}} \bar{\phi}, \Grad_{\mat{B}} \bar{\phi}, \Grad_{\mat{V}} \bar{\phi})$ is the gradient of $\bar{\phi}$ in the Euclidean space $\mathbb{R}^{n \times r} \times \mathbb{R}^{r \times r} \times \mathbb{R}^{m \times r}$. Here $\Psi_{\bar{x}}$ is projection operator (\ref{eq:projection_tangent_space}).}

Likewise, the Riemannian connection $\rc_\nu \eta$ on $\mathcal{M}_p$ is uniquely represented by its horizontal lift in $\overline{\mathcal{M}}_p$ which is $\overline { {\rc}_\nu \eta} = \Pi_{\bar{x}} (\overline{\rc}_{\bar{\nu}} \bar{\eta})$ where $\nu$ and $\eta$ are vector fields in $\mathcal{M}_p$ and $\bar{\nu}$ and $\bar{\eta}$ are their horizontal lifts in $\overline{\mathcal{M}}_p$. Once again, the Riemannian connection $\overline{\rc}_{\bar{\nu}} \bar{\eta}$ on $\overline{\mathcal{M}}_p$ has well-known expression \cite{journee09a, smith05a, absil08a}, obtained by means of the \emph{Koszul formula}. \change{The Riemannian connection on the Stiefel manifold is derived in \cite[Example $4.3.6$]{journee09a} and on the positive definite cone is derived in \cite[Appendix B]{meyer11b}. Finally, the Riemannian connection on the total space is given by}
\begin{equation}\label{eq:riemannian_connection}
\begin{array}{lll}
\overline{\rc}_{\bar{\nu}} \bar{\eta} = \Psi_{\bar{x}}(\D \bar{\eta}  [\bar{\nu}] ) -  
\begin{array}{ll}
\Psi_{\bar{x}}( \nu_{\mat{U}}\Sym(\mat{U}^T \eta_{\mat{U}}),   \Sym(\nu_{\mat{B}} \mat{B}^{-1} \eta_{\mat{B}}) ,\nu_{\mat{V}}\Sym(\mat{V}^T \eta_{\mat{V}}) )
\end{array}
\end{array}
\end{equation}
Here $\D \bar{\eta} [\bar{\nu}]$ is the classical Euclidean directional derivative of $\bar{\eta}$ in the direction $\bar{\nu}$. The Riemannian Hessian in $\overline{\mathcal{M}}_p$ has, thus, the following matrix expression
\begin{equation}\label{eq:riemannian_hessian}
\overline{\hess \phi(x) [\xi]}= \Pi_{\bar{x}}(  \overline{\rc}_{\bar{\xi}} \overline{ \grad \phi}   ).
\end{equation}
for any $\xi \in T_x \mathcal{M}_p$ and its horizontal lift $\bar{\xi} \in \mathcal{H}_{\bar{x}} \mathcal{M}_p$.

\subsection*{Trust-region subproblem and retraction on $\mathcal{M}_p$}
Trust-region algorithms on a quotient manifold with guaranteed quadratic rate convergence have been proposed in \cite[Algorithm $10$]{ absil08a}. \change{The convergence of the trust-region algorithm is quadratic because the assumptions \cite[Theorem $7.4. 11$]{absil08a} are satisfied locally.} The trust-region subproblem on $\mathcal{M}$ is formulated as 
\[
\begin{array}{ll}
\min\limits_{\xi \in T_{x} \mathcal{M}_p} & \phi (x) +  g_{x} (  \xi ,   \grad \phi(x) ) + \frac{1}{2}  g_{x} (  \xi, \hess \phi(x) [\xi]    ) \\
\subject & {g}_{x}  ( \xi , \xi ) \leq \delta.
\end{array}
\]
where  $\delta$ is the trust-region radius and $\grad\phi$ and $\hess \phi$ are the Riemannian gradient and Hessian on $\mathcal{M}_p$. The problem is horizontally lifted to the horizontal space $\mathcal{H}_{\bar{x}} \mathcal{M}_p$ where it is solved using a \emph{truncated-conjugate gradient} method with parameters set as in \cite[Alg~2]{absil07a}. Solving the above trust-region subproblem leads to a direction $\xi$ that minimizes the quadratic model. 

To find the new iterate based on the obtained direction $\xi$, a mapping in the tangent space $T_{x}\mathcal{M}_p$ to the manifold $\mathcal{M}_p$ is required. This mapping is more generally referred to as \emph{retraction} which maps the vectors from the tangent space onto the points on the manifold, $R_{x}: T_{x} \mathcal{M}_p  \rightarrow \mathcal{M}_p $ (details in \cite[Definition~4.1.1]{absil08a}). In the present case, a retraction of interest is \cite{absil08a,meyer11a} 
\begin{equation}\label{eq:retraction}
\begin{array}{ll}
R_{\mat{U}} (\xi_{\mat U}) = \rm{uf}(\mat{U} + \xi_{\mat U}), \quad
R_{\mat{B}} (\xi_{\mat B}) = \mat{B}^{\frac{1}{2}} \rm{exp}   (    \mat{B}^{- \frac{1}{2}}  \xi_{\mat B}  \mat{B}^{- \frac{1}{2}}   )   \mat{B}^{\frac{1}{2}}\ {\rm and} \\
R_{\mat{V}} (\xi_{\mat V}) = \rm{uf}(\mat{V} + \xi_{\mat V}) \\
\end{array}
\end{equation}
where $\rm{uf}$ is a function that extracts the orthogonal factor of the polar factorization, \change{i.e., $\rm{uf}(\mat{A}) = \mat{A} (\mat{A}^T \mat{A})^{-1/2}$} and $\rm{exp}$ is the \emph{matrix exponential} operator. The retraction on the positive definite cone is the natural exponential mapping for the metric (\ref{eq:metric}) \cite{smith05a}. \change{These well-known retractions on the individual manifolds is also a valid retraction on the quotient space by virtue of \cite[Proposition $4.1.3$]{absil08a}.} 
 
\subsection*{Numerical complexity}
The numerical complexity per iteration of the proposed trust-region algorithm to solve (\ref{eq:factorized_formulation_two_var}) depends on the computational cost of the following components. 
\begin{itemize}
\item{Objective function $\bar{\phi} \longrightarrow$ problem dependent }
\item{Metric $\bar{g}$ $\longrightarrow$ $O(np^2 + mp^2 + p^3)$}
\item{Euclidean gradient of $\bar{\phi}$ $\longrightarrow$ problem dependent }
\item{ $ \overline{\rc}_{\bar{\nu}} \bar{\eta} =  \Psi (\D \bar{\eta} [\bar{\nu}]) - \Psi ( \nu_{\mat{U}}\Sym(\mat{U}^T \eta_{\mat{U}}),   \Sym(\nu_{\mat{B}} \mat{B}^{-1} \eta_{\mat{B}}) ,\nu_{\mat{V}}\Sym(\mat{V}^T \eta_{\mat{V}}) ) $
\begin{itemize}
\item{$\D \bar{\eta} [\bar{\nu}] \longrightarrow$ problem dependent}
\item{Matrix $\nu_{\mat{U}}\Sym(\mat{U}^T \eta_{\mat{U}})$ $\longrightarrow$ $O(np^2)$}
\item{ Matrix $\Sym(\nu_{\mat{B}} \mat{B}^{-1} \eta_{\mat{B}})$  $\longrightarrow$ $O(p^3)$ }
\item{Matrix $\nu_{\mat{V}}\Sym(\mat{V}^T \eta_{\mat{V}})$ $\longrightarrow$ $O(mp^2 )$}
\end{itemize}
}
\item{Projection operator $\Psi$ $\longrightarrow$ $O(np^2 + mp^2 )$ 
}
\item{Projection operator $\Pi$ $\longrightarrow$ $O(np^2 + mp^2 + p^3)$ 
\begin{itemize}
\item{Lyapunov equation for $\mat{\Omega}$ $\longrightarrow$ $O(p^3)$}
\end{itemize}
}
\item{Retraction $R$ $\longrightarrow$ $O(np^2 + mp^2 + p^3)$}
\end{itemize}
As shown above all the manifold related operations \change{have} linear complexity in $n$ and $m$. Other operations depend on the problem at hand and are computed in the search space $\overline{\mathcal{M}}_p$. With $p \ll \min\{n ,m\}$ the computational burden on the algorithm considerably reduces.

\section{An optimization scheme to solve convex program (\ref{eq:general_formulation})}
Starting with rank $1$ problem, we alternate a second-order local optimization algorithm on fixed-rank manifold with a first-order rank-one update. The scheme is shown in Table \ref{tab:algorithm}. \change{The rank update ensures that the cost is decreased and the new point belongs to $\overline{\mathcal{M}}_{p+1}$.}
 \begin{proposition}\label{prop:descent_directions}
 If $\mat{X} = \mat{U}\mat{B}\mat{V}^T$ \change{is a stationary point of (\ref{eq:factorized_formulation})} then the rank-one update 
  \begin{equation}\label{eq:update}
\mat{X}_+ = \mat{X} - \beta uv^T
 \end{equation}
ensures a decrease in the objective function $f(\mat{X}) + \lambda\| \mat{X} \|_*$ provided that $\beta > 0$ is sufficiently small and the descent directions $u\in \mathbb{R}^{n}$ and $v\in \mathbb{R}^{m}$ are the dominant left and right singular vectors with singular value $\sigma_1$ of the dual variable $\mat{S} = {\rm{Grad}}_{\mat{X}}f(\mat{U}\mat{B}\mat{V}^T)$. \change{The maximum decrease in the objective function is obtained for $\beta = \frac{\sigma_1 - \lambda}{L_f}$ where $L_f$ is the Lipschitz constant such that $\| \nabla f_{\mat X}(\mat{X}) -  \nabla f_{\mat Y}(\mat{Y}) \|_F \leq L_f \| \mat{X} - \mat{Y} \|_F$ for all $\mat{X}, \mat{Y} \in \mathbb{R}^{n\times m}$.}
\end{proposition}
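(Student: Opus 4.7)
The plan is to separate the smooth part of the objective from the non-smooth trace-norm part, bounding each with its own simple inequality. For the smooth part, the standard descent lemma (which is available because $\nabla f$ is $L_f$-Lipschitz) gives
\begin{equation*}
f(\mat{X}_+) \leq f(\mat{X}) + \langle \mat{S}, \mat{X}_+ - \mat{X}\rangle + \tfrac{L_f}{2}\|\mat{X}_+ - \mat{X}\|_F^2,
\end{equation*}
where $\mat{S} = \Grad_{\mat{X}} f(\mat{X})$ and $\mat{X}_+ - \mat{X} = -\beta uv^T$. For the trace-norm part, the triangle inequality yields $\|\mat{X}_+\|_* \leq \|\mat{X}\|_* + \beta\|uv^T\|_* = \|\mat{X}\|_* + \beta$, since the rank-one matrix $uv^T$ with unit-norm $u, v$ has trace norm $1$.

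Next I evaluate the remaining quantities. Because $u$ and $v$ are the left and right singular vectors of $\mat{S}$ associated with $\sigma_1$, one has $\mat{S}v = \sigma_1 u$, whence $\langle \mat{S}, uv^T\rangle = u^T\mat{S}v = \sigma_1$; likewise $\|uv^T\|_F = 1$. Assembling the two bounds, and writing $h(\mat{X}) = f(\mat{X}) + \lambda\|\mat{X}\|_*$, I obtain
\begin{equation*}
h(\mat{X}_+) - h(\mat{X}) \leq -\beta(\sigma_1 - \lambda) + \tfrac{L_f}{2}\beta^2.
\end{equation*}
The closing step invokes Proposition \ref{prop:convergence}: at a stationary point of (\ref{eq:factorized_formulation}) that is not yet globally optimal for (\ref{eq:general_formulation}) one has $\sigma_1 = \|\mat{S}\|_{op} > \lambda$, so the linear term strictly dominates for small $\beta > 0$ and the update is a descent direction. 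Minimising the quadratic upper bound in $\beta$ via the first-order condition $-(\sigma_1 - \lambda) + L_f\beta = 0$ yields the claimed step length $\beta^\star = (\sigma_1 - \lambda)/L_f$, with guaranteed decrease $(\sigma_1 - \lambda)^2/(2L_f)$.

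There is no serious obstacle: the trace norm is treated solely through its sub-additivity, avoiding any subdifferential manipulation, and stationarity of $\mat{X}$ is only needed through Proposition \ref{prop:convergence} to certify $\sigma_1 > \lambda$. The one tactical point worth noting is that the triangle-inequality bound $\|\mat{X}_+\|_* \leq \|\mat{X}\|_* + \beta$ becomes an equality exactly when $u$ is orthogonal to the columns of $\mat{U}$ and $v$ to those of $\mat{V}$, in which case $\mat{X}_+$ naturally sits in $\overline{\mathcal{M}}_{p+1}$; this refinement is not required for the descent claim but aligns the result with the rank-one update rule used in the algorithm.
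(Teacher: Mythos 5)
Your proof is correct and follows essentially the same route as the paper's: the descent lemma for the $L_f$-smooth function $f$, the triangle inequality for the trace norm, and minimization of the resulting quadratic upper bound in $\beta$, with the strict inequality $\sigma_1 > \lambda$ at a non-optimal stationary point supplied by Proposition \ref{prop:convergence}. The only quibble is in your closing aside: orthogonality of $u$ to the columns of $\mat{U}$ and of $v$ to those of $\mat{V}$ is a \emph{sufficient} condition for equality in the trace-norm triangle inequality rather than an exact characterization, but as you note this plays no role in the descent claim.
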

\begin{proof}
This is in fact a descent step as shown in \cite{cai10a, ma11a, mazumder10a} but now projected onto the rank-one dominant subspace. \change{The	 proof is given in Appendix \ref{appendix:descent_directions}.}
\end{proof}

\begin{table}
\begin{center}
{\scriptsize
\framebox[5.0in]{
\begin{minipage}[t]{4.8in}
\textbf{Algorithm to solve convex problem (\ref{eq:general_formulation})}
\begin{enumerate}
\setcounter{enumi}{-1}
\item{
\begin{itemize}
\item{Initialize $p$ to $p_0$, a guess rank}.
\item{Initialize the threshold $\epsilon$ for convergence criterion, refer to Proposition \ref{prop:convergence}.}
\item{Initialize the iterates $\mat{U}_0 \in \Stiefel{p_0}{n}$, $\mat{B}_0 \in \PD{p_0}$ and $\mat{V}_0 \in \Stiefel{p_0}{m}$.}
\end{itemize}
}
\item{
Solve the non-convex problem (\ref{eq:factorized_formulation}) in the dimension $p$ to obtain a local minimum $(\mat{U}, \mat{B},\mat{V})$.
}
\item{Compute $\sigma_1$ (the dominant singular value) of dual variable $\mat{S} = \Grad_{\mat{X}}f(\mat{U}\mat{B}\mat{V}^T)$.
\begin{itemize}
\item{If $\sigma_1 - \lambda \leq \epsilon $ (or duality gap $\leq \epsilon$) due to Proposition \ref{prop:convergence}, output $\mat{X}=\mat{U}\mat{B}\mat{V}^T$ as the solution to problem (\ref{eq:general_formulation}) and stop.} 
\item{ Else, compute the update as shown in Proposition \ref{prop:descent_directions} and compute the new point $(\mat{U}_+, \mat{B}_+, \mat{V}_+)$ as described in (\ref{eq:embedding}). Set $p = p+1$ and repeat step $1$.}
\end{itemize}
}
\end{enumerate}
\end{minipage}
}
}
\end{center}
\caption{Algorithm to solve the trace norm minimization problem of type (\ref{eq:general_formulation}).}
\label{tab:algorithm}
\end{table}

A representation of $\mat{X}_+$ on $\overline{\mathcal{M}}_{p+1}$ is obtained from the singular value decomposition of $\mat{X}_+$. Since $\mat{X}_+$ is a rank-one update of $\mat{U}\mat{B}\mat{V}^T$, the singular value decomposition can be performed efficiently \cite{brand06a}. Defining $u'$ and $v'$ such that $u' = (\mat{I} - \mat{U}\mat{U}^T)u$ and $ v' = (\mat{I} - \mat{V}\mat{V}^T)(-\beta v)$, which are the orthogonal projections of $u$ and $v$ on the complementary space of $\mat{U}$ and $\mat{V}$,  the update (\ref{eq:update}) is written as
 \[
\begin{array}{lll}
\mat{X}_+  = \mat{UBV}^T - \beta u v^T = [ \mat{U} \quad  u ] \left [
\begin{array}{lll}
\mat{B} & \mat{0} \\
\mat{0} & 1\\
\end{array}
\right ]  [ \mat{V} \quad - \beta v ]^T = [ \mat{U} \quad  \frac{u'}{\| u' \|} ] \mat{K} [ \mat{V} \quad  \frac{v'}{\| v' \|} ]^T \\

\end{array}
 \]
where 
\[
\mat{K} = 
\left [
\begin{array}{lll}
1 & \mat{U}^T u\\
\mat{0} & \| u' \|\\
\end{array}
\right ] 
\left [
\begin{array}{lll}
\mat{B} & \mat{0} \\
\mat{0} & 1\\
\end{array}
\right ] 
\left [
\begin{array}{lll}
1 & -\beta \mat{V}^T v \\
\mat{0} & 1\\
\end{array}
\right ]^T.
\]
It should be noted that $\mat{K}$ is of size $(p +1 )\times (p+1)$. If $ \mat{P}' \mat{\Sigma}' {\mat{Q}'}^T$ is the singular value decomposition of $ \mat{K} $ where $\mat{P}'$ and $\mat{Q}'$ are orthonormal matrices and $\mat{\Sigma}'$ is a diagonal matrix then the new iterate $\mat{X}_+ \in \overline{\mathcal{M}}_{p+1}$ is 
\begin{equation} \label{eq:embedding}
\begin{array} {lll}
\mat{U}_+  = [ \mat{U} \quad  \frac{u'}{\| u' \|} ] \mat{P}', \quad
\mat{B}_+ =  \mat{\Sigma}'\quad {\rm and} \quad
\mat{V}_+  = [ \mat{V} \quad  \frac{v'}{\| v' \|} ] \mat{Q}'.
\end{array}
\end{equation}
\change{To compute an \emph{Armijo-optimal} $\beta$ we perform a \emph{backtracking} line search starting from the value $\frac{\sigma_1 - \lambda}{L_f}$ where $L_f$ is the Lipschitz constant for the gradient of $f$ \cite{nesterov03a}. The justification for this value is given in Appendix \ref{appendix:descent_directions}. In many problem instances a good value of $L_f$ can be well-approximated.}

\change{There is no theoretical guarantee that the algorithm in Table \ref{tab:algorithm} stops at $p = r$ where $r$ is the optimal rank. However, convergence to the global solution is guaranteed from the fact that the algorithm alternates between fixed-rank optimization and rank updates (unconstrained projected rank-1 gradient step) and both are descent iterates. Disregarding the fixed-rank step, the algorithm reduces to a gradient algorithm for a convex problem with classical global convergence guarantees. This theoretical certificate however does not capture the convergence properties of an algorithm that empirically always converges at a rank $p \ll \min \{m, n\}$ (most often at the optimal rank)}
\change{One advantage of the scheme, in contrast to trace norm minimization algorithms proposed in \cite{cai10a, toh10a, ma11a, mazumder10a}, is that it offers a tighter control of the rank at all intermediate iterates of the scheme. It should be also be emphasized that the stopping criterion threshold of the non-convex problem (\ref{eq:factorized_formulation}) and of the convex problem (\ref{eq:general_formulation}) are chosen separately. This means that rank-increments can be made after a fixed number of iterations of the manifold optimization without waiting for the trust-region algorithm to converge to a local minimum.}


\section{Regularization path}\label{sec:predictor_corrector}
In \change{most} applications, the optimal value of $\lambda$ is unknown \cite{mazumder10a} and \change{which means that in fact problem (\ref{eq:general_formulation}) be solved} for a number of regularization parameters. In addition, even if the optimal $\lambda$ is a priori known, \change{a path of solutions corresponding to different values of $\lambda$ proves interpretability to the intermediate iterates which are now global minima for different values of $\lambda$.} \change{This motivates to compute the complete regularization path of (\ref{eq:general_formulation}) for a number of values $\lambda$, i.e., defined as} $
\mat{X}^*(\lambda_{i})= {\argmin}_{\mat{X} \in \mathbb{R}^{n \times m}} \quad f(\mat{X}) + \lambda_i \| \mat{X} \|_* $ where $\mat{X}^*(\lambda_{i})$ is the solution to the $\lambda_i$ minimization problem.

 A common approach is the \emph{warm-restart} approach where the algorithm to solve the $\lambda_{i+1}$ problem is initialized from $\mat{X}^*(\lambda_i)$ and so on \cite{mazumder10a}. However, the warm-restart approach does not use the fact that the regularization path is \change{\emph{smooth}} especially when the values of $\lambda$ are close to each other. \change{An argument towards this is given later in the paragraph}. In this section we describe a \emph{predictor-corrector} scheme that takes into account the first-order smoothness and computes the path efficiently. To compute the path we take a \emph{predictor} (estimator) step to predict the solution and then rectify the prediction by a \emph{corrector} step. This scheme has been widely used in solving differential equations and regression problems \cite{park06a}. We extend the \emph{prediction} idea to the quotient manifold $\mathcal{M}_p$. The corrector step is carried out by initializing the algorithm in Table \ref{tab:algorithm} from the predicted point. If $\mat{X}^*(\lambda_i) = \mat{U}_{i} \mat{B}_{i} {\mat{V}_{i}}^T$ is the \change{fixed-rank factorization} then the solution of the $\lambda_{i+1}$ optimization problem is predicted (or estimated), i.e., $\hat{\mat{X}}(\lambda_{i+1}) = \hat{\mat{U}}_{i+1} \hat{\mat{B}}_{i+1} \hat{\mat{V}}_{i+1}^T$, by the two previous solutions $\mat{X}^*(\lambda_i) $ and $\mat{X}^*(\lambda_{i-1})$ at $\lambda_{i}$ and $\lambda_{i-1}$ respectively belonging to the same rank manifold $\mathcal{M}_p$. When $\mat{X}^*(\lambda_{i-1})$ and $\mat{X}^*(\lambda_i)$ belong to different rank manifolds we perform instead a warm restart to solve $\lambda_{i+1}$ problem. The complete scheme is shown in Table \ref{tab:predictor_corrector} and has the following advantages.
\begin{itemize}
\item{With a few number of rank increments we traverse the entire path.}
\item{Potentially every iterate of the optimization scheme is now a global solution for a value of $\lambda$.}
\item{The predictor-corrector approach outperforms the warm-restart approach in maximizing \emph{prediction accuracy} with minimal extra computations.}
\end{itemize}
In this section, we assume that the optimization problem (\ref{eq:general_formulation}) has a unique solution for all $\lambda$. A sufficient condition is that $f$ is \emph{strictly} convex, which can be enforced by adding a small multiple of the square Frobenius norm to $f$. \change{The global solution $\mat{X}^*(\lambda) = \mat{U} \mat{B} \mat{V}^T$ is uniquely characterized by the non-linear system of equations
\[
\begin{array}{lll}
\mat{SV} &= & \lambda \mat{U},\
\mat{U}^T \mat{SV} =  \lambda \mat{I} \ \  {\rm and}\   \
\mat{S}^T \mat{U} =  \lambda \mat{V} \\
\end{array}
\] 
which is obtained from the optimality conditions (\ref{eq:foc_factorized_formulation}) and Proposition \ref{prop:convergence}. The smoothness of $\mat{X}^*(\lambda)$ with respect to $\lambda$ follows from the Implicit Function Theorem \cite{krantz02a}.} \change{Another reasoning is by looking at the geometry of the dual formulation. Note that we employ the predictor-corrector step only when we are on the fixed-rank manifold which corresponds to a \emph{face} of the dual operator norm set. From (\ref{eq:dual_formulation}), the dual optimal solution is obtained by projection onto the dual set Smoothness of the dual variable $\mat{M}^*(\lambda)$ with respect to $\lambda$ follows from the smoothness of the projection operator \cite{hiriart-urruty93a}. Consequently, smoothness of the primal variable $\mat{X}^*(\lambda)$ follows from the smoothness assumption of $f$.}

\begin{table}
\begin{center}
{\scriptsize
\framebox[5.0in]{
\begin{minipage}[t]{4.8in}
\textbf{Computing the regularization path}
\begin{enumerate}
\setcounter{enumi}{-1}
\item{Given $ \{ \lambda_i\}_{i=1,...,N}$ in decreasing order. Also given are the solutions $\mat{X}^*(\lambda_1)$ and $\mat{X}^*(\lambda_2)$ at $\lambda_{1}$ and $\lambda_{2}$ respectively and their low-rank factorizations.}
\item{Predictor step:  
\begin{itemize}
\item{If $\mat{X}^*(\lambda_{i-1})$ and $\mat{X}^*(\lambda_{i})$ belong to the same quotient manifold $\mathcal{M}_p$ then construct a first-order approximation of the solution path at $\lambda_i$ and estimate $\hat{\mat{X}}(\lambda_{i+1})  $ as shown in (\ref{eq:predictor}).}
\item{Else $\hat{\mat{X}}(\lambda_{i+1})  = \mat{X}^*(\lambda_{i}) $.}
\end{itemize}
}
\item{Corrector step: Using the estimated solution of the $\lambda_{i+1}-\rm{problem}$, initialize the algorithm described in Table \ref{tab:algorithm} to compute the exact solution $\mat{X}^*(\lambda_{i+1})$.}
\item{Repeat steps $1$ and $2$ for all subsequent values of $\lambda$.}
\end{enumerate}
\end{minipage}
}
}
\end{center}
\caption{Algorithm for computing the regularization path. If $N$ is the number of values of $\lambda$ and $r$ is the number of rank increments then the scheme uses $r$ warm restarts and $N - r$ predictor steps to compute the full path.}
\label{tab:predictor_corrector}
\end{table}

\subsection*{Predictor step on the quotient manifold $\mathcal{M}_p$}
\begin{figure}[ht]
\centering
\includegraphics[scale = 0.40]{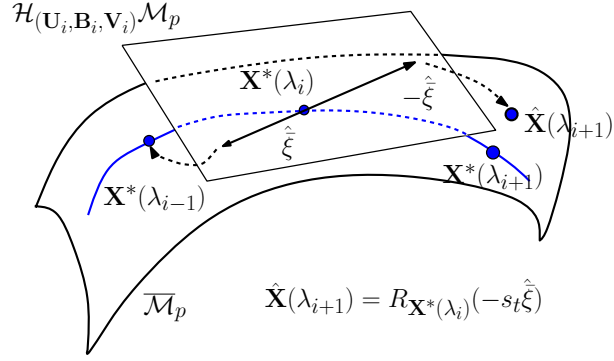}
\caption{Tracing the path of solutions using the predictor-corrector approach. The blue line denotes the curve of optimal solutions.}
\label{fig:predictor}
\end{figure}
Assuming (first-order) smoothness of the regularization path on $\mathcal{M}_p$ connecting $(\mat{U}_i, \mat{B}_i, \mat{V}_i) $ and $(\mat{U}_{i-1}, \mat{B}_{i-1}, \mat{V}_{i-1})$ in $\overline{\mathcal{M}}_p$, we build a first-order approximation of the geodesic, i.e. the curve of shortest length, connecting the two points. The estimated solution $\hat{\mat{X}}(\lambda_{i+1})$ is then computed by extending the \change{first-order approximation of the} geodesic. In other words, we need to identify a vector $\xi \in T_{[(\mat{U}_i, \mat{B}_i, \mat{V}_i)]}\mathcal{M}_p$ and its horizontal lift $\bar{{\xi}} \in \mathcal{H}_{(\mat{U}_i, \mat{B}_i, \mat{V}_i)} \mathcal{M}_p$ at $(\mat{U}_i, \mat{B}_i, \mat{V}_i)$ on $\overline{\mathcal{M}}_p$ defined as ${\bar{\xi}} = \rm{Log}_{(\mat{U}_i, \mat{B}_i, \mat{V}_i)}  (\mat{U}_{i-1}, \mat{B}_{i-1}, \mat{V}_{i-1}) $ that maps $(\mat{U}_{i-1}, \mat{B}_{i-1}, \mat{V}_{i-1})$ on $\overline{\mathcal{M}}_p$ to the horizontal space $\mathcal{H}_{(\mat{U}_j, \mat{B}_j, \mat{V}_j)} \mathcal{M}_p$ \cite{absil08a}. $\rm{Log}$ is referred to as \emph{logarithmic mapping}. Computing the logarithmic mapping (and hence, the geodesic) might be numerically costly in general. For the case of interest there is no analytic expression for the logarithmic mapping. Instead a numerically efficient way is to use the \change{approximate} inverse retraction $\hat{R}^{-1}_{(\mat{U}_i, \mat{B}_i, \mat{V}_i)} (\mat{U}_{i-1}, \mat{B}_{i-1}, \mat{V}_{i-1})$ where $\hat{R}^{-1}: \overline{\mathcal{M}}_p \rightarrow \mathcal{E}$ to obtain a direction in the space $\mathcal{E}$ followed by \emph{projection} onto the horizontal space $\mathcal{H}_{(\mat{U}_j, \mat{B}_j, \mat{V}_j)} \mathcal{M}_p$. Note that $\mathcal{E}:=  \mathbb{R}^{n \times p} \times \mathbb{R}^{p \times p} \times \mathbb{R}^{m \times p}$. The projection is accomplished using projection operators $\Psi: \mathcal{E} \rightarrow T_{(\mat{U}_i, \mat{B}_i, \mat{V}_i)} \overline{\mathcal{M}}_p$ and $\Pi: T_{(\mat{U}_i, \mat{B}_i, \mat{V}_i)} \overline{\mathcal{M}}_p \rightarrow \mathcal{H}_{(\mat{U}_i, \mat{B}_i, \mat{V}_i)} \mathcal{M}_p$ defined in Section \ref{sec:manifold_optimization}. Hence, an estimate on $\bar{\xi}$ is given as
\begin{equation}\label{eq:approximate_log}
\hat{\bar{\xi}}= \Pi (\Psi  (  \hat{R}^{-1}_{(\mat{U}_i, \mat{B}_i, \mat{V}_i)}  (\mat{U}_{i-1}, \mat{B}_{i-1}, \mat{V}_{i-1}) ) )
\end{equation}
For the retraction of interest (\ref{eq:retraction}) the \emph{Frobenius norm} error in the approximation of the Logarithmic mapping is bounded as 
\[
\begin{array}{lll}
\| \hat{\bar{\xi}} - \bar{\xi} \|_F &=  \| \hat{\bar{\xi}} -\hat{R}^{-1}_{(\mat{U}_i, \mat{B}_i, \mat{V}_i)}  (\mat{U}_{i-1}, \mat{B}_{i-1}, \mat{V}_{i-1}) + \hat{R}^{-1}_{(\mat{U}_i, \mat{B}_i, \mat{V}_i)}  (\mat{U}_{i-1}, \mat{B}_{i-1}, \mat{V}_{i-1}) - \bar{\xi} \|_F\\
& \leq \|  \hat{\bar{\xi}} -\hat{R}^{-1}_{(\mat{U}_i, \mat{B}_i, \mat{V}_i)}  (\mat{U}_{i-1}, \mat{B}_{i-1}, \mat{V}_{i-1})\|_F + \|  \hat{R}^{-1}_{(\mat{U}_i, \mat{B}_i, \mat{V}_i)}  (\mat{U}_{i-1}, \mat{B}_{i-1}, \mat{V}_{i-1}) - \bar{\xi}\|_F \\
 & \leq \min\limits_{\bar{\zeta} \in \mathcal{H}_{(\mat{U}_i, \mat{B}_i, \mat{V}_i)} \mathcal{M}_p} \| \bar{\zeta} -  \hat{R}^{-1}_{(\mat{U}_i, \mat{B}_i, \mat{V}_i)}  (\mat{U}_{i-1}, \mat{B}_{i-1}, \mat{V}_{i-1})\|_F + O(\| \bar{\xi}\|_F ^2), \\
 &\quad  {\rm as } \quad \| \bar{\xi}\| \rightarrow 0.
\end{array}
\]
The $O(\| \bar{\xi}\|_F ^2)$ approximation error comes from the fact that the retraction $R$ used is \change{at least} a first-order retraction \cite{absil08a}. This approximation is exact if $\overline{\mathcal{M}}_p$ is the Euclidean space. The \change{approximate} inverse retraction $\hat{R}^{-1}$ corresponding to the retraction $R$ described in (\ref{eq:retraction}) is computed as
\[
\begin{array}{ll}
\hat{R}^{-1}_{\mat{U}_i}( \mat{U}_{i-1}) = \mat{U}_{i-1} - \mat{U}_{i}, \ \ 
\hat{R}^{-1}_{\mat{B}_i}( \mat{B}_{i-1})  =  \mat{B}_i^{\frac{1}{2}} \rm{log}   (    \mat{B}_i^{- \frac{1}{2}}  {\mat B}_{i-1}  \mat{B}_i^{- \frac{1}{2}}  )   \mat{B}_i^{\frac{1}{2}} \\  
\hat{R}^{-1}_{\mat{V}_i}( \mat{V}_{i-1}) = \mat{V}_{i-1} - \mat{V}_{i} \\
\end{array}
\]
where $\rm{log}$ is the matrix logarithm operator. The predicted solution is then obtained by taking a step $s_t$ and performing a backtracking line search in the direction $-\hat{\bar{\xi}}$ i.e.,
\begin{equation}\label{eq:predictor}
 ( \hat{\mat{U}}_{i+1}, \hat{\mat{B}}_{i+1}, \hat{\mat{V}}_{i+1}  ) =  R_{(\mat{U}_i, \mat{B}_i, \mat{V}_i)} (-s_t \hat{\bar{\xi}}).
\end{equation}
A good choice of the initial step size $s_t$ is $ \frac{\lambda_{j+1} - \lambda_j}{ \lambda_j - \lambda_{j-1}}$. The motivation for the choice comes the observation that it is optimal when the solution path is a straight line in the Euclidean space. The numerical complexity to perform the prediction step in the manifold $\overline{\mathcal{M}}_p$ is $O(np^2 + mp^2 + p^3)$.

   
\section{Numerical Experiments}\label{sec:numerical_experiments}
The overall optimization scheme with \emph{descent-restart} and trust-region algorithm is denoted as ``Descent-restart + TR'' (TR).  We test the proposed optimization framework on the problems of low-rank matrix completion and multivariate linear regression where trace norm penalization has shown efficient recovery. Full regularization paths are constructed with optimality certificates. All simulations in this section are performed in MATLAB on a $2.53$ GHz Intel Core $\rm{i}5$ machine with $4$ GB of RAM.

\subsection{Diagonal versus matrix scaling}\label{sec:polar_vs_svd}
Before entering a detailed numerical experiment we illustrate here the empirical evidence that constraining $\mat{B}$ to be diagonal (as is the case with SVD) is detrimental to optimization. To this end, we consider the simplest implementation of a gradient descent algorithm for matrix completion problem (see below). The plots shown Figure \ref{fig:polar_vs_svd} compare the behavior of the same algorithm in the search space $\Stiefel{p}{n} \times \PD{p} \times \Stiefel{p}{m}$ and $\Stiefel{p}{n} \times {\rm Diag}_+(p) \times \Stiefel{p}{m}$ (SVD). ${\rm Diag}_+(p)$ is the set of diagonal matrices with positive entries. The empirical observation that convergence suffers from imposing diagonalization on $\mat{B}$ is a generic observation that \change{does not} depend on the particular problem at hand. The problem here involves completing a $200\times 200$ of rank $5$ from $40\%$ of observed entries. $\lambda$ is fixed at $10^{-10}$.
\begin{figure}[ht]
\centering
\includegraphics[scale = 0.33]{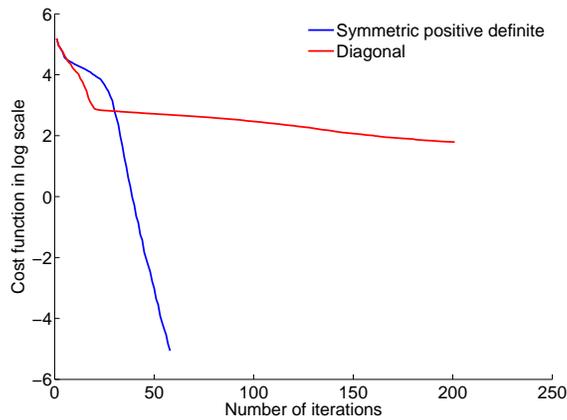}
\caption{Convergence of a gradient descent algorithm is affected by making $\mat{B}$ diagonal.}
\label{fig:polar_vs_svd}
\end{figure}

\subsection{Low-rank matrix completion}
The problem of matrix completion involves completing an $n\times m$ matrix when only a few entries of the matrix entries are known. Presented in this way the problem is ``ill-posed'' but becomes considerably interesting when in addition a \emph{low-rank reconstruction} is also sought. Given an incomplete low-rank (but unknown) $n\times m$ real matrix $\widetilde{\mat{X}}$, a convex relaxation of the matrix completion problem is 
\begin{equation}\label{eq:matrix_completion} 
\begin{array}{ll}
\min\limits_{\mat{X} \in \mathbb{R}^{n \times m}} & \| \mat{W}\odot (  \widetilde{\mat{X}}  -  \mat{X}   ) \|_F ^2 + \lambda\|\mat{X}\|_*\\
\end{array}
\end{equation}
for $\mat{X} \in \mathbb{R}^{n \times m}$ and a regularization parameter $\lambda$. Here $\| \cdot \|_F$ denotes the Frobenius norm, matrix $\mat{W}$ is an $n \times m$ \emph{weight} matrix with binary entries and the operator $\odot$ denotes element-wise multiplication. If $\mathcal{W}$ is the set of known entries in $\widetilde{\mat{X}}$ then, $\mat{W}_{ij} = 1$ if $(i,j) \in \mathcal{W}$ and $\mat{W}_{ij} = 0$ otherwise. The problem of matrix completion is known to be combinatorially hard. However, by solving the convex relaxation (\ref{eq:matrix_completion}) a low-rank reconstruction is possible with a very high probability \cite{candes09a,  keshavan09b} under certain assumptions on the number of observed entries. For an exact reconstruction, the lower bound on the number of known entries is typically of the order $O(nr + mr)$ where $r$ is the rank of the optimal solution, \change{$| \mathcal{W}| >  {\rm max} \{n, m \} \gg r$}. Consequently, it leads to a very sparse weight matrix $\mat{W}$, which plays a very crucial role for efficient algorithmic implementations. For our case, we assume that the lower bound on the number of entries is met and we seek a solution to the optimization problem (\ref{eq:matrix_completion}). Customizing the terminology for the present problem, the convex function is \change{$f(\mat{X}) = \| \mat{W}\odot (  \widetilde{\mat{X}}  -  \mat{X}   ) \|_F ^2$.} Using the factorization $\mat{X} = \mat{UBV}^T$, the rank-$p$ \change{objective function} is $\bar{\phi}(\mat{U}, \mat{B} ,\mat{V}) = \| \mat{W}\odot (  \widetilde{\mat{X}}  -  \mat{UBV}^T   ) \|_F ^2 + \lambda \trace(\mat{B})$ where $(\mat{U} ,\mat{B},\mat{V}) \in \overline{\mathcal{M}}_p$. The dual variable $\mat{S} = 2( \mat{W} \odot (\mat{UBV}^T - \widetilde{\mat{X}}))$.

The matrix representation of the gradient of $\bar{\phi}$ in \change{$\mathcal{E} := \mathbb{R}^{n\times r} \times \mathbb{R}^{r\times r} \times \mathbb{R}^{m\times r}$} is $\Grad_{\mat{U}} \bar{\phi} =   \mat{S}  \mat{VB}$, $\Grad_{\mat{B}} \bar{\phi} = \mat{U}^T \mat{S} \mat{V} + \lambda \mat{I}$ and $\Grad_{\mat{V}} \bar{\phi}  =  \mat{S}^T \mat{UB}$. The Euclidean directional derivative of the gradient of $\bar{\phi}$ along $\mat{Z} = (\mat{Z}_{\mat{U}},\mat{Z}_{\mat{B}}, \mat{Z}_{\mat{V}}) \in T_{\bar x}\overline{\mathcal M}_p$ is $(\mat{SV} \mat{Z}_{\mat{B}} + \mat{S}\mat{Z}_{\mat{V}}\mat{B} + \mat{S}_* \mat{VB},
     \ \ \mat{Z}_{\mat{U}}^T \mat{S} \mat{V} + \mat{U}\mat{S}\mat{Z}_{\mat{V}}  + \mat{U}^T \mat{S}_*  \mat{V}         ,\ \
      \mat{S}^T \mat{U}\mat{Z}_{\mat{B}} + \mat{S}^T\mat{Z}_{\mat{U}}\mat{B} + \mat{S}_*^T \mat{UB})
$ where $\mat{S}_* = \D_{(\mat{U}, \mat{B}, \mat{V})} \mat{S} [\mat{Z}] =  2( \mat{W} \odot (\mat{Z}_{\mat{U}}\mat{B}\mat{V}^T +  \mat{U} \mat{Z}_{\mat{B}} \mat{V}^T  + \mat{UB} \mat{Z}_{\mat{V}}^T   ))$ is the directional derivative of $\mat{S}$ along $\mat{Z}$. The Riemannian gradient and Hessian are computed using formulae developed in \change{(\ref{eq:riemannian_gradient}) and (\ref{eq:riemannian_hessian})}. Note that since $\mat{W}$ is sparse, $\mat{S}$ and $\mat{S}_*$ are sparse too. \change{As a consequence, the numerical complexity per iteration for the trust-region algorithm is of order $O(|\mathcal{W}|p + np^2 + mp^2 + p ^3)$ where $|\mathcal{W} |$ is the number of known entries. In addition computation of dominant singular value and vectors is performed with numerical complexity of $O(|\mathcal{W}|)$ \change{\cite{propack98a}}}. The \change{overall} linear complexity with respect to the number of known entries allows us to handle potentially very large datasets.

\subsection*{Fenchel dual and duality gap for matrix completion}
For the matrix completion problem, the sampling operation is the linear operator $\mathcal{A}(\mat{X}) = \mat{W} \odot \mat{X}$. We can, therefore, define a new function $\psi$ such that $f(\mat{X}) = \psi ( \mat{W} \odot \mat{X})$. \change{The domain of $\psi$ is the non-zero support of $\mat{W}$.} The dual candidate $\mat{M}$ is defined by
$
\mat{M} =  \min(1, \frac{\lambda }{\sigma_\psi}) \Grad \psi
$
 where $\Grad \psi (\mat{W} \odot \mat{X}) = 2 (\mat{W} \odot \mat{X} - \mat{W} \odot \widetilde{\mat{X}})$ and $\sigma_\psi$ is the dominant singular value of $\mathcal{A}^*(\Grad \psi)$ (refer Section \ref{sec:duality_gap} for details). In matrix form, $\mathcal{A}^*(\Grad \psi)$ \change{is written as} $\mat{W}\odot \Grad \psi$. Finally, the Fenchel dual $\psi^*$ at a dual candidate $\mat{M}$ can be computed is $\psi^*(\mat{M})  = \frac{\trace(\mat{M}^ T \mat{M})}{4} + \trace(\mat{M}^T (\mat{W} \odot \widetilde{\mat{X}}))$. The final expression for the duality gap at a point $\mat{X}$ and a dual candidate $\mat{M} =  \min(1, \frac{\lambda }{\sigma_\psi}) \Grad \psi$ is $f(\mat{X}) + \lambda \| \mat{X}\|_* + \frac{\trace(\mat{M}^ T \mat{M})}{4} + \trace(\mat{M}^T (\mat{W} \odot \widetilde{\mat{X}}))$.

\change{Next we provide some benchmark simulations for the low-rank matrix completion problem. For each example, a $ n \times m $ random matrix of rank $r$ is generated according to a Gaussian distribution with zero mean and unit standard deviation and a fraction of the entries are randomly removed with uniform probability. The dimensions of $ n \times m $ matrices of rank $r$ is $(n + m - r)r$. The over-sampling (OS) ratio determines the number of entries that are known. A over-sampling ratio of $6$ means that $6(n + m - r)r$ number of randomly and uniformly selected entries are known a priori out of $nm$ entries.}

\subsubsection{An example}\label{sec:an_example}
A $100\times 100$ random matrix of rank $10$ is generated as mentioned above. $20\%$ (${\rm OS} = 4.2$) of the entries are randomly removed with uniform probability. To reconstruct the original matrix we run the optimization scheme proposed in the Table \ref{tab:algorithm} along with the trust-region algorithm to solve the non-convex problem. For illustration purposes $\lambda$ is fixed at $1\times10^{-5}$. We also assume that we do not have any a priori knowledge of the optimal rank and, thus, start from rank $1$. The trust-region algorithm stops when the relative or absolute variation of the cost function falls below $1\times10^{-10}$. The rank-incrementing strategy stops when relative duality gap is less than $1\times10^{-5}$, i.e., $\frac{ f (\mat{X}) + \lambda \| \mat{X}\|_* + \psi^*(\mat{M})  }{|\psi^*(\mat{M})|} \leq 1\times 10^{-5}$. Convergence plots of the scheme are shown in Figure \ref{fig:matrix_completion_example}. 
\begin{figure*}[ht]
\begin{minipage}{0.5\textwidth}
\includegraphics[scale = .30]{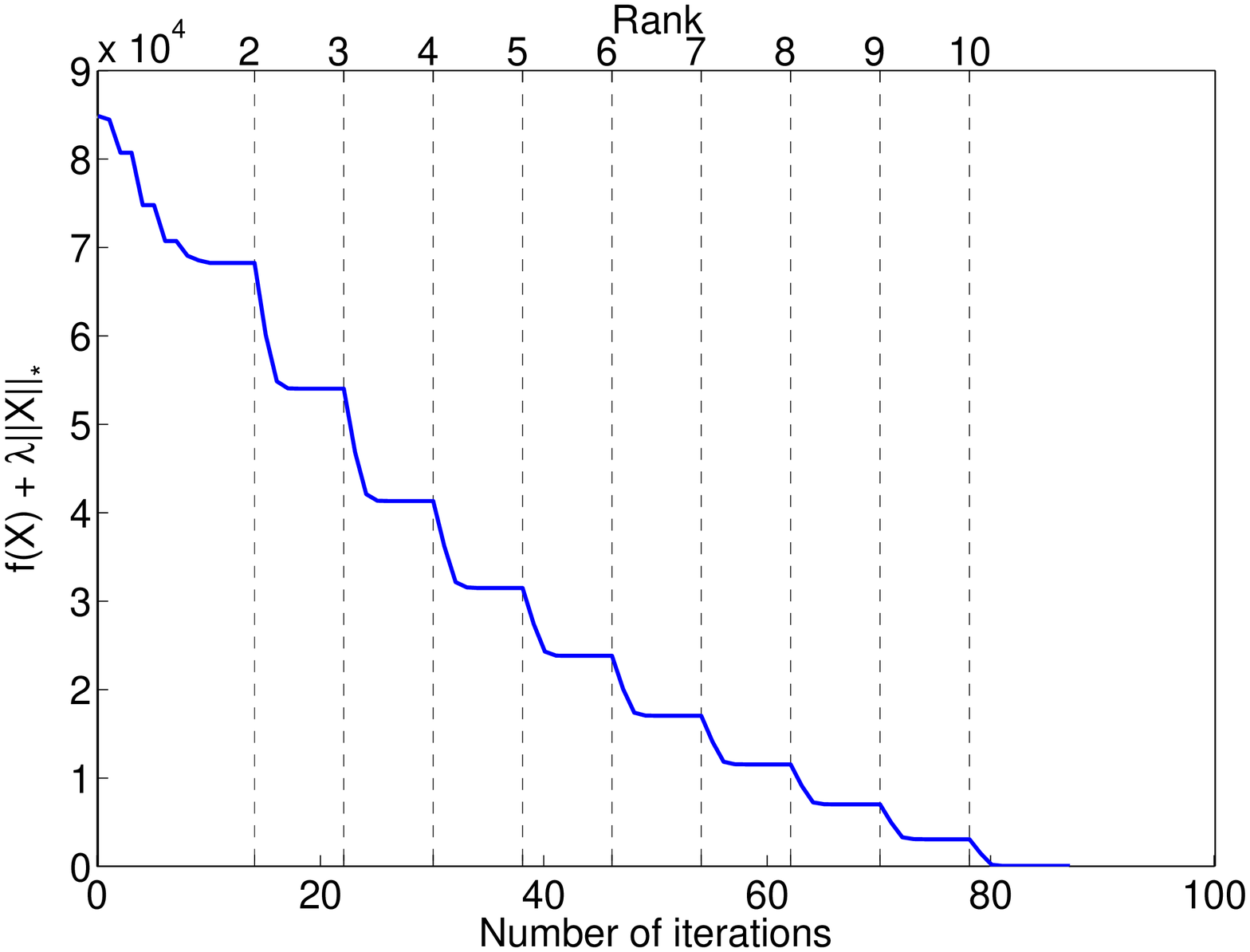}
\end{minipage}
\begin{minipage}{0.5\textwidth}
\includegraphics[scale = .30]{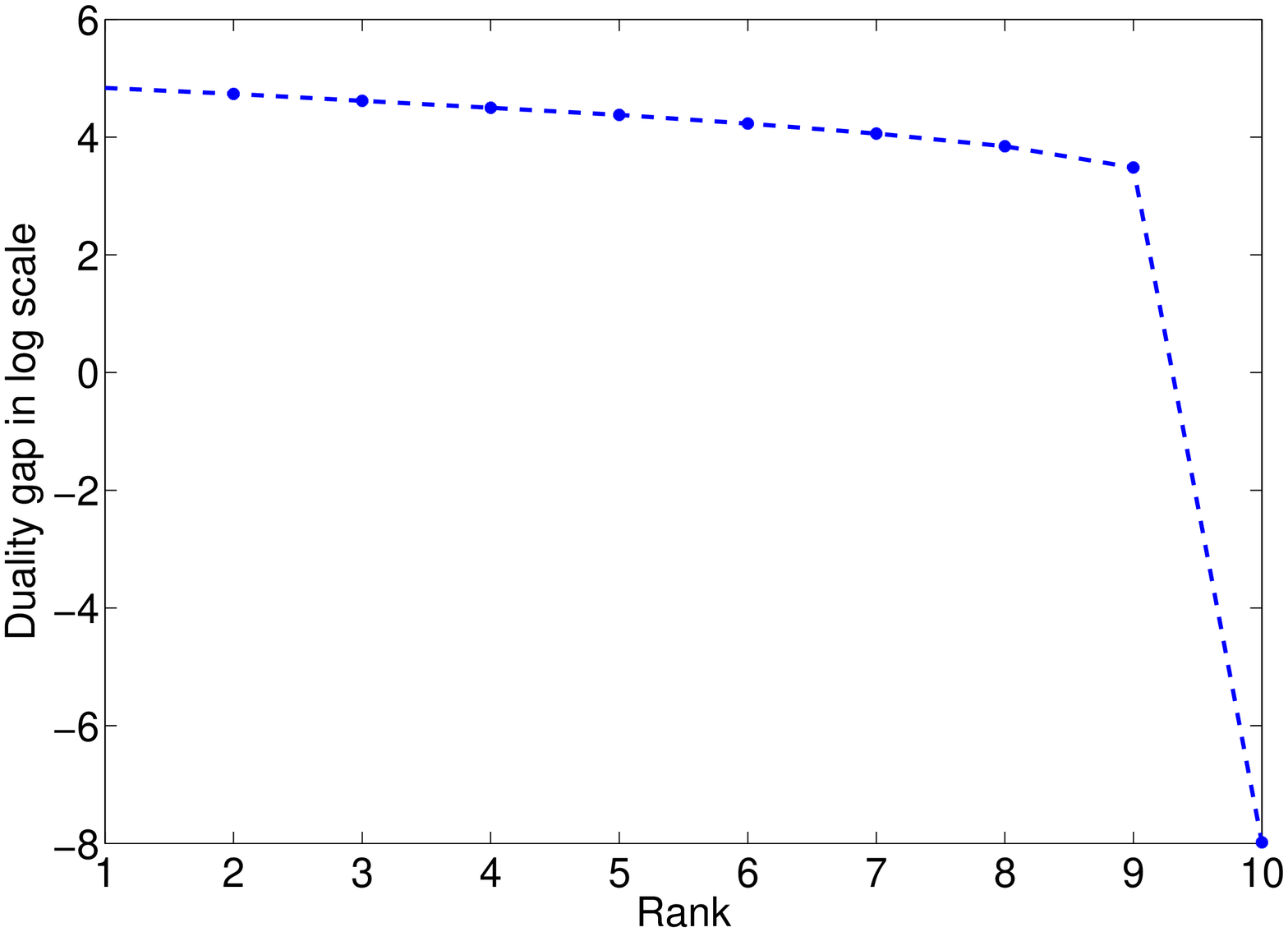}
\end{minipage}
\begin{minipage}{0.5\textwidth}
    \includegraphics[scale = .30]{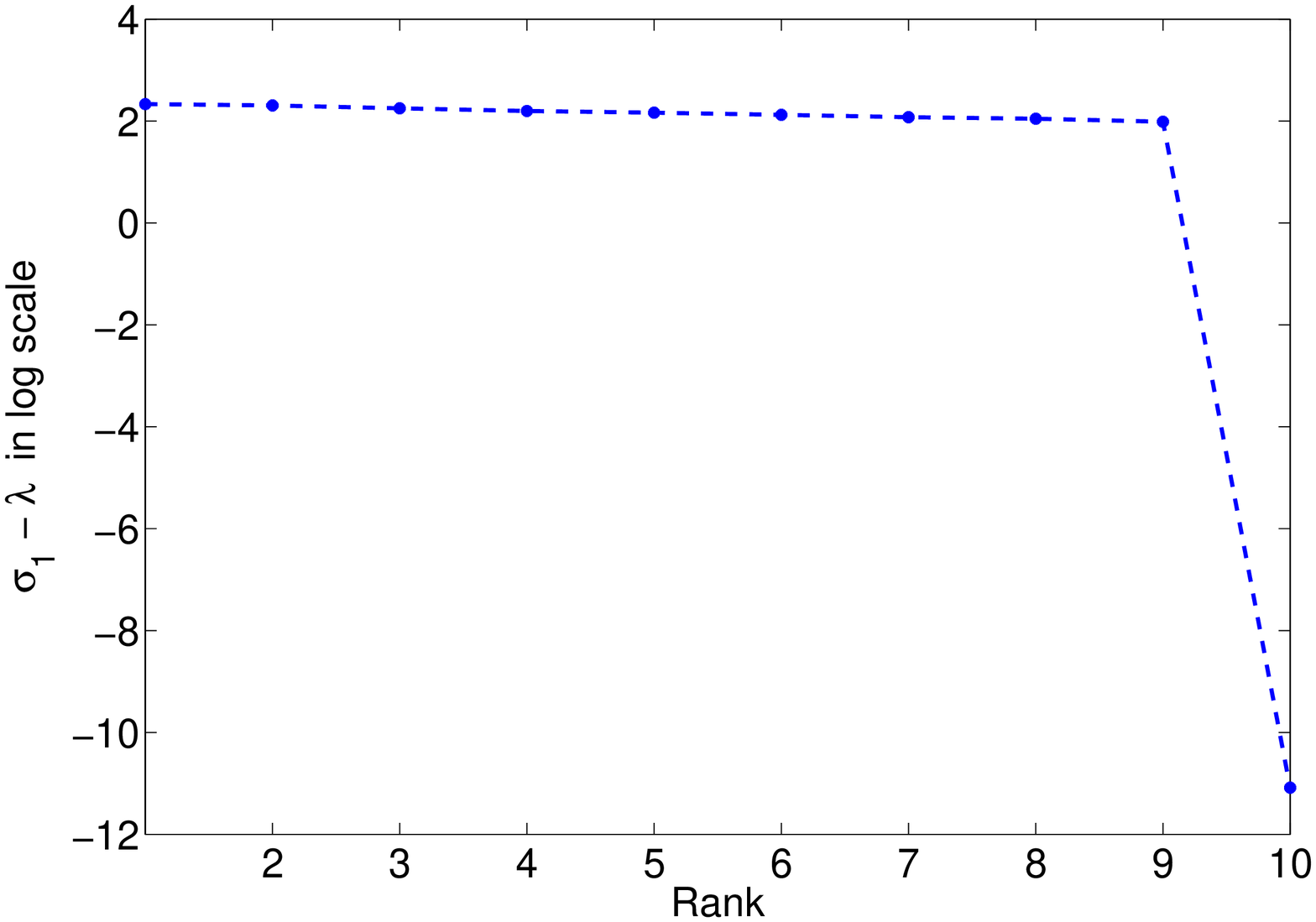}
\end{minipage}
\begin{minipage}{\textwidth}
   \scriptsize
   \begin{tabular}{|c|c|c|c|c|c|} \hline   
      Rel. error of reconstruction & $ \ $ \\
        $\| \widetilde{\mat{X}}  - \mat{X}^* \|_F / \| \widetilde{\mat{X}} \|_F  $      & $6.86 \times 10^{-8}$      \\ \hline
      Recovered rank & $10$    \\ \hline 
      Duality gap & $ 1.04 \times 10^{-8}$ \\ \hline
      $\sigma_1 - \lambda$ & $8.27 \times 10^{-12}$ \\ \hline
       Iterations & $88$  \\ \hline     
   \end{tabular}
\end{minipage} 
\caption{Matrix completion by trace norm minimization algorithm with $\lambda = 1\times 10^{-5}$. Upper left: Rank incremental strategy with descent directions. Upper right: Optimality certificate of the solution with duality gap. Lower left: Convergence to the global solution according to Proposition \ref{prop:convergence} . Lower right: Recovery of the original low-rank matrix.}
\label{fig:matrix_completion_example}
\end{figure*}
A good way to characterize matrix reconstruction at $\mat{X}$ is to look at the relative error of reconstruction, defined as,
\[
\mathrm{Rel.\  error\  of \ reconstruction } =  \| \widetilde{\mat{X}}  - \mat{X} \|_F / \| \widetilde{\mat{X}} \|_F.  
\]
Next, to understand low-rank matrix reconstruction by trace norm minimization we repeat the experiment for a number of values of $\lambda$ all initialized from the same starting point and report the relative reconstruction error in Table \ref{tab:matrix_completion_vary_lambda} averaged over $5$ runs. This, indeed, confirms that matrix reconstruction is possible by solving the trace norm minimization problem (\ref{eq:matrix_completion}).

\begin{table}
\begin{center} \scriptsize
\begin{tabular}{|c|c|c|c|c|c|c|c|c|c|c|c|c|c|c|} \hline  
$\lambda$ & $10$ & $10^{-2}$ & $10^{-5}$ & $10^{-8}$  \\ \hline
Rel. reconstruction error&  $6.33 \times  10^{-2}$   & $7.42 \times  10^{-5}$  &  $7.11 \times  10^{-8}$  & $6.89 \times  10^{-11}$  \\ \hline
Recovered rank & $10$  &  $10$ & $10$  & $10$  \\ \hline
Iterations & $113$  & $120$     & $119$ &   $123$             \\ \hline
Time in seconds & $ 2.7$  & $2.8$ &$2.9$ & $2.9$        \\ \hline
\end{tabular}
\end{center} 
\caption{Efficacy of trace norm penalization to reconstruct low-rank matrices by solving (\ref{eq:matrix_completion}). } 
\label{tab:matrix_completion_vary_lambda} 
\end{table}

\subsubsection{Regularization path for matrix completion}
In order to compute the entire regularization path, we employ the predictor-corrector approach described in Table \ref{tab:predictor_corrector} to find solutions for a grid of $\lambda$ values. For the purpose of illustration, a geometric sequence of $\lambda$ values is created with the maximum value fixed at $\lambda_{1} = 1\times10^{3}$, the minimum value is set at $\lambda_{N} = 1\times 10^{-3}$ and a reduction factor $\gamma = 0.95$ such that $\lambda_{i+1} = \gamma \lambda_i$. \change{We consider the same example as in Section \ref{sec:an_example}.} The algorithm for a $\lambda_{i} \in \{ \lambda_{1}, ... , \lambda_{N}\}$ stops when the relative duality gap falls below $1\times 10^{-5}$. Various plots are shown in Figure \ref{fig:matrix_completion_regpath_example}. Figure \ref{fig:matrix_completion_regpath_example} also demonstrates the advantage of the scheme in Table \ref{tab:predictor_corrector} with respect to a warm-restart approach. We compare both approaches on the basis of
\begin{equation}\label{eq:prediction}
{\rm{Inaccuracy\  in\  prediction}} = \bar{\phi}(\hat{\mat{X}}(\lambda_i)) -  \bar{\phi}(\mat{X}^*(\lambda_i))
\end{equation}
where $\mat{X}^*(\lambda_i)$ is the global minimum at $\lambda_i$ and $\hat{\mat{X}}(\lambda_i)$ is the prediction. A lower inaccuracy means better prediction. 
\begin{figure*}[ht]
\begin{minipage}{0.5\textwidth}
\includegraphics[scale = .30]{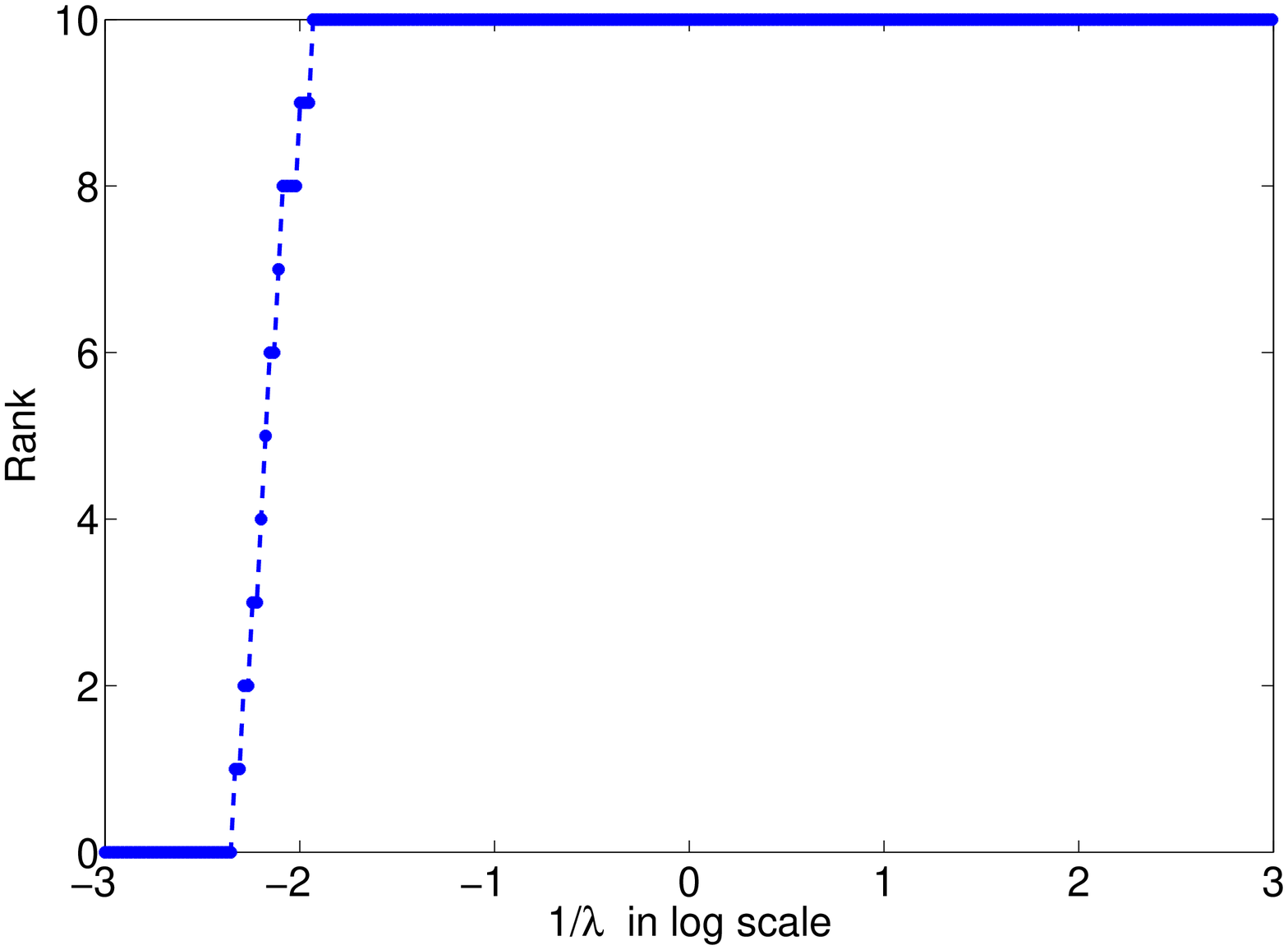}
\end{minipage}
\begin{minipage}{0.5\textwidth}
\includegraphics[scale = .30]{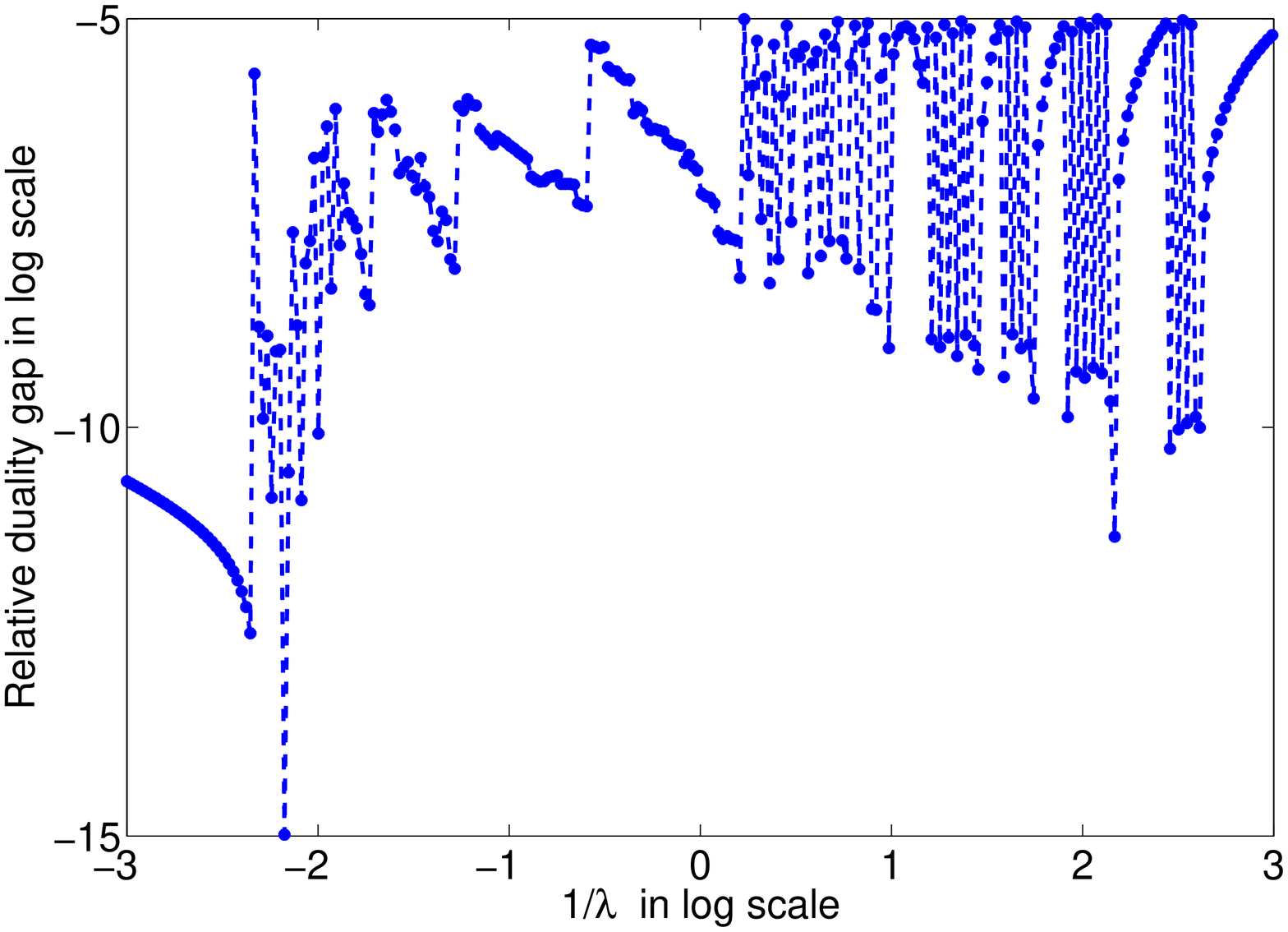}
\end{minipage}
\begin{minipage}{0.5\textwidth}
    \includegraphics[scale = .30]{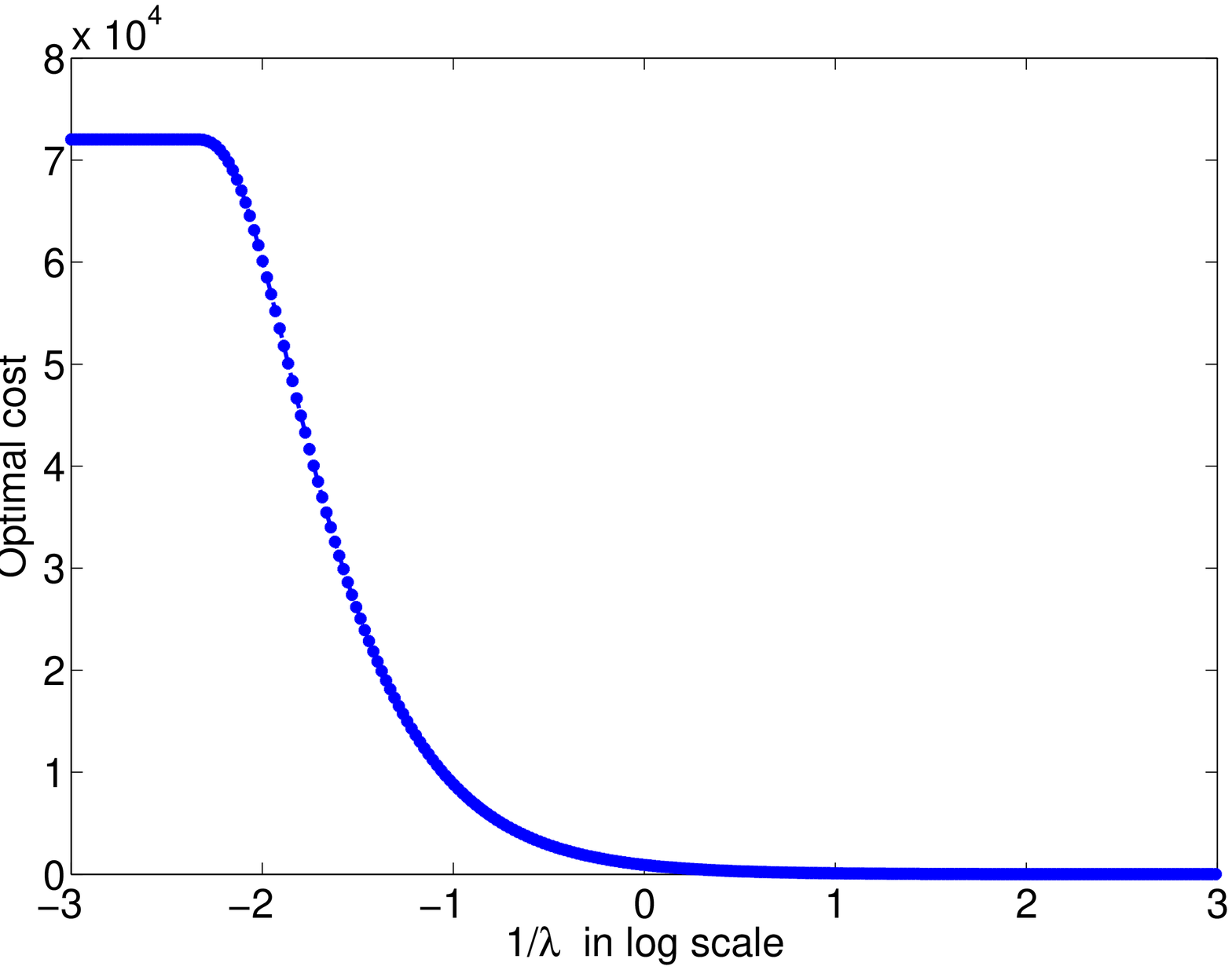}
\end{minipage}
\begin{minipage}{0.5\textwidth}
    \includegraphics[scale = .30]{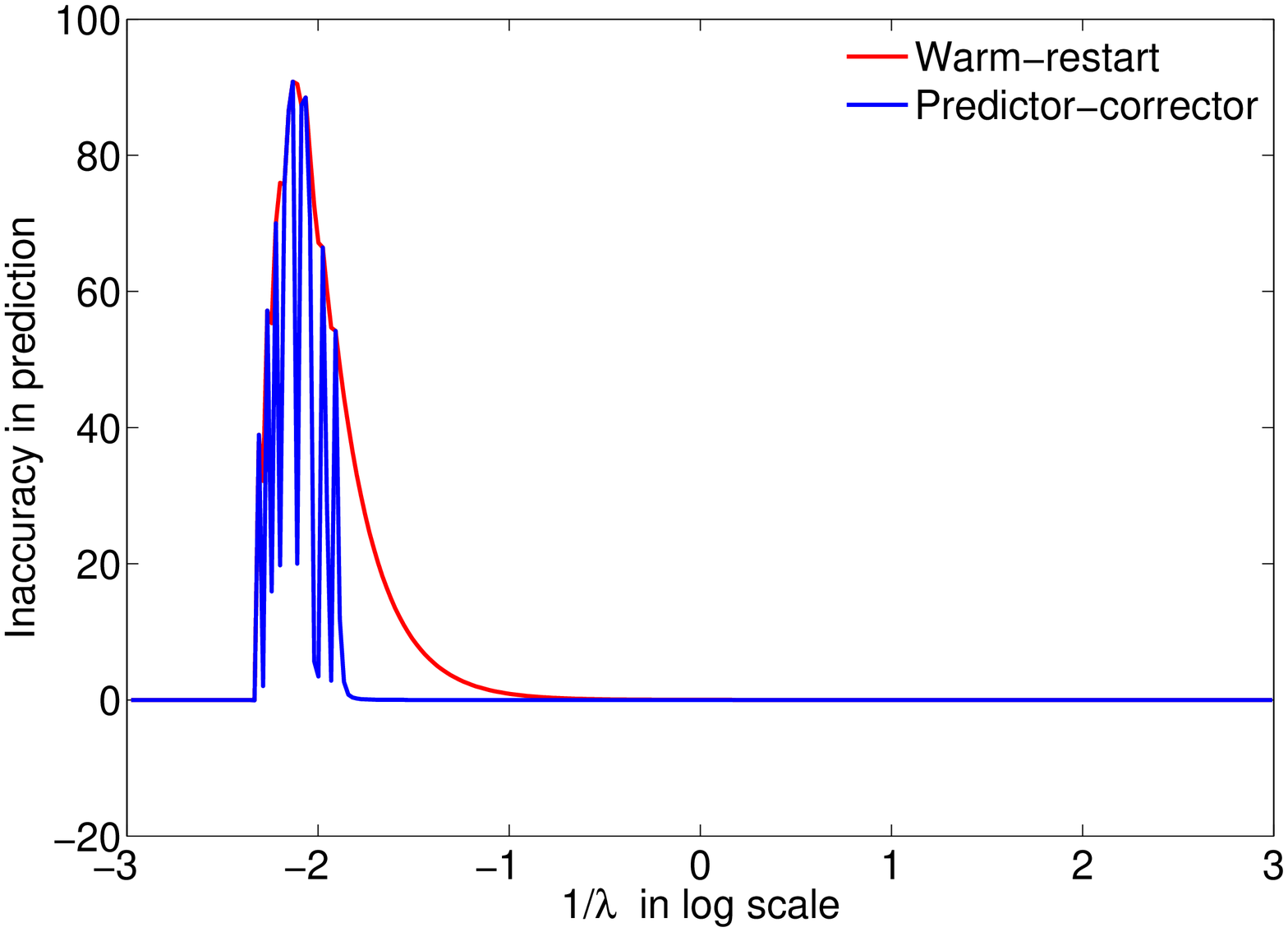}
\end{minipage}
\center
\begin{minipage}{0.25\textwidth}  
\scriptsize
   \begin{tabular}{|c|c|c|c|c|c|} \hline   
       \# $\lambda$ values & $270$      \\ \hline
      \# iterations & $766$    \\ \hline 
      Time & $38.60$ seconds \\ \hline     
   \end{tabular}
\end{minipage}
\caption{Computation of entire regularization path using Descent-restart + TR with a predictor-corrector approach. Upper left: Recovery of solutions of all ranks. Upper right: Optimality certificate for the regularization path. Lower left: Path traced by the algorithm. Lower right: Better prediction by the algorithm in Table \ref{tab:predictor_corrector} than a pure warm-restart approach. Table: Number of iterations per value of $\lambda$ is $ < 3$.}
\label{fig:matrix_completion_regpath_example}
\end{figure*}
It should be emphasized that in Figure \ref{fig:matrix_completion_example} most of the points on the curve of the objective function have no other utility than being intermediate iterates towards the global solution of the algorithm. In contrast all the points of the curve of optimal cost values in  Figure \ref{fig:matrix_completion_regpath_example} are now global minima for different values of $\lambda$.

\subsubsection{Competing methods for matrix completion}
In this section, we analyze the following state-of-the-art algorithms for low-rank matrix completion, namely, 
\begin{enumerate}
\item{SVT algorithm by Cai et al. \cite{cai10a}}
\item{FPCA algorithm by Ma et al. \cite{ma11a}}
\item{SOFT-IMPUTE (Soft-I) algorithm by Mazumder et al. \cite{mazumder10a}}
\item{\change{ APG and APGL algorithms by Toh et al. \cite{toh10a}}}
\end{enumerate}
While FPCA, SOFT-IMPUTE and APGL solve (\ref{eq:matrix_completion}), the iterates of SVT converge towards a solution of the optimization problem
\[
\begin{array}{ll}
\min\limits_{\mat X} & \tau \|\mat{X} \|_* +\frac{1}{2} \|\mat{X} \|_F^2\\
\subject & \mat{W}\odot \mat{X} = \mat{W} \odot \widetilde{\mat{X}}
\end{array}
\]
where $\tau$ is a regularization parameter. For simulation studies we use the MATLAB codes supplied on the authors' webpages for SVT, FPCA and APGL. Due to simplicity of the SOFT-IMPUTE algorithm we use our own MATLAB implementation. The numerically expensive step in all these algorithms is the computation of the \emph{singular value thresholding} operation. To reduce the computational burden FPCA uses a linear time approximate singular value decomposition (SVD). \change{Likewise, implementations of SVT, SOFT-IMPUTE and APGL exploit the low-rank + sparse structure of the iterates to optimize the thresholding operation \cite{propack98a}.} 

\change{The basic algorithm FPCA by Ma et al. \cite{ma11a} is a fixed-point algorithm with a proven bound on the iterations for $\epsilon-$accuracy. To accelerate the convergence they use the technique of \emph{continuation} that involves approximately solving a sequence of parameters leading to the target $\lambda$. The singular value thresholding burden step is carried out by a linear time approximate SVD.} \change{The basic algorithm APG of Toh et al. is a proximal method \cite{nesterov03a} and gives a much stronger bound $O(\frac{1}{\sqrt{\epsilon}})$ on the number of iterations for $\epsilon$ accuracy. To accelerate the scheme, the authors propose three additional heuristics: continuation, truncation (hard-thresholding of ranks by projecting onto fixed-rank matrices) and a line-search technique for estimating the Lipschitz constant. The accelerated version is called APGL.} \change{The basic algorithm SOFT-IMPUTE iteratively replaces the missing elements with those given by an approximate SVD thresholding at each iteration. Accelerated versions involve post processing like continuation and truncation. It should be emphasized that the performance of SOFT-IMPUTE greatly varies with the singular values computation at each iteration. For our simulations we compute $20$ dominant singular values at each iteration of SOFT-IMPUTE.}

\subsection*{Convergence behavior with varying $\mat{\lambda}$}
In this section we analyze the algorithms FPCA, SOFT-IMPUTE and Descent-restart + TR in terms of their ability to solve (\ref{eq:matrix_completion}) for a fixed value of $\lambda$. \change{For this simulation, we use FPCA, SOFT-IMPUTE and APGL without any acceleration techniques like continuation and truncation.} SVT is not used for this test since it optimizes a different cost function. We plot the objective function $f(\mat{X}) + \lambda\| \mat{X}\|_*$ against the number of iterations for a number of $\lambda$ values. \change{A $100 \times 100$ random matrix of rank $5$ is generated under standard assumptions with over-sampling ratio ${\rm OS} = 4$ ($61\%$ of entries are removed uniformly).} The algorithms Descent-restart + TR, FPCA and SOFT-IMPUTE and APG are initialized from the same point. The algorithms are stopped when either the variation or relative variation of $f(\mat{X}) + \lambda \| \mat{X}\|_*$ is less than $1\times 10^{-10}$. The maximum number of iterations is set to$500$. The rank incrementing procedure of our algorithm is stopped when the relative duality gap falls below $1\times 10^{-5}$.

%
%
%
%
%
\begin{figure*}[ht]
\begin{minipage}{0.5\textwidth}
\includegraphics[scale = 0.30]{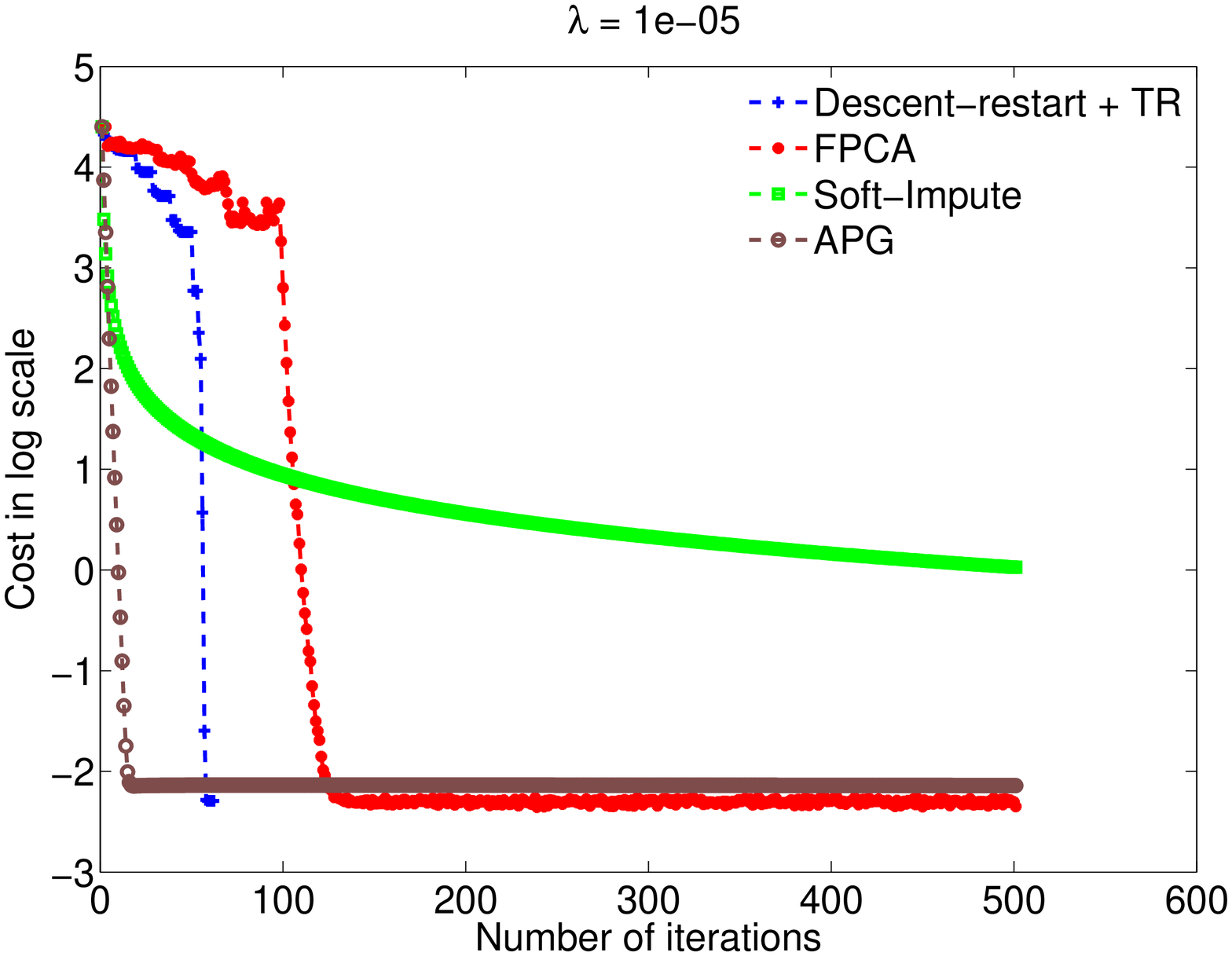}
\end{minipage}
\begin{minipage}{0.5\textwidth}
\includegraphics[scale = 0.30]{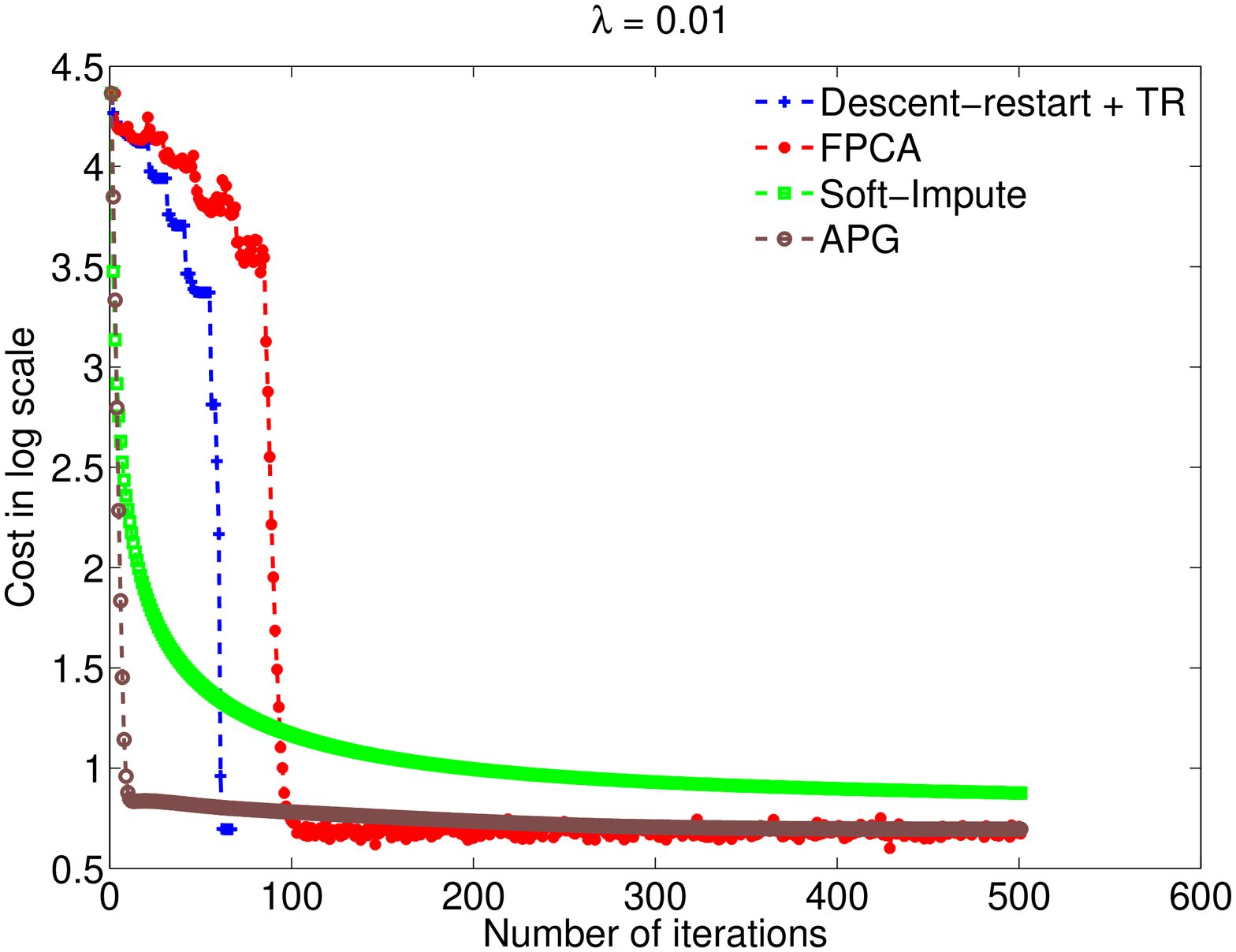}
\end{minipage}
\begin{minipage}{0.5\textwidth}
    \includegraphics[scale = 0.30]{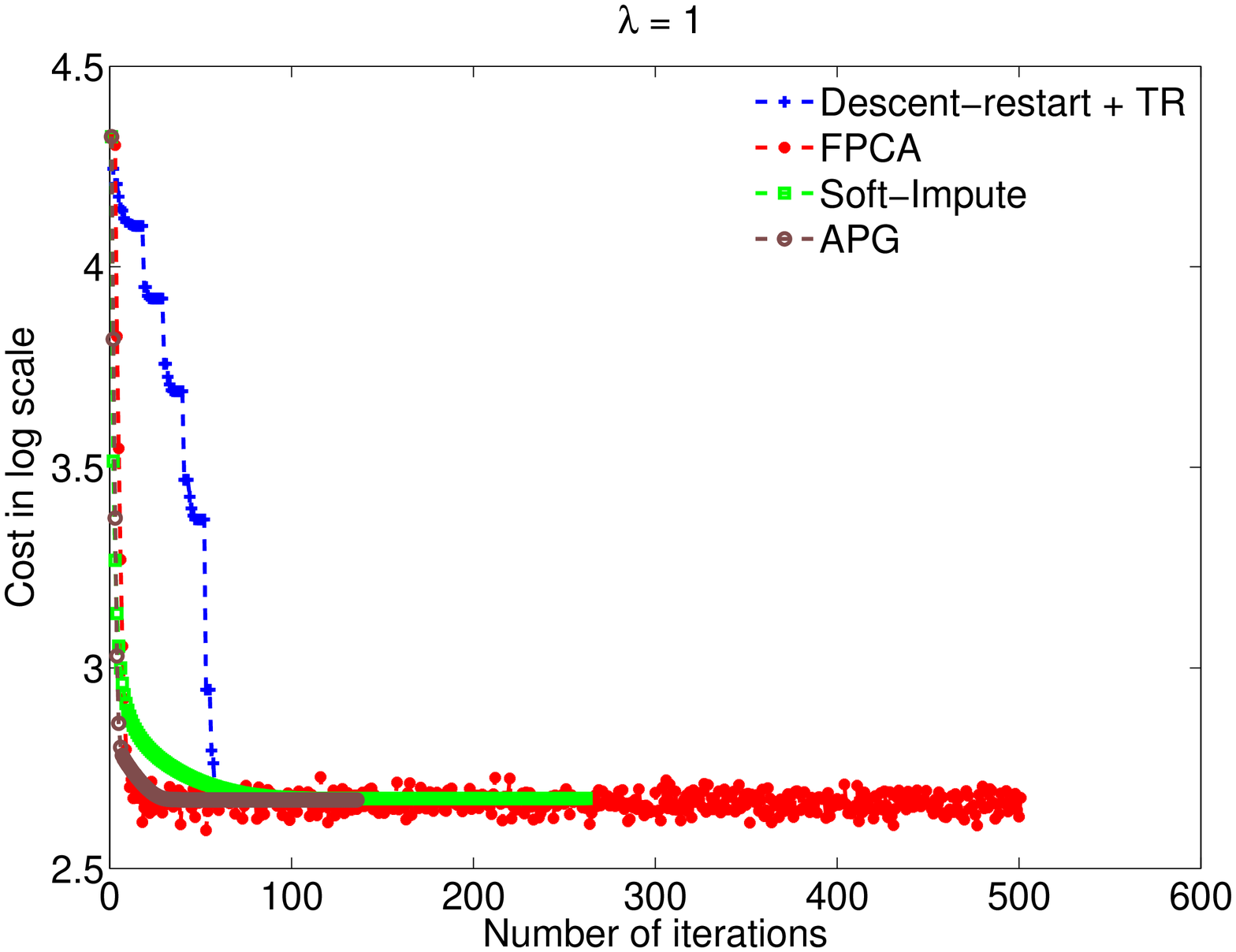}
\end{minipage}
\begin{minipage}{0.5\textwidth}
    \includegraphics[scale = 0.30]{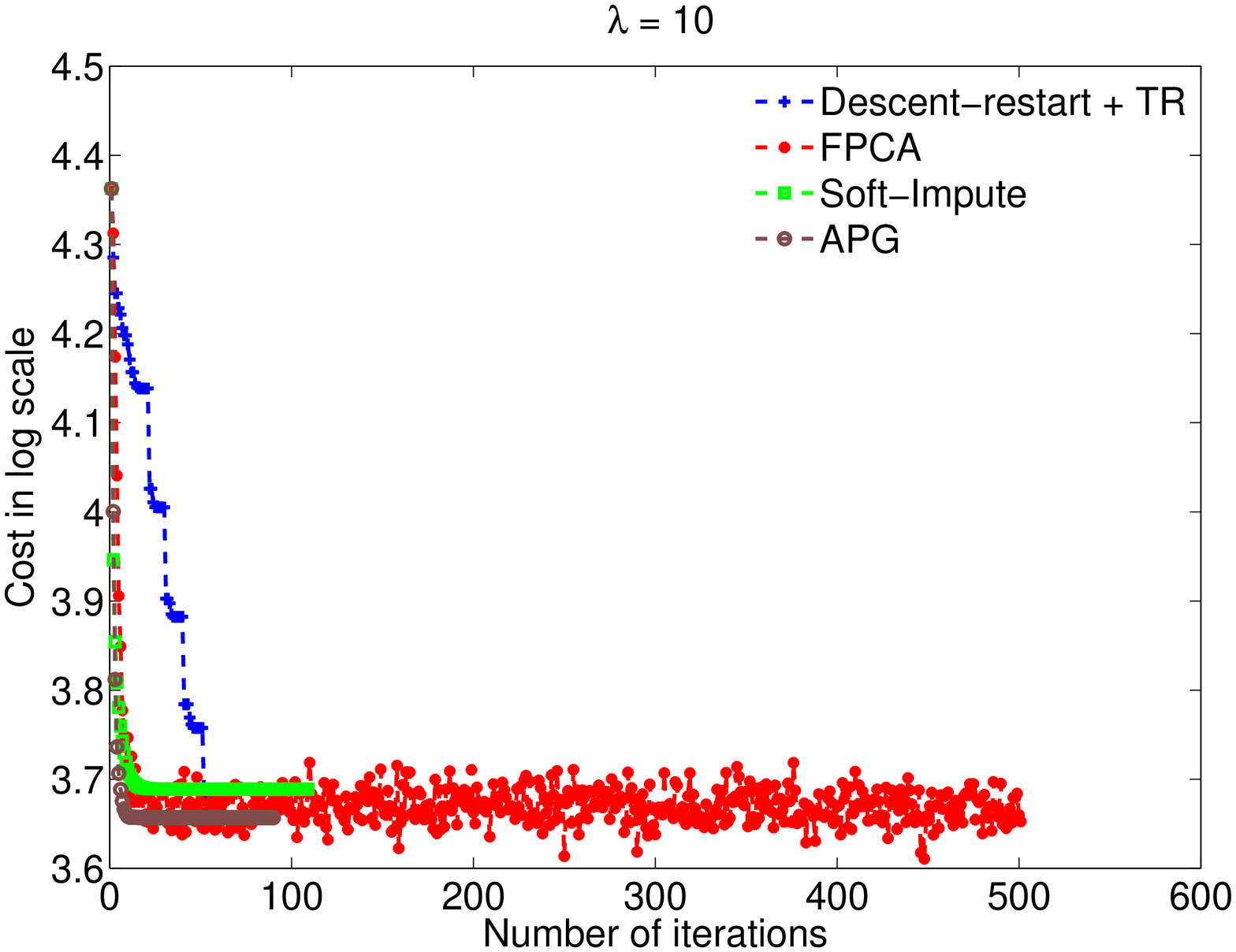}
\end{minipage}
\caption{Convergence behavior of different algorithms for different values of $\lambda$. The algorithms compared here do not use any acceleration heuristics.}
\label{fig:matrix_completion_cost_example}
\end{figure*}
The plots are shown in Figure \ref{fig:matrix_completion_cost_example}. The convergence behavior of FPCA is greatly affected by $\lambda$. It has a slow convergence for a small $\lambda$ while for a larger $\lambda$, the algorithm fluctuates. SOFT-IMPUTE has a better convergence in all the three cases, however, the convergence suffers when a more accurate solution is sought. \change{The performance of APG is robust to the change in values of $\lambda$. For moderate accuracy it outperforms all other algorithms. However, when a higher accuracy is sought it takes a large number of iterations. Descent-restart + TR, on the other hand, outperforms others in all the cases here with minimal number of iterations.}

\subsubsection*{Convergence test}
To understand the convergence behavior of different algorithms involving different optimization problems, we look at the evolution of the training error \cite{cai10a, mazumder10a} defined as
\begin{equation}\label{eq:training_error}
\mathrm{Training\ error} =  \| \mat{W}\odot (  \widetilde{\mat{X}}  -  \mat{X}   ) \|_F ^2,
\end{equation}
with iterations. We generate a $150 \times 300$ random matrix of rank $10$ under standard assumptions. \change{The over-sampling ratio is kept at $5$ with slightly less $50\%$ of the entries being observed.} The algorithms Descent-restart + TR, FPCA and SOFT-IMPUTE (Soft-I) are initialized from the same iterate with a fixed $\lambda$. We fix $\lambda = 1\times 10^{-5}$ as it gives a good reconstruction to compare algorithms. For SVT we use the initialization including $\tau= 5\sqrt{nm}$ and a step size of $ \frac{1.2}{f}$ as suggested in the paper \cite{cai10a} where $f$ is the fraction of known entries. The algorithms are stopped when the variation or relative variation of Training error (\ref{eq:training_error}) is less than $1\times 10^{-10}$. The maximum number of iterations is set at $500$. The rank incrementing procedure of our algorithm is stopped when the relative duality gap falls below $1\times 10^{-5}$.

\change{APG has a fast convergence but the performance slows down later. Consequently, it exceeds the maximum limit of iterations. Similarly, SOFT-IMPUTE converges to a different solution but has a faster convergence in the initial phase (for iterations less than $60$). FPCA and Descent-restart + TR converge faster at a later stage of their iterations. Descent-restart + TR initially sweeps through ranks until arriving at the optimal rank where the convergence is accelerated owing to the trust-region algorithm.}  
%
%
%
\begin{figure*}[ht]
\center
\includegraphics[scale = 0.30]{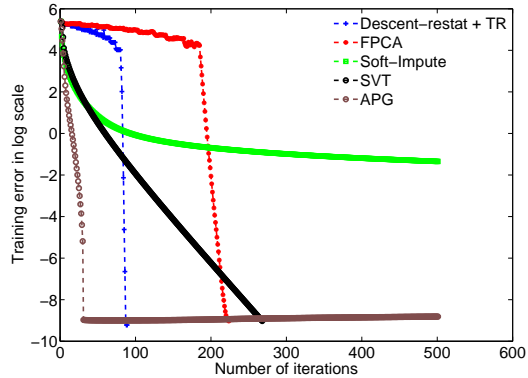}
\caption{Convergence behavior of different algorithms in terms of minimizing Training error (\ref{eq:training_error}).}
\label{fig:matrix_completion_train_error_example}
\end{figure*}

\subsubsection*{Scaling test}
To analyze the scalability of these algorithms to larger problems we perform a test where we vary the problem size $n$ from $200$ to $2200$. \change{The reason for choosing a moderate value of $n$ is that large-scale implementations of SVT, FPCA and Soft-Impute are unavailable from authors' webpages.} For each $n$, we generate a random matrix of size $n \times n$ of rank $r = 10$ under standard assumptions. Each entry is observed with uniform probability of $f = \frac{4r\rm{log}_{10}(n)}{n}$ \cite{keshavan09b}. The initializations are chosen as in the earlier example i.e., $\lambda = 10^{-5}$. We note the time and number of iterations taken by the algorithms until the stopping  criterion is satisfied or when the number of iterations exceed $500$. The stopping criterion is same as the one used before for comparison, when the absolute variation or relative variation of Training error (\ref{eq:training_error}) is less than $ 10^{-10}$. Results averaged over $5$ runs are shown in Figure \ref{fig:matrix_completion_scaling_example}. \change{We have not shown the plots for SOFT-IMPUTE and APG as in all the cases  either they did not converge in $500$ iterations or took much more time than the nearest competitor. }
\begin{figure*}[ht]
\begin{minipage}{0.5\textwidth}
\includegraphics[scale = 0.30]{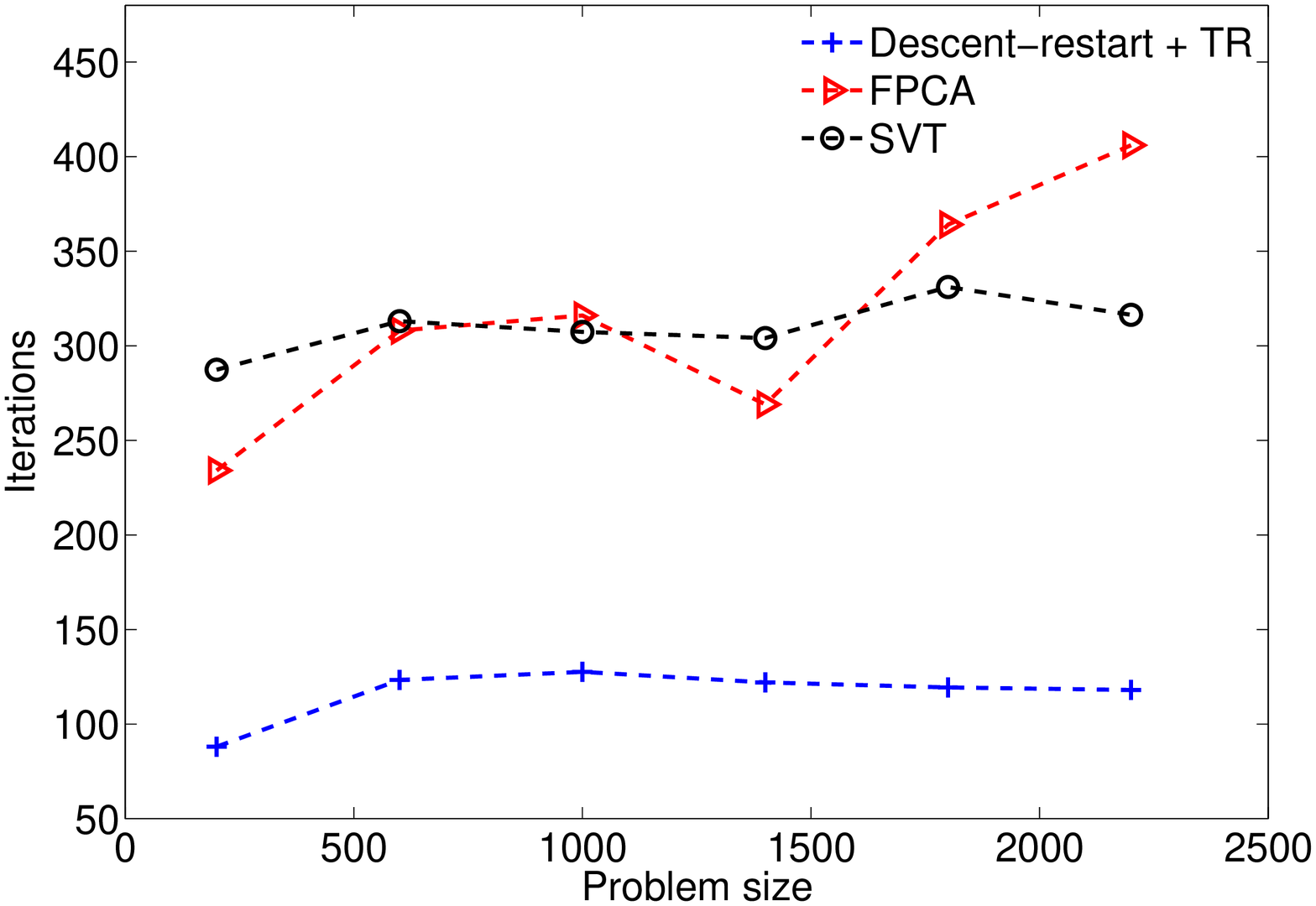}
\end{minipage}
\begin{minipage}{0.5\textwidth}
\includegraphics[scale = 0.30]{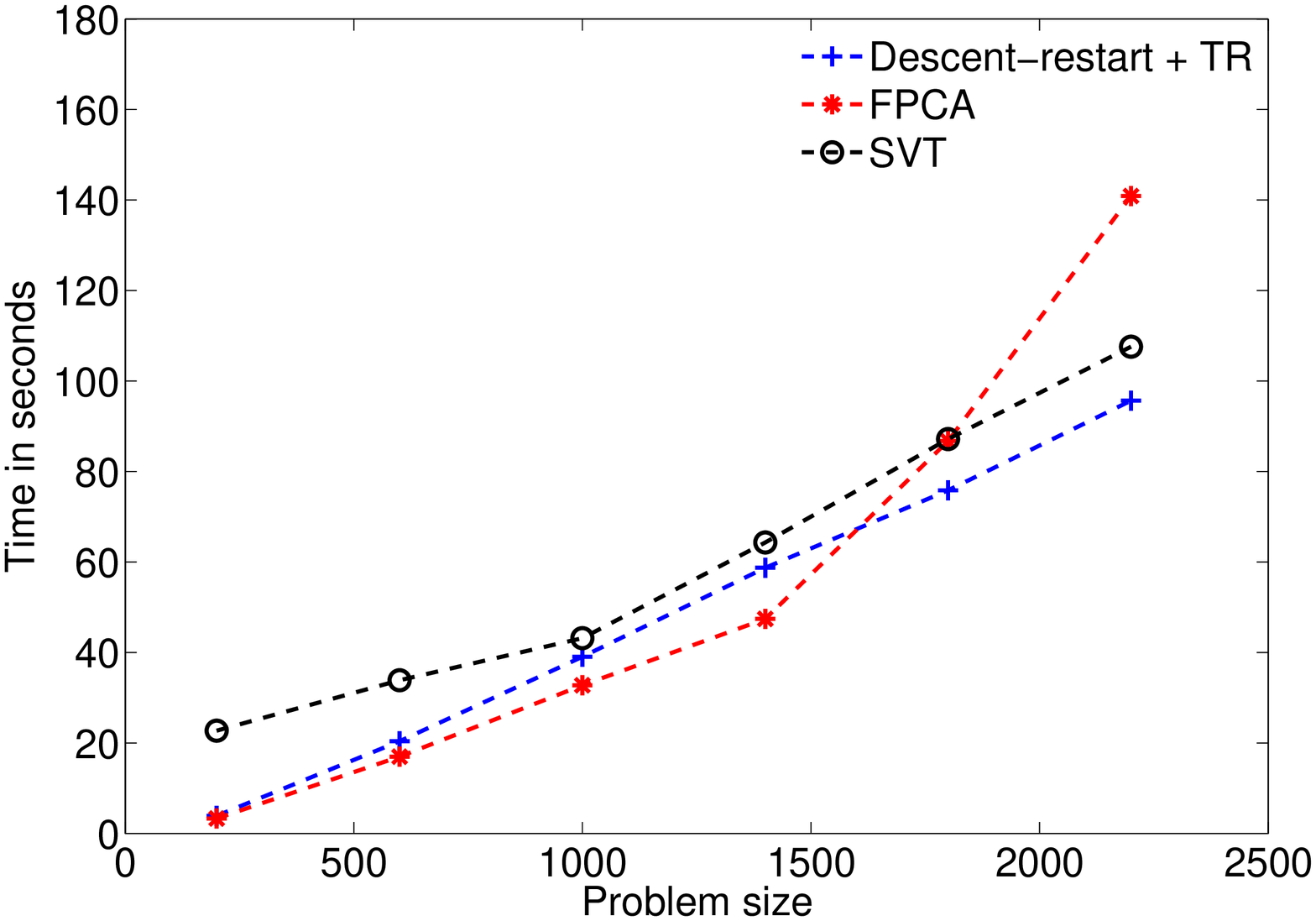}
\end{minipage}

\begin{minipage}{0.25\textwidth}  
\centering
   \scriptsize
   \begin{tabular}{|c|c|c|c|c|c|c|} \hline   
      $n$ &$200$ & $600$ & $1000$ &$1400$ & $1800$ & $2200$      \\ \hline
      $|\mathcal{W}|$ & $18409 $ & $ 66676  $ &$ 120000  $ & $176184$ & $234380$ & $294134$   \\ \hline 
      $f$ &$0.46$ & $0.19$ & $0.12$ &$0.09 $ & $0.07$ & $0.06$  \\ \hline
      $\rm{OS}$ &$4.7$ & $5.6$ & $6.0$ &$6.3$ & $6.5$ & $6.7$  \\ \hline
   \end{tabular}
\end{minipage}
\caption{Analysis of the algorithms on randomly generated datasets of rank $10$ with varying fractions of missing entries. SVT, FPCA and Descent-restart + TR have similar performances but Descent-restart + TR usually outperforms others.}
\label{fig:matrix_completion_scaling_example}
\end{figure*}

Below we have shown two more case studies where we intend to show the numerical scalability of our framework to a large scale instance. The first one involves comparisons with fixed-rank optimization algorithms. The second case is a large scale comparison with APGL (the accelerated version of APG). We consider the problem of completing a $50000 \times 50000$ matrix $\widetilde{\mat{X}}$ of rank $5$. The over-sampling ratio OS is $8$ implying that  $0.16\%$ ($3.99\times {10}^6$) of entries are randomly and uniformly revealed. The maximum number of iterations is fixed at $500$.

\subsubsection*{Fixed-rank comparison}
\change{Because our algorithm uses a fixed-rank approach for the fixed-rank sub-problem, it is also meaningful to compare its performance with other fixed-rank optimization algorithms. However, a rigorous comparison with other algorithms is beyond the scope of the present paper. We refer to a recent paper \cite{mishra12a} that deals with this question in a broader framework. Here we compare with two set-of-the-art algorithms that are LMaFit \cite{wen12a} and LRGeom (trust-region implementation) \cite{vandereycken13a}. LMaFit is an alternating minimization scheme with a different factorization for a fixed-rank matrix. It is a tuned-version of the Gauss-Seidel non-linear scheme and has a superior time complexity per iteration. LRGeom is based on the embedded geometry of fixed-rank matrices. This viewpoint allows to simplify notions of moving on the search space. We use their trust-region implementation. The geometry leads to efficient guess of the optimal stepsize in a search direction.} \change{Plots in Figure \ref{fig:ubv_vs_lmafit} show a competitive performance of our trust-region scheme with respect to LMaFit. Asymptotically, both our trust-region scheme and LRGeom perform similarly but LRGeom performs much better in the initial few iterations}.
\begin{figure}[ht]
\begin{minipage}{0.5\textwidth}
\includegraphics[scale = 0.30]{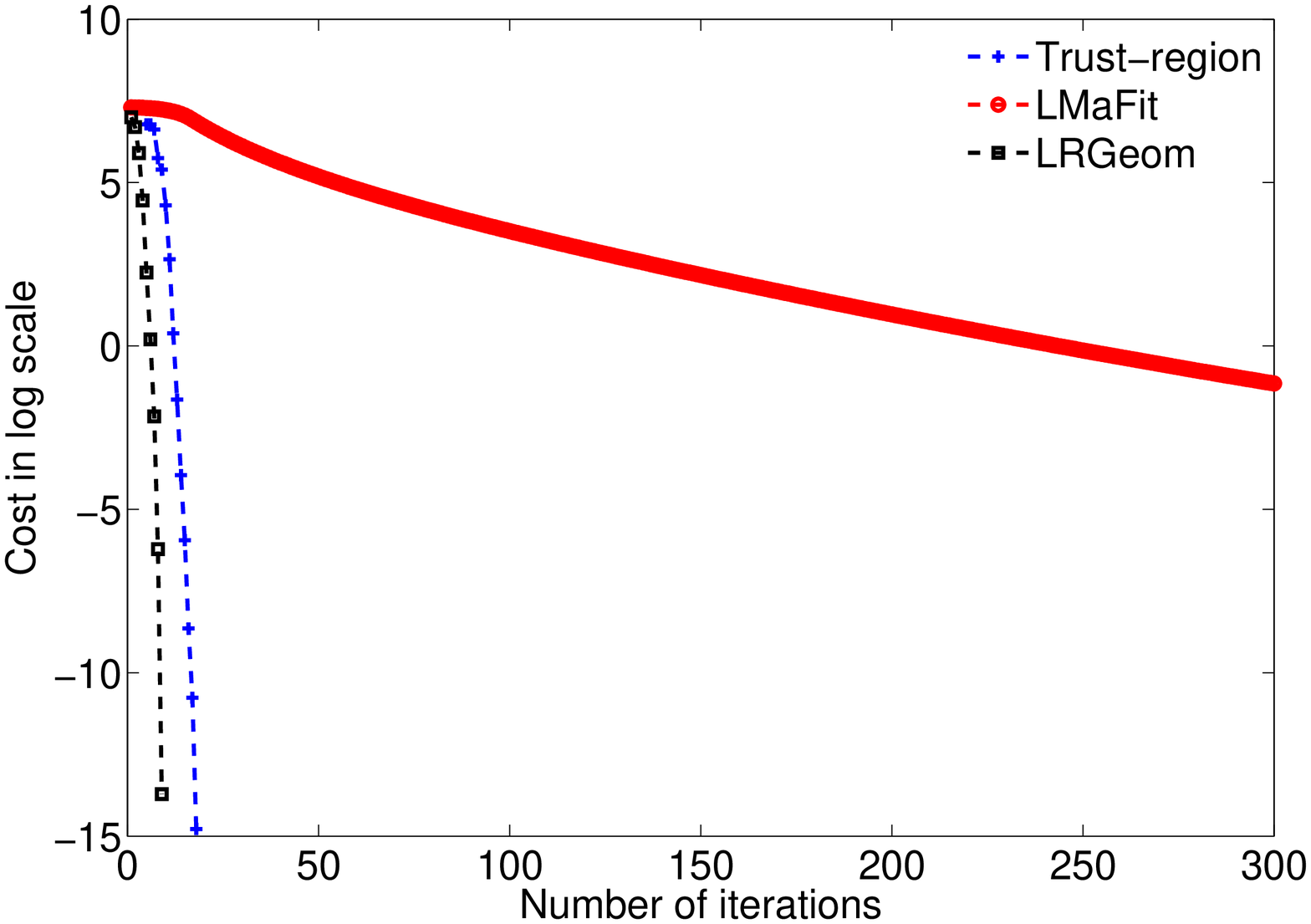}
\end{minipage}
\begin{minipage}{0.5\textwidth}
\includegraphics[scale = 0.30]{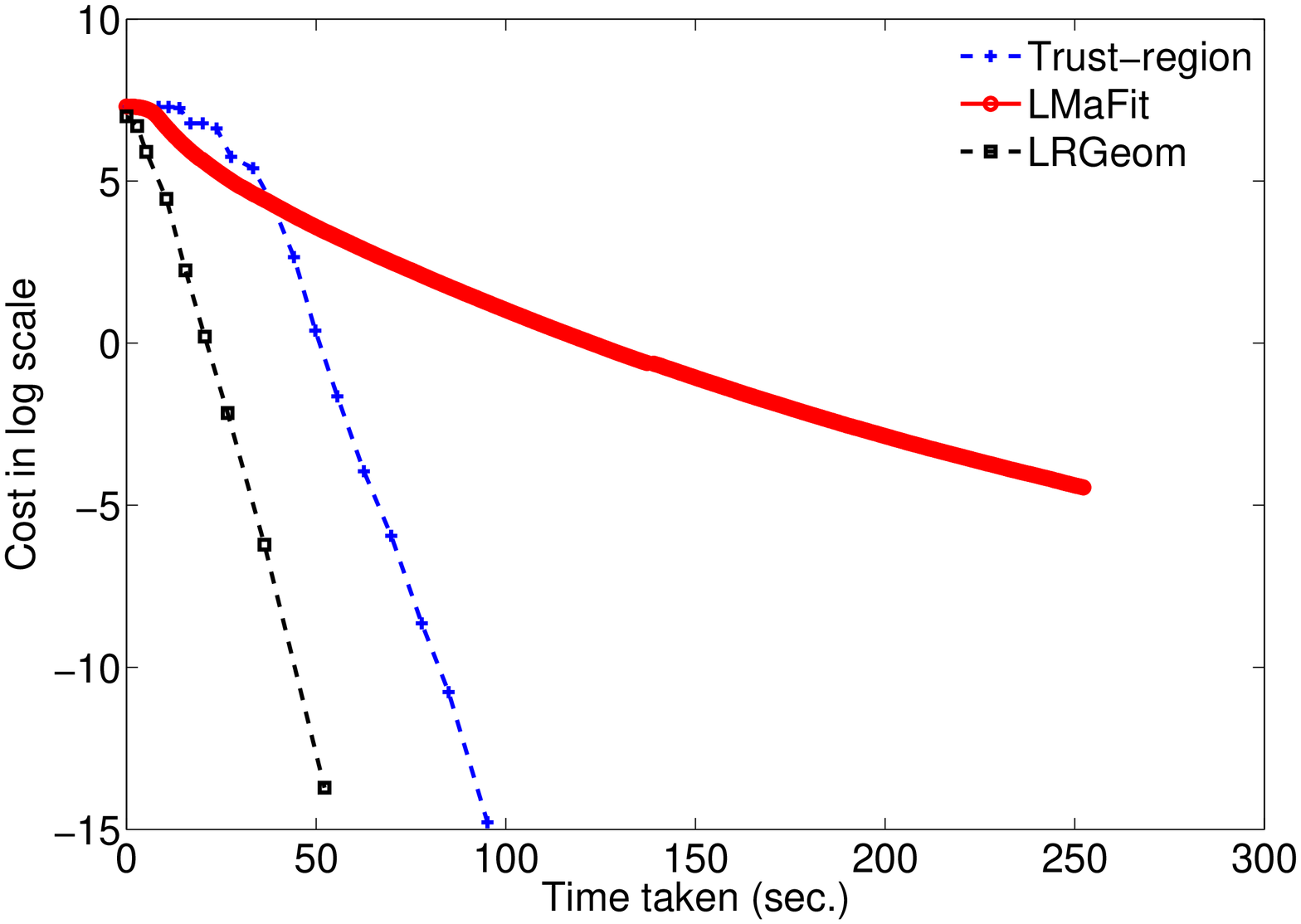}
\end{minipage}
\caption{Rank $5$ completion of $50000 \times 50000$ matrix with ${\rm OS} = 8$. All the algorithms are initialized by taking $5$ dominant SVD of sparse $\widetilde{\mat{X}}$ as proposed in \cite{keshavan10a}. Algorithms are stopped when the objective function  below a threshold, $\| \mat{W} \odot ( \widetilde{\mat{X}} - \mat{X}) \|_F ^2 \leq 10^{-10}$. The proposed trust-region scheme is competitive with LMaFit for large scale problems. Although LMaFit has a superior time complexity per iteration but its convergence seems to suffer for large-scale problems. With respect to LRGeom, the performance is poorer although both have a similar asymptotic rate of convergence.}
\label{fig:ubv_vs_lmafit}
\end{figure}
 
\subsubsection*{Comparison with APGL}
\change{APG has a better iteration complexity than other class optimization algorithms. However, scalability of APG by itself to large dimensional problems is an issue. The principal bottleneck is that the ranks of the intermediate iterates seem to be uncontrolled and only asymptotically, a low-rank solution is expected. To circumvent this issue, an accelerated version of APG called APGL is also proposed in \cite{toh10a}. APGL is APG with three additional heuristics: \emph{continuation} (a sequence of parameters leading to the target $\lambda$), \emph{truncation} (hard-thresholding of ranks by projecting onto fixed-rank matrices) and line-search technique for estimating the \emph{Lipschitz} constant. We compare our algorithm with APGL. The algorithms are stopped when either absolute variation or relative variation of the objective function falls below $10^{-10}$. For our algorithm, the trust-region algorithm is also terminated with the same criterion. In addition, the rank-one update is stopped when the relative duality gap is below $10^{-5}$.}

\change{To solve (\ref{eq:general_formulation}) for a fixed $\lambda  = \bar{\lambda}$ APGL proceeds through a sequence of values for $\lambda$ such that $\lambda_k = {\rm max} \{ 0.7\lambda_{k-1}, \bar{\lambda}\}$ where $k$ is the iteration count of the algorithm. Initial $\lambda_0$ is set to $2\| \mat{W} \odot \widetilde{\mat{X}}  \|_{op}$.} \change{We also follow a similar approach and create a sequence of values. A decreasing sequence is generated leading to $\bar{\lambda}$ is by using the recursive rule, $\lambda_{i} =\frac{\lambda_{j-1}}{ 2}$ when $ \lambda_{j-1} > 1$ and $\lambda_{i} =\frac{\lambda_{j-1}}{100}$ otherwise until $\lambda_{j-1} < \bar{\lambda}$.}
\change{Initial $\lambda_0$ is set to $\|\mat{W} \odot \widetilde{\mat{X}}  \|_{op}$. For $\lambda_j \not = \bar{\lambda}$ we also relax the stopping criterion for the trust-region to $10^{-5}$ as well as stopping the rank-one increment when relative duality gap is below $1$ as we are only interested for an accurate solution for $\lambda = \bar{\lambda}$.}
\begin{figure}[ht]
\subfigure {
\includegraphics[scale = 0.30]{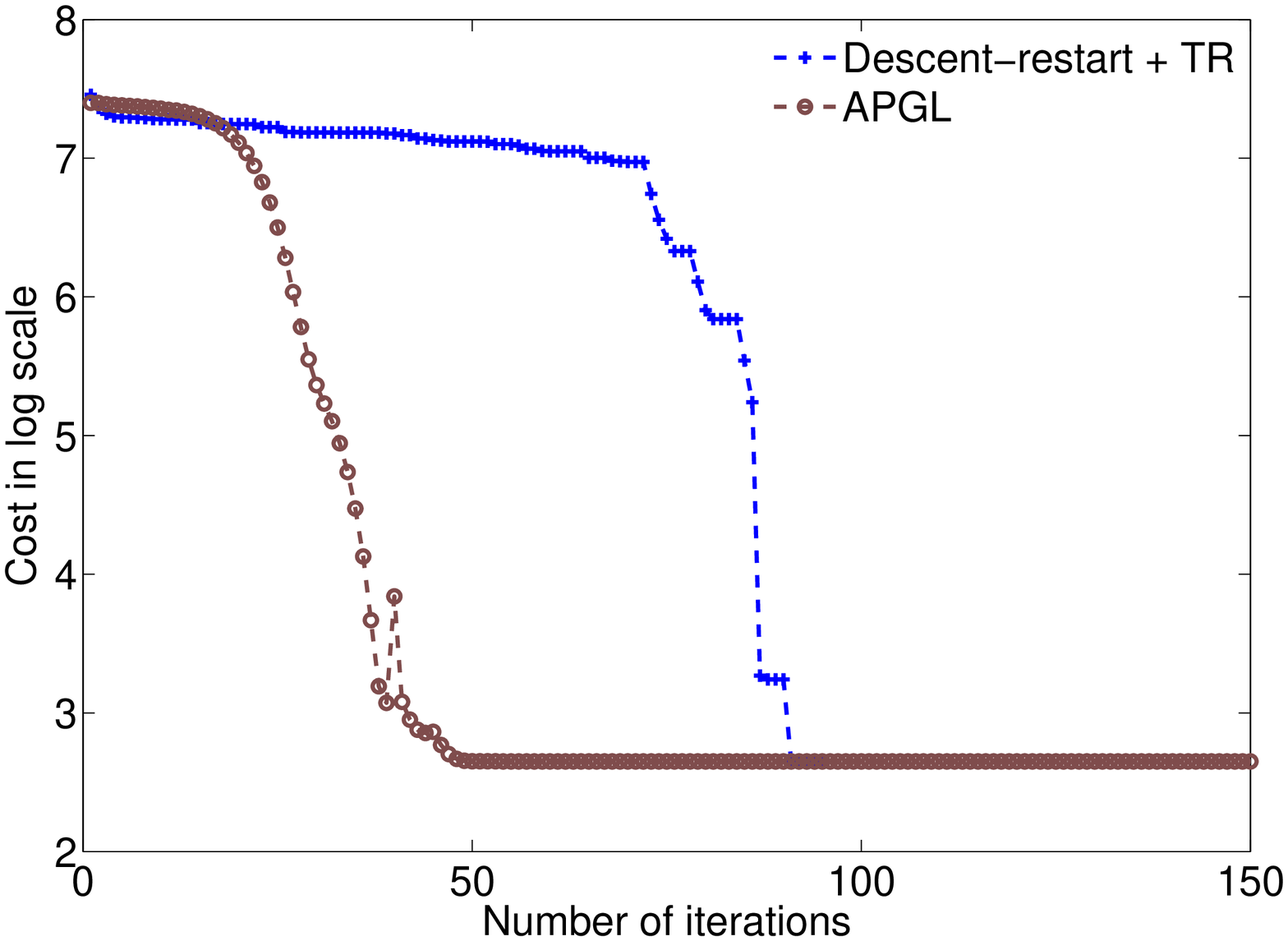}
}
\subfigure{
\includegraphics[scale = 0.30]{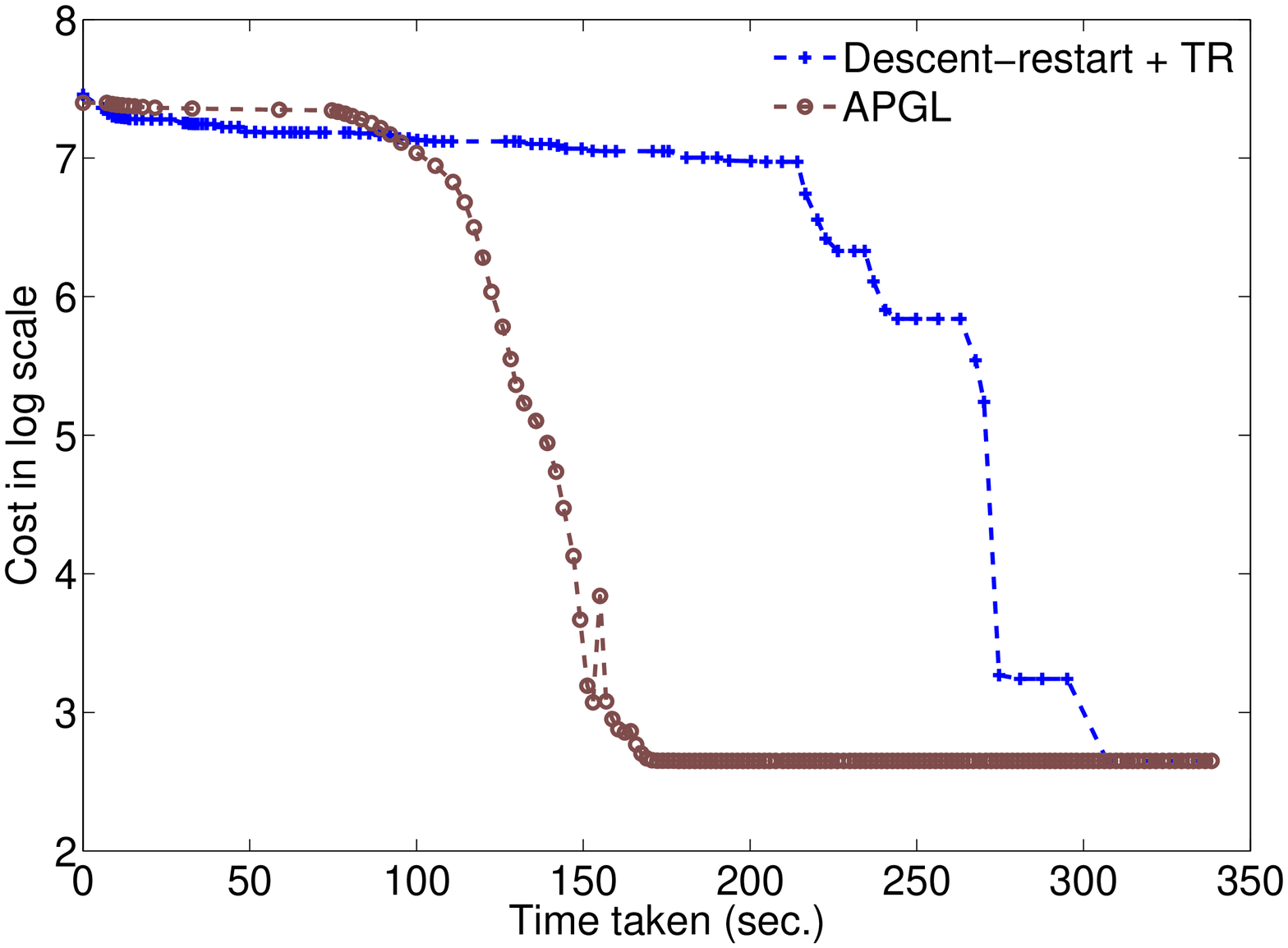}
}
\caption{A large-scale instance of rank $5$ completion of $50000 \times 50000$ matrix with ${\rm OS} = 8$. ${\lambda} = 2\|\mat{W} \odot \widetilde{\mat{X}}  \|_{op} / 10^{5}$ as suggested in \cite{toh10a}. The proposed framework is competitive for very low-ranks and when a high accuracy is sought. However, we spend considerable time in just traversing through ranks before arriving at the optimal rank.}
\label{fig:ubv_vs_apgl}
\end{figure}


\change{We compete favorably with APGL in large scale problems for very low-ranks and when a higher accuracy is required. However, as the rank increases, APGL performs better. This is not surprising as our algorithm traverses all ranks, one by one before arriving at the optimal rank. In the process we spend a considerable effort in \emph{just} traversing ranks. This approach is most effective only when computing in the full regularization path.  Also for moderate accuracy, APGL performs extremely well. However, the better performance of APGL significantly relies on heuristics like continuation and truncation. The truncation heuristic allows the APGL algorithm to approximate an iterate by low and fixed-rank iterate. On the other hand, we strictly move in the low-rank space. This provides an efficient way to compute the full regularization path using a predictor-corrector strategy.}

\subsection*{Comments on matrix completion algorithms}
We summarize our observations in the following points.
\begin{itemize}
\item{ The convergence rate of SOFT-IMPUTE is greatly dependent on the computation of singular values. For large scale problems this is a bottleneck and the performance is greatly affected. However, in our experiments, it performs quite well within a reasonable accuracy as seen in Figure \ref{fig:matrix_completion_cost_example} and Figure \ref{fig:matrix_completion_train_error_example}.}

\item{SVT, in general, performs quite well on random examples. The choice of the fixed step size and regularization parameter $\tau$, however, affect the convergence speed of the algorithm \cite{ma11a, mazumder10a}. }

\item{FPCA has a superior numerical complexity per iteration owing to an approximate singular value decomposition \cite{ma11a}. But the performance suffers as the regularization parameter $\lambda$ is increased as shown in Figure \ref{fig:matrix_completion_cost_example}.}

\item \change{APG has a better iteration complexity than the others and is well-suited when a moderate accuracy is required (Figure \ref{fig:matrix_completion_cost_example} and Figure \ref{fig:matrix_completion_train_error_example}). As the ranks of the intermediate iterates are not necessarily low, scalability to large dimension is an issue. The accelerated version APGL does not suffer from this problem and performs very well for large dimensions.}

\item \change{In all the simulation studies on random examples Descent-restart+TR has shown a favorable performance on different benchmarks. In particular our framework is well suited when the optimal solution is low-rank and when one needs to compute the regularization path. Moving strictly on the low-rank space makes it possible to go beyond the standard warm-restart approach to compute the regularization path.}
\end{itemize}

\subsection{Multivariate linear regression}
Given matrices $\mat{Y}\in \mathbb{R}^{n \times k}$ (response space) and $\mat{X} \in \mathbb{R}^{n\times q}$ (input data space), we seek to learn a weight/coefficient matrix $\mat{W} \in\mathbb{R}^{q \times k}$ that minimizes the \emph{loss} between $\mat{Y}$ and $\mat{XW}$ \cite{yuan07a}. Here $n$ is the number of observations, $q$ is the number of predictors and $k$ is the number of responses. One popular approach to multivariate linear regression problem is by minimizing a \emph{quadratic loss} function. Note that in various applications \emph{responses} are related and may therefore, be represented with much fewer coefficients. From an optimization point to view this corresponds to finding a low-rank coefficient matrix. The papers \cite{yuan07a, amit07a}, thus, motivate the use of trace norm regularization in the following optimization problem formulation, defined as, 
\[
\min\limits_{\mat{W} \in \mathbb{R}^{q \times k}} \quad \| \mat{Y} - \mat{XW}\|_F^2 + \lambda \|\mat{W} \|_*.
\]
(Optimization variable is $\mat{W}$.) Although the focus here is on the quadratic loss function, the framework can be applied to other smooth loss functions as well. Other than the difference in the dual variable $\mat{S}$ and $\mat{S}_*$, the rest of the computation of gradient and its directional derivative in the Euclidean space is similar to that of the low-rank matrix completion case.
$
\mat{S} = 2 (   \mat{X} ^T \mat{X} \mat{W} - \mat{X}^T \mat{Y}   ) \ \ \rm{and}\ \ 
\mat{S}_* = \D_{(\mat{U}, \mat{B}, \mat{V})} \mat{S} [\mat{Z}] =  2( \mat{X}^T \mat{X} (\mat{Z}_{\mat{U}}\mat{B}\mat{V}^T +  \mat{U} \mat{Z}_{\mat{B}} \mat{V}^T  + \mat{UB} \mat{Z}_{\mat{V}}^T   ))
$
where \change{the rank of $\mat{W}$ is $p$ and} $\mat{W} = \mat{UBV}^T$.

The numerical complexity per iteration is dominated by the numerical cost to compute $\bar{\phi}(\mat{U}, \mat{B}, \mat{V})$, $\mat{S}$ and terms like $\mat{SVB}$. The cost of computing $\bar{\phi}$ is of $O(nqp + nkp + kp^2 + nk)$ and $\mat{SVB}$ is $O(q^2p + qkp + kp^2 )$.  And that of full matrix $\mat{S} $ is $ O(q^2p + qkp + kp^2 )$. From a \emph{cubic} numerical complexity of $O(q^2k)$ per iteration (using the full matrix $\mat{W}$ ) the low-rank factorization reduces the numerical complexity to $O(q^2p + qkp)$ which is \emph{quadratic}. Note that the numerical complexity per iteration is linear in $n$.

\subsection*{Fenchel dual and duality gap for multivariate linear regression}
For the multivariate linear regression problem $\mathcal{A}(\mat{W}) = \mat{XW}$ and therefore, we can define $\psi$ such that $f(\mat{W}) = \psi (\mat{XW})$. So, $\mathcal{A}^*(\mat{\eta}) = \mat{X}^T \eta$. The dual candidate $\mat{M}$ is defined by $\mat{M} =  \min(1, \frac{\lambda }{\sigma_\psi}) \Grad \psi$ where $\Grad \psi ( \mat{X W}) = 2 (\mat{XW} - \mat{Y})$ and $\sigma_\psi$ is the dominant singular value of $\mathcal{A}^*(\Grad \psi) = \mat{X}^T \Grad \psi$. The Fenchel dual $\psi^*$ (after few more steps) can be computed as $\psi^*(\mat{M}) = \trace(\mat{M}^ T \mat{M})/4 + \trace(\mat{M}^T \mat{Y})$. \change{Finally, the duality gap is $f(\mat{W}) + \lambda \| \mat{W}\|_* + \psi^*(\mat{M})$}. As we use a low-rank factorization of $\mat{W}$, i.e., $\mat{W} = \mat{UBV}^T$ the numerical complexity of finding the duality gap is dominated by numerical cost of computing $\psi^*(\mat{M})$ which is also of the order of the cost of computing $\bar{\phi}(\mat{U}, \mat{B}, \mat{V})$. Numerical complexity of computing $\mat{M} $ is $ O(nqp + nkp + kp^2)$ and of $\psi^*(\mat{M})$ is $O(nk)$.

\subsubsection{Regularization path for multivariate linear regression}
An input data matrix $\mat{X}$ of size $5000 \times 120 $ is randomly generated according to a Gaussian distribution with zero mean and unit standard deviation. The response matrix $\mat{Y}$ is computed as $\mat{X}\mat{W}_*$ where $\mat{W}_*$ is a randomly generated coefficient matrix of rank $5$ matrix and size $120 \times 100$. We randomly split the observations as well as responses into \emph{training} and \emph{testing} datasets in the ratio $70/30$ resulting in $\mat{Y}_{\rm{train}}/\mat{Y}_{\rm{test}}$ and $\mat{X}_{\rm{train}}/\mat{X}_{\rm{test}}$. A Gaussian white noise of zero mean and variance $\sigma_{\rm{noise}}^2$ is added to the training response matrix $\mat{Y_{\rm{train}}}$ resulting in $\mat{Y}_{\rm{noise}}$. We learn the coefficient matrix $\mat{W}$ by minimizing the \emph{scaled} cost function, i.e.,
\[
\min\limits_{\mat{W} \in \mathbb{R}^{q \times k}} \quad \frac{1}{nk}\| \mat{Y}_{\rm{noise}} - \mat{X_{\rm{train}}W}\|_F^2 + \lambda \|\mat{W} \|_*,
\]
where $\lambda$ is a regularization parameter. We validate the learning by computing the root mean square error (rmse) defined as 
\[
{\rm Test\ rmse} = \sqrt{ \frac{1}{ n_{\rm{test}} k}   \| \mat{Y}_{\rm{test}} - \mat{X_{\rm{test}}W}\|_F^2  }
\]
where $n_{\rm{test}}$ is the number of test observations. Similarly, the signal to noise ratio (SNR) is defined as $\sqrt{\frac{\| \mat{Y}_{\rm{train}} \|_F^2}{\sigma_{\rm{noise}}^2}}$.

We compute the entire regularization path for four different SNR values. The maximum value of $\lambda$ is fixed at $10$ and the minimum value is set at $1\times 10^{-5}$ with the reduction factor $\gamma = 0.95$. Apart from this we also put the restriction that we only fit ranks less than $30$. The solution to an optimization problem for $\lambda_{j}$ is claimed to have been obtained when either the duality gap falls below $1\times 10^{-2}$ or the relative duality gap falls below $1\times 10^{-2}$ or $\sigma_1 - \lambda$ is less than $1\times 10^{-2}$. Similarly, the trust-region algorithm stops when relative or absolute variation of the cost function falls below $1\times10^{-10}$. The results are summarized in Figure \ref{fig:multivariate_regression}.

\begin{figure}[ht]
\begin{minipage}{0.5\textwidth}
\includegraphics[scale = 0.30]{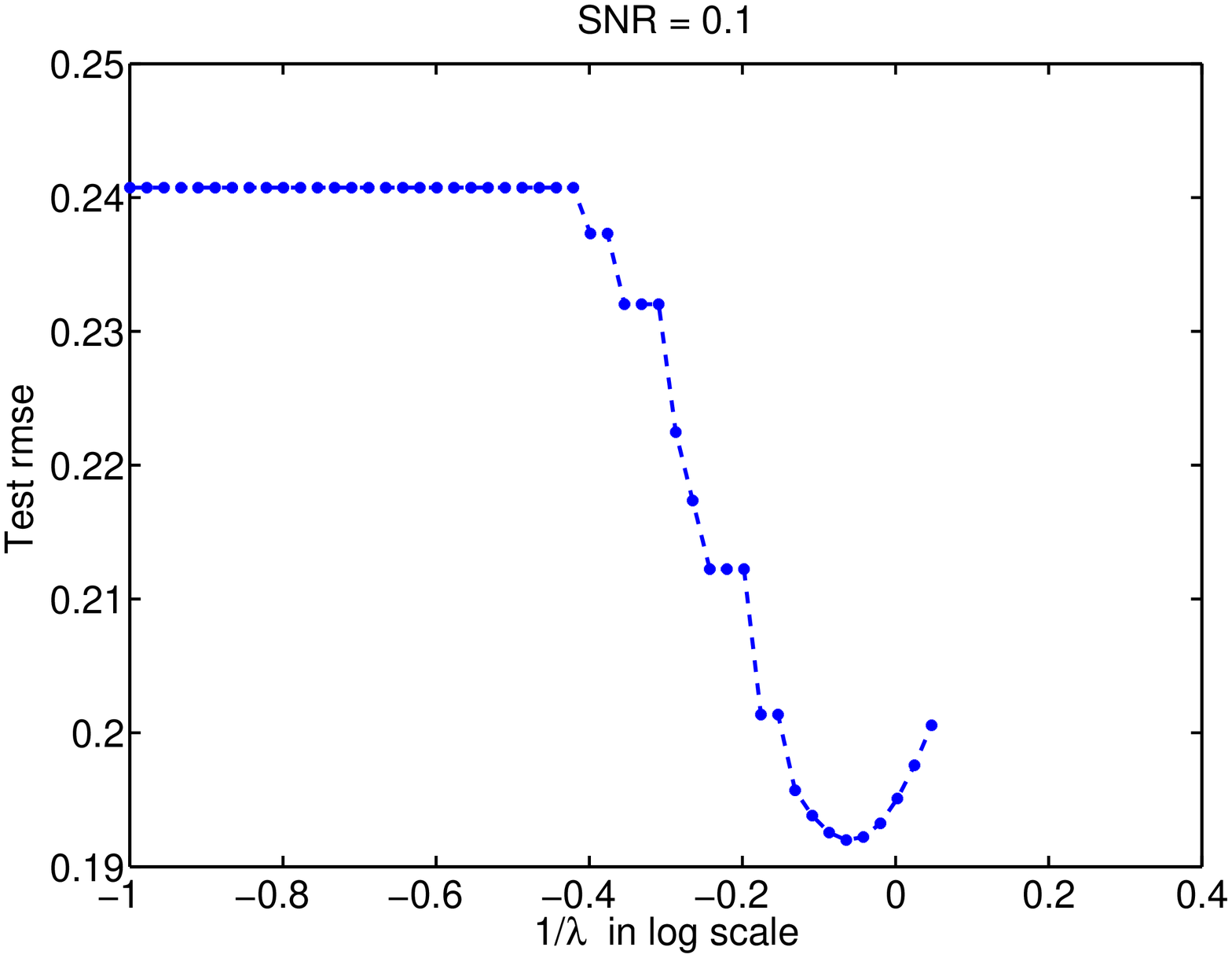}
\end{minipage}
\begin{minipage}{0.5\textwidth}
\includegraphics[scale = 0.30]{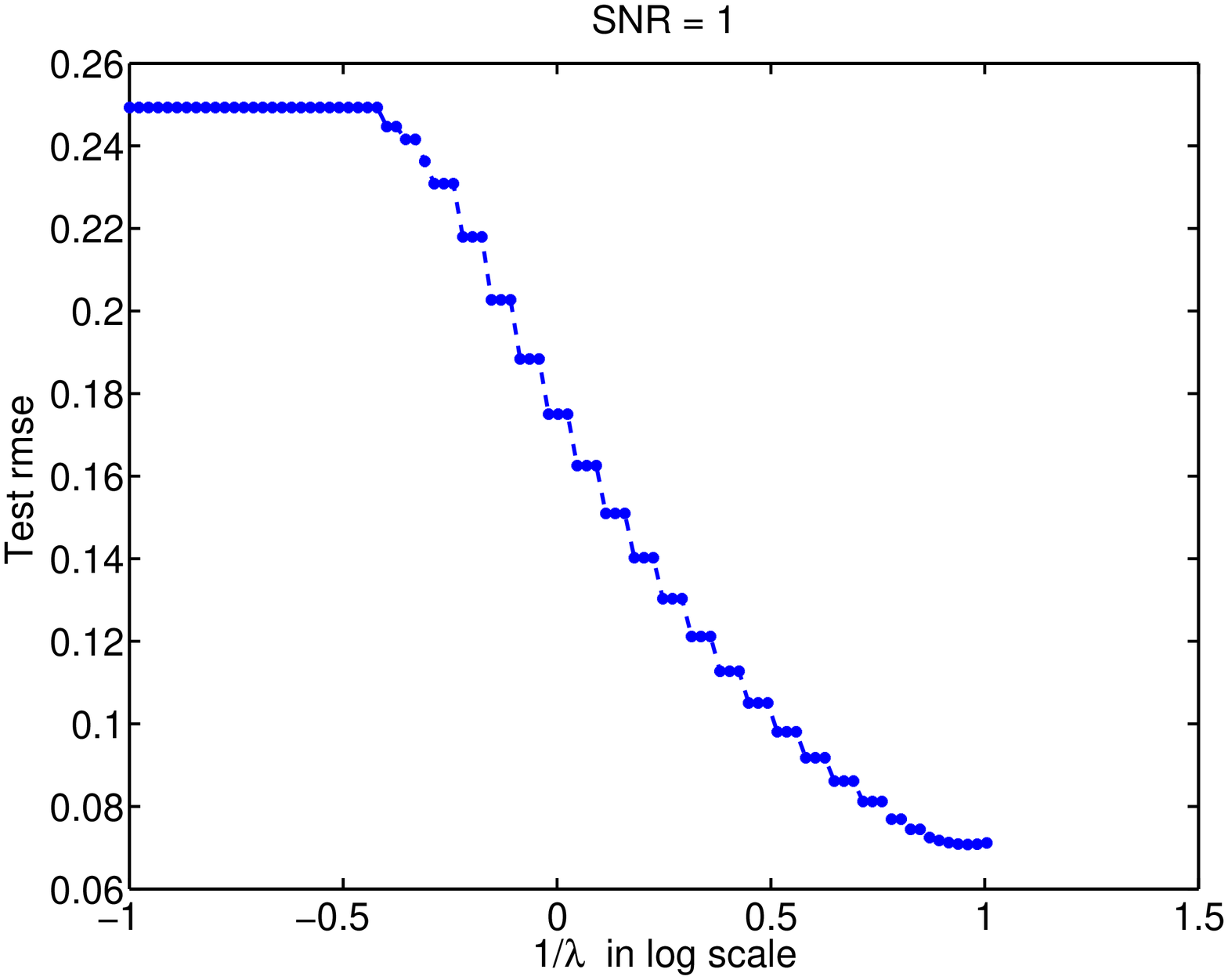}
\end{minipage}
\begin{minipage}{0.5\textwidth}
    \includegraphics[scale = 0.30]{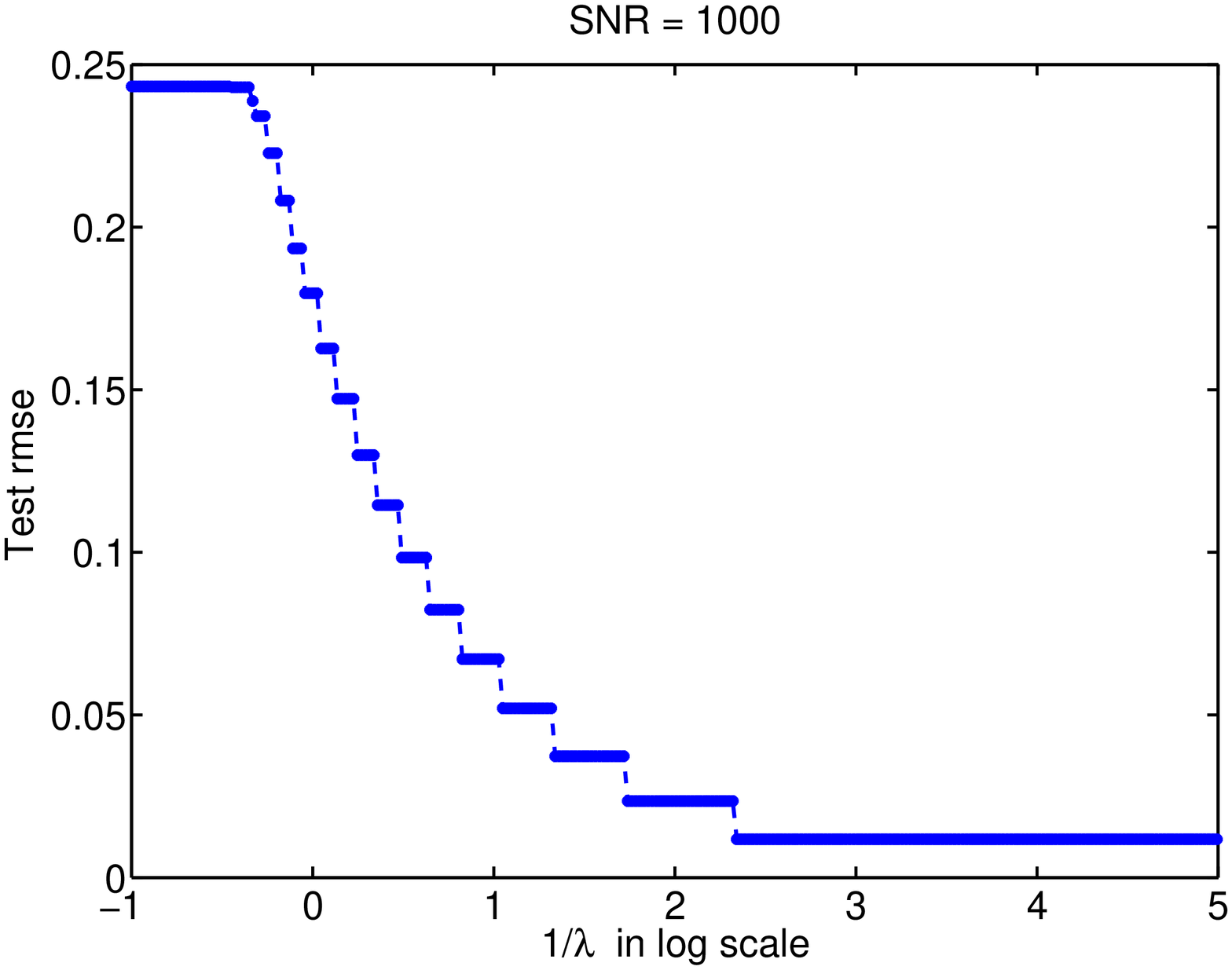}
\end{minipage}
\begin{minipage}{0.25\textwidth}
  \scriptsize
   \begin{tabular}{|c|c|c|c|c|c|} \hline   
      SNR     & Test rmse & Rank & $\lambda$      \\ \hline
      $0.1$ & $0.192$ & $15$ & $ 1.16$  \\ \hline 
       $1$ & $0.071$ & $22$ & $ 0.109$  \\ \hline 
      $1000$ & $0.012$ & $5$ & $ 0.005$  \\ \hline
       $1\times 10^8$ & $7.87\times 10^{-3}$ & $5$ & $ 0.002$  \\ \hline 
 
         \end{tabular}
\end{minipage}
\caption{ Regularization path for multivariate linear regression with various SNR values. Results are averaged over 5 random $70/30$ splits. }
\label{fig:multivariate_regression}
\end{figure}


\section{Conclusion}
Three main ideas have been presented in this paper. First, we have given a framework to solve a general trace norm minimization problem (\ref{eq:general_formulation}) with a sequence of increasing but fixed-rank non-convex problems (\ref{eq:factorized_formulation}). We have analyzed the convergence criterion and duality gap which are used to monitor convergence to a solution of the original problem. The duality gap expression was shown numerically tractable even for large problems thanks to the specific choice of the low-rank parameterization. We have also given a way of incrementing the rank while simultaneously ensuring a decrease of the cost function. This may be termed as a \emph{descent-restart} approach. The second contribution of the paper is to present a second-order trust-region algorithm for a general rank-$p$ (fixed-rank) optimization in the quotient search space $\Stiefel{p}{n}\times \PD{p}\times \Stiefel{p}{m}/\OG{p}$ equipped with the natural metric $\bar{g}$ (\ref{eq:metric}). The search space with the metric $\bar{g}$ has the structure of a Riemannian submersion \cite{absil08a}. We have used manifold-optimization techniques \cite{absil08a} to derive the required geometric objects in order to devise a second-order algorithm. With a proper parameter tuning the proposed trust-region algorithm guarantees a quadratic rate of convergence. The third contribution of the paper is to develop a predictor-corrector algorithm on the quotient manifold where the predictor step is along the first-order approximation of the geodesic. The corrector step is achieved by initializing the descent-restart approach from the predicted point. The resulting performance is superior to the warm-restart approach.

These ideas have been applied to the problems of low-rank matrix completion and multivariate linear regression leading to encouraging numerical results.

\appendix 

\section{Proofs}

\subsection{Derivation of first-order optimality conditions of (\ref{eq:foc_factorized_formulation})} \label{appendix:foc}
We derive the gradient $\grad_{\bar x}\bar{ \phi}$ in the total space $\overline{\mathcal{M}}_p$ with the metric (\ref{eq:metric}) using (\ref{eq:riemannian_gradient}) at point ${\bar x} = (\mat{U}, \mat{B}, \mat{V})$. First, we compute $\Grad_{\bar x}$ of $\bar{\phi}$ in the Euclidean space $\mathbb{R}^{n \times r} \times \mathbb{R}^{r \times r} \times \mathbb{R}^{m \times r}$. The matrix representation of $\Grad_{\bar x}$ is $
(\Grad_{\mat{U}} \bar{\phi}, \Grad_{\mat{B}} \bar{\phi}, \Grad_{\mat{V}} ) = (\mat{SVB}, \mat{U}^T \mat{SV} + \lambda \mat{I}, \mat{S}^T \mat{UB} )
$ which leads to the expression
\begin{equation*}
\begin{array}{llll}
\grad_{\mat U} {\bar \phi} &= & (\mat{S}\mat{VB} - \mat{U}\Sym(\mat{U}^T \mat{SVB}), \
\grad_{\mat B}{\bar \phi}  =   \mat{B}\left( \Sym(\mat{U}^T \mat{SV}) + \lambda  \mat{I} \right) \mat{B} \\
\grad_{\mat V}{\bar \phi} & = & \mat{S}^T \mat{UB} -\mat{V} \Sym (\mat{V}^T \mat{S}^T \mat{UB} )
\end{array}
\end{equation*}
The conditions (\ref{eq:foc_factorized_formulation}) are obtained by $\|\grad_{\bar x} \bar{\phi}\|_{g_{\bar x}} =0$.

\subsection{Proof of Proposition (\ref{prop:convergence})}\label{appendix:foc_convex}
From the characterization of sub-differential of trace norm \cite{recht10a} we have the following
\begin{equation}\label{eq:subdifferential_trace_norm}
\begin{array}{lll}
\partial \|\mat{X} \|_* =  \{   \mat{UV}^T + \mat{W}\  | \ &  \mat{W}  {\rm \ and\ } \mat{X} {\rm \ have \ orthogonal \ column \ and\ row\ spaces,}\\  
 & \mat{W} \in \mathbb{R}^{n \times m} \ {\rm and} \ \|\mat{W} \|_{op} \leq 1  \}  
\end{array}
\end{equation}
where $\mat{X} = \mat{UBV}^T$. Since $\mat{X} = \mat{UBV}^T$ is also a stationary point for the problem (\ref{eq:factorized_formulation}), the conditions (\ref{eq:foc_factorized_formulation}) are satisfied including $\Sym(\mat{U}^T \mat{SV}) + \lambda  \mat{I} = \mat{0}$. From the properties   of a matrix norm we have
\[
\begin{array}{llll}
&\lambda  \mat{I} &=& -\Sym(\mat{U}^T \mat{SV}) \\
\Rightarrow & \lambda & = & \| \Sym(\mat{U}^T \mat{SV})   \|_{op} \leq  \| \mat{U}^T \mat{SV}   \|_{op} \leq \mat{S}   \|_{op}. \\
\end{array}
\]
Equality holds if and only if when $\mat{U}$ and $\mat{V}$ correspond to the dominant row and column subspace of $\mat{S}$, i.e., if $\mat{S} = -\lambda \mat{UV}^T + \mat{U_{\perp}\Sigma V_{\perp}}^T$ where $\mat{U}^T \mat{U_{\perp}} = \mat{0}$, $\mat{V}^T \mat{V_{\perp}} = \mat{0}$, $\mat{U_{\perp}}\in \Stiefel{n-p}{n}$, $\mat{V_{\perp}}\in \Stiefel{m-p}{m}$ and $\mat{\Sigma}$ is diagonal matrix with positive entries with $\| \mat{\Sigma}\|_{op} \leq \lambda$. Note that this also means that  $\mat{S} \in \lambda \partial \|\mat{X} \|_*$ such that $\mat{W} = \mat{U_{\perp}\Sigma V_{\perp}}^T$ which satisfies (\ref{eq:subdifferential_trace_norm}) and the global optimality condition (\ref{eq:foc_general_formulation}). This proves Proposition (\ref{prop:convergence}).

\subsection{Proof of Proposition (\ref{prop:descent_directions})}\label{appendix:descent_directions}
Since $\mat{X} = \mat{UBV}^T$ is a stationary point for the problem (\ref{eq:factorized_formulation}) and not the global optimum of (\ref{eq:general_formulation}) by virtue of Proposition \ref{prop:convergence} we have $\|\mat{S} \|_{op} > \lambda$ (strict inequality).  We assume that $f$ is smooth and hence, let the first derivative of $f$ is Lipschitz continuous with the Lipschitz constant $L_{f}$, i.e., $\| \nabla f_{\mat X}(\mat{X}) -  \nabla f_{\mat Y}(\mat{Y}) \|_F \leq L_f \| \mat{X} - \mat{Y} \|_F$\ where $\mat{X}, \mat{Y}\in \mathbb{R}^{n \times m}$ \cite{nesterov03a}. Therefore, the update (\ref{eq:update}), $\mat{X}_{\rm +} = \mat{X} - \beta u v^T$ would result in following inequalities
\begin{equation}\label{eq:decrease_f}
\begin{array}{llll}
&f(\mat{X}_{\rm +})  &\leq& f(\mat{X}) + \langle \nabla_{\mat{X}} f (\mat{X}), \mat{X}_{\rm +} - \mat{X}  \rangle  + \frac{L_f}{2} \| \mat{X}_{\rm +} - \mat{X} \|_{F}^2 \ {\rm ( from \ \cite{nesterov03a} )} \\
&  & = & f(\mat{X}) - \beta \sigma_1 + \frac{L_f}{2}\beta^2 \\
&{\rm also} & & \\
&\|\mat{X}_{\rm +} \|_*  &\leq &  \| \mat{X} \|_* +\beta  \ {\rm ( from \ triangle\ inequality \ of \ matrix \ norm )}\\
\Rightarrow & f(\mat{X}_{\rm +}) + \lambda\|\mat{X}_{\rm +} \|_* & \leq & f(\mat{X}) + \lambda\|\mat{X} \|_* - \beta ( \sigma_1 - \lambda -  \frac{L_f}{2}\beta)
\end{array}
\end{equation}
for $\beta >0$ and $\sigma_1$ being $\| \mat{S} \|_{op}$. The maximum decrease in the cost function is obtained by maximizing $\beta ( \sigma_1 - \lambda -  \frac{L_f}{2}\beta)$ with respect to $\beta$ which gives $\beta_{\rm max} = 	\frac{\sigma_1 - \lambda}{L_f} > 0$. $\beta_{\rm max} = 0$ only at optimality. This proves the proposition.

\subsection{Proof of Proposition (\ref{prop:dual_formulation})}\label{appendix:dual_formulation}
Without loss of generality we introduce a dummy variable $\mat{Z} \in \mathbb{R}^{n \times m}$ to rephrase the optimization problem (\ref{eq:general_formulation}) as
\[
\begin{array}{llll}
\min\limits_{\mat{X}, \mat{Z}} & f(\mat{X}) + \lambda \| \mat{Z} \|_*\\
 \subject & \mat{Z} = \mat{X}. \\ 
\end{array}
\] 
The Lagrangian of the problem with dual variable $\mat{M} \in \mathbb{R}^{n \times m}$ is
$\mathcal{L}(\mat{X}, \mat{Z}, \mat{M}) = f(\mat{X}) + \lambda \| \mat{Z} \|_* + \trace(\mat{M}^T (\mat{Z} - \mat{X}))$. The \emph{Lagrangian dual} function $g$ of the Lagrangian $\mathcal{L}$ is, then, computed as \cite{boyd04a, bach11a}
\[
\begin{array}{lllll}
&g(\mat{M})  &= &\min\limits_{\mat{X}, \mat{Z}} \quad f(\mat{X})  - \trace(\mat{M}^T \mat{X}) + \trace(\mat{M}^T \mat{Z}) + \lambda \| \mat{Z}\|_*\\
\Rightarrow &g(\mat{M})  &= &\min\limits_{\mat{X}}  \quad  \{ f(\mat{X})  - \trace(\mat{M}^T \mat{X})  \} + \min\limits_{\mat{Z}} \quad  \{ \trace(\mat{M}^T \mat{Z}) + \lambda \| \mat{Z}\|_* \}
\end{array}
\]
Using the concept of dual norm of trace norm, i.e., operator norm we have 
\[
\min\limits_{\mat{Z}} \quad  \trace(\mat{M}^T \mat{Z}) + \lambda \| \mat{Z}\|_*  = 0 \quad {\rm if} \quad \| \mat{M} \|_{op} \leq \lambda
\] 
Similarly, using the concept of Fenchel conjugate of a function we have 
\[
\min\limits_{\mat{X}}  \quad f(\mat{X})  - \trace(\mat{M}^T \mat{X})  = - f^*(\mat{M})
\]
where $f^*$ is the Fenchel conjugate \cite{bach11a, boyd04a} of $f$, defined as $f^*(\mat{M}) =\\ {\rm sup}_{\mat{X} \in \mathbb{R}^{n \times m}} \left [ \trace(\mat{M}^T \mat{X})  - f(\mat{X}) \right ]$. Therefore when $\| \mat{M} \|_{op} \leq \lambda$, the final expression for the dual function is $g(\mat{M}) = - f^*(\mat{M})$\cite{bach11a} and the Lagrangian dual formulation boils down to
\[
\begin{array}{lll}
{\rm max}_{\mat M}& - f^*(\mat{M})\\
 \subject &  \| \mat{M} \|_{op} \leq \lambda.
\end{array}
\]
This proves the proposition.
\bibliographystyle{amsalpha}
\bibliography{arXiv_MMBS_trace_norm}

\end{document}